\title{Thurston's Bounded image theorem}
\author{Cyril Lecuire and Ken'ichi Ohshika}
\address{Laboratoire Emile Picard,
Universit\'{e} Paul Sabatier,
118 Route de Narbonne,
31062 Toulouse Cedex 4, France, and
Department of Mathematics, Faculty of Science, Gakushuin University, Toshima-ku, Tokyo 171-8588, Japan}
\newcommand{\complexes}{\mathbb{C}}
\newcommand{\reals}{\mathbb{R}}
\newcommand{\integers}{\mathbb{Z}}
\newcommand{\naturals}{\mathbb{N}}
\newcommand{\hyperbolic}{\mathbb{H}}
\newcommand{\hthree}{\hyperbolic^3}
\newcommand{\PSL}{{\rm PSL}_2(\mathbb{C})}
\newcommand{\ie}{i.e.\ }
\newcommand{\Fr}{\operatorname{Fr}}
\newcommand{\Int}{\operatorname{Int}}
\newcommand{\len}{\mathrm{length}}
\newcommand{\cc}{\mathcal{CC}}
\newcommand{\base}{\mathbf{base}}
\newcommand{\AH}{\mathsf{AH}}
\newcommand{\QF}{\mathsf{QF}}
\newcommand{\QH}{\mathsf{QH}}
\newcommand{\teich}{\mathcal{T}}
\let \pslc \PSL
\newcommand{\eps}{\epsilon}
\newtheorem{theorem}{Theorem}[section]
\newtheorem{proposition}[theorem]{Proposition}
\newtheorem{lemma}[theorem]{Lemma}
\newtheorem{corollary}[theorem]{Corollary}
\newtheorem{claim}[theorem]{Claim}
\theoremstyle{definition}
\newtheorem{definition}[theorem]{Definition}
\newtheorem{setting}[theorem]{Setting}
\begin{document}
\sloppy
\maketitle
\begin{abstract}
Thurston's bounded image theorem is one of the key steps in his proof of the uniformisation theorem  for Haken manifolds.
Thurston never published its proof, and no proof has been known up to today, although a proof of  its weaker version, called the bounded orbit theorem is known.
In this paper, we give a proof of the original bounded image theorem, relying on recent development of Kleinian group theory.
\end{abstract}

\tableofcontents

\section{Introduction}
\label{intro}
From the late 1970s to the early 1980s, Thurston gave lectures on his uniformisation theorem for Haken manifolds (\cite{ThL, Th1}).
The theorem states that every atoroidal Haken 3-manifold with its (possibly empty) boundary consisting only of incompressible tori admits a complete hyperbolic metric in its interior.
His proof of this theorem is based on an induction making use of  a hierarchy for Haken manifolds invented by Waldhausen \cite{Wa}, i.e., a system of incompressible surfaces cutting the manifold down to balls, together with Maskit's combination theorem (see for instance \cite[\S VII]{Mas}).

For simplicity, we now focus on the case of closed atoroidal Haken manifolds.
In the last step of the induction, we are in the situation where $N$ is a closed atoroidal Haken manifold obtained from a 3-manifold $M$ with non-empty boundary (without torus components) by pasting  $\partial M$ to itself by an orientation reversing involution.
The induction hypothesis guarantees the existence of a convex cocompact hyperbolic structure on $M$.
There, Thurston used the so-called bounded image theorem to find a convex compact hyperbolic structure on $M$, obtained by quasi-conformally deforming the given hyperbolic structure,  which can be pasted up along $\partial M$ to give a hyperbolic structure on  $N$.

Let us explain the setting in more detail.
Let $M$ be an atoroidal Haken manifold with an even number of boundary components all of which are incompressible.
In the same way as we assumed that $N$ is closed, we assume that no boundary component of $M$ is a torus, for simplicity.
Suppose that there is an orientation-reversing involution $\iota: \partial M \to \partial M$ taking each component of $\partial M$ to another one.
Let $N$ be the closed manifold obtained from $M$ by identifying the points on $\partial M$ with their images under $\iota$.
Suppose moreover that $N$ is also atoroidal.

We assume, as the hypothesis of induction, that $M$ admits a convex compact hyperbolic structure; in other words, that the interior of $M$ is homeomorphic to $\hyperbolic^3/\Gamma$ for a convex cocompact Kleinian group $\Gamma$. 
The space of convex compact hyperbolic structures on $M$, which is not empty by assumption, modulo isotopy is parameterised by $\teich(\partial M)$, as can be seen in the works of Ahlfors, Bers, Kra, Maskit, Marden and Sullivan.
From each convex compact hyperbolic structure on $M$, by taking the covering of $M$ associated with each component $S$ of $\partial M$, we get a quasi-Fuchsian group isomorphic to $\pi_1(S)$, and by considering the second coordinate of the parameterisation $\teich(S) \times \teich(\bar S)$ of the quasi-Fuchsian space, we obtain a map from $\teich(\partial M)$ to $\teich(\bar S)$, where $\teich(\bar S)$ denotes the Teichm\"{u}ller space of $S$ with orientation reversed.
By considering this for every component of $\partial M$, we get a map $\sigma: \teich(\partial M) \to \teich(\bar\partial M)$  called the skinning map, where $\teich(\bar \partial M)$ denotes the product of $\teich(\bar S)$ for the components $S$ of $\partial M$.
Since $\iota$ is orientation-reversing, it induces a homeomorphism $\iota_* \colon \teich(\bar\partial M) \to \teich(\partial M)$.

Then the bounded image theorem can be stated as follows.
\begin{theorem}
\label{main}
Suppose that $M$ is a compact (orientable) atoroidal Haken manifold having an even number of  boundary components all of which are incompressible and none of which are tori, and assume that $M$ is not homeomorphic to an $I$-bundle over a closed surface.
Assume moreover that $M$ admits a
convex compact hyperbolic structure.
Suppose that there is an orientation reversing involution $\iota: \partial M \to \partial M$ taking each component of $\partial M$ to another component, and that by pasting each component of $\partial M$ to its image under $\iota$, we get a closed atoroidal manifold $N$.
Then there exists $n \in \naturals$ depending only on the topological type of $M$ such that the image of $(\iota_* \circ \sigma)^n$ is bounded (\ie precompact) in $\teich(\partial M)$.
\end{theorem}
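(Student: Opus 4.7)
The proof goes by contradiction. Suppose the image of $(\iota_* \circ \sigma)^n$ is unbounded for every $n \in \naturals$; by a diagonal extraction we obtain sequences $X_n \in \teich(\partial M)$ with $Y_n := (\iota_* \circ \sigma)^n(X_n)$ leaving every compact subset of $\teich(\partial M)$. Each intermediate iterate $Z_n^{(k)} := (\iota_* \circ \sigma)^k(X_n)$, for $0 \le k \le n$, parameterises a convex compact hyperbolic structure $m_n^{(k)}$ on $M$ via the Ahlfors--Bers theorem, with associated representation $\rho_n^{(k)} \colon \pi_1(M) \to \PSL$. Along suitable subsequences we extract limits in Thurston's compactification, obtaining projective measured laminations $[\mu_k]$ on $\partial M$.

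The first main step is to take algebraic and geometric limits of the representations $\rho_n^{(k)}$ by a diagonal argument in both indices. Since the boundary conformal structures degenerate, these limits need not be convex cocompact; by the tameness theorem (Agol, Calegari--Gabai) and Thurston--Canary's covering theorem, however, the limit manifolds are geometrically tame and carry well-defined ending invariants in the sense of Minsky. The divergence of $Y_n$ in $\teich(\partial M)$ forces the appearance of non-trivial ending laminations on at least some components of $\partial M$, and iteration of $\iota_* \circ \sigma$ shuffles these laminations cyclically around the gluing pattern prescribed by $\iota$.

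Applying the Ending Lamination Theorem of Minsky and Brock--Canary--Minsky, each limiting hyperbolic structure is identified by its ending invariants. After further extraction this yields a coherent system of laminations on $\partial M$ that is invariant under $\iota_* \circ \sigma$ up to a finite cyclic shift. The final step is topological: from this invariant lamination system one manufactures either a system of essential annuli in $M$ which glue up under $\iota$ to an essential torus in $N$, contradicting atoroidality, or an essential subsurface structure exhibiting $M$ as an $I$-bundle over a closed surface, contradicting our hypothesis.

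\textbf{Expected main obstacle.} The hardest step is the extraction of a coherent, $\iota$-compatible system of laminations from the doubly-indexed family of limits. Tracking how ending laminations propagate through successive applications of $\iota_* \circ \sigma$ requires fine combinatorial information (subsurface projections in the curve complex, bounded geometry estimates in the model manifold) together with a quantitative form of the Ending Lamination Theorem. Converting the resulting limiting laminations back into a genuine essential surface in $N$ is itself a delicate topological argument that uses atoroidality of $N$ and the exclusion of the $I$-bundle case in an essential way.
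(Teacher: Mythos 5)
Your overall strategy (contradiction, degenerating structures, limits, an essential torus in $N$) points in the right direction, but two essential ingredients are missing, and the route you sketch through them would not go through as stated. First, the integer $n$ must be produced \emph{before} any limiting argument, from the topology of $M$ alone. The paper does this by studying surfaces in $\partial M$ that are ``$n$-time vertically extendible'' through the characteristic submanifold of $M$ when the iterates are tracked inside $N$, and shows (\cref{non-extendible}) that an $L$-time extendible surface with $L=2(3g-3)^2$ already forces an essential torus in $N$; the theorem is then proved for the single exponent $n=L+1$. Your diagonal extraction over all $n$ simultaneously obscures this: you never explain how a topologically determined $n$ emerges, and a doubly-indexed family of limits with varying iteration depth gives you no uniform structure to contradict.

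Second, and more seriously, the heart of the proof is a \emph{propagation} statement, not a classification statement: if the skinning image degenerates along a curve $d$ (in the precise sense that the invariant $m(\sigma(m_i),d,\mu)$, built from subsurface projections and the length of $d$, is unbounded), then there is a properly embedded essential annulus in $M$ with boundary $d\cup d'$ such that $m(m_i,d',\mu)$ is also unbounded (\cref{unbounded has a root}). Iterating this single step $L+1$ times produces a finite chain of annuli gluing up in $N$, i.e.\ an $L$-time vertically extendible surface. Your proposal replaces this with an appeal to tameness and the Ending Lamination Theorem to extract a ``coherent system of laminations invariant under $\iota_*\circ\sigma$ up to cyclic shift,'' but no such invariant system is ever constructed or needed, and it is not clear why one would exist; moreover the ELT identifies limits by their invariants but does not by itself tell you that degeneration of the outward conformal structure at $d$ is caused by degeneration of the inward structure at a curve cobounding an essential annulus with $d$. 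That step requires the specific machinery the paper assembles: re-marking by Dehn twists so that $m$ either stays bounded or blows up, relative convergence criteria on submanifolds cut along short annuli, embeddings of partial compact cores in the geometric limit, and the analysis of which side of a rank-two cusp the core surface lies on (which is what distinguishes $m(m_i,d_j,\mu)$ bounded from $m(\sigma(m_i),d_j,\mu)$ bounded). Finally, your last step (``manufacture either an essential torus or an $I$-bundle structure from the lamination system'') is asserted without a mechanism; in the paper the annuli are genuine embedded annuli from the start, obtained because curves with unbounded $m$ are necessarily disjoint, and the torus appears by a counting argument on boundary components of the maximal extendible surface.
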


There are several expository papers and books on Thurston's uniformisation theorem  (\cite{Mo, Ot, Kap} among others).
In all of them, a weaker version of the bounded image theorem called the bounded orbit theorem, which is sufficient for the proof of the uniformisation theorem, was proved and used, instead of this original one.

Up to now, no complete proof of the bounded image theorem as stated above was known.
Kent  \cite{Kent} gave a proof of this theorem under the assumption that $M$ is acylindrical, in which case the deformation space of hyperbolic structures on $M$ is compact.

The purpose of this paper is to give a proof of the original bounded image theorem.
Our argument relies on recent progress in Klenian group theory, in particular, the embedding of partial cores in the geometric limit from \cite{BBCM}, the relation between the presence of short curves and their relative positions and the behaviour of ends invariant from \cite{BBCM-gt}, and criteria of convergence/divergence given in \cite{BBCL}.\\

\begin{generalization}
We can generalise the previous statement by considering the case when $M$ may have torus boundary components and the domain of the involution $\iota$ is a subsurface of $\partial M$. 
Let $\partial_0 M$ be the union of an even number of connected components with negative Euler characteristics of $\partial M$ and let $\iota: \partial_0 M \to \partial_0 M$ be an orientation reversing involution which maps each component of $\partial_0 M$ to another component. 
We assume $\iota$ to be the identify on $\partial M \setminus \partial_0 M$.
Let $\partial_T M$ be the union of torus components of $\partial M$, and $\partial_1 M$ the union of the components of $\partial M \setminus \partial_0 M$ which are not tori.

Now we consider an analogue of the skinning map in this setting.
For each component $S$ of $\partial_1M$, let $\gamma^S$ be the union of simple closed curves homotopic in $M$ into $\partial_TM$.
By Jaco-Shalen-Johannson theory, we see that $\gamma^S$ consists of disjoint simple closed curves.
We let $\gamma^M$ be the union of the $\gamma^S$ for all components $S$ of $\partial M \setminus \partial_TM$.
To consider the last step of the induction in the proof of the uniformisation theorem, we need to consider geometrically finite hyperbolic structures on $M$ in which both each component of $\gamma^M$ and each component of $\iota(\gamma^M)$ are parabolic.

Let $P$ be the union of disjoint annular neighbourhoods of $\gamma^M$ and $\partial_T M$, and set $Q=\iota(P)$.
Let $QH(M, Q)$ be the space of geometrically finite hyperbolic structures on $M$ modulo isotopy whose parabolic locus is precisely $Q$.
The theory of Ahlfors, Bers, Kra, Maskit, Marden and Sullivan gives a parametrisation $q \colon \teich(\partial M \setminus Q) \to QH(M,Q)$.
Let $\teich(\bar \partial M \setminus P)$ be the Teichm\"{u}ller space of $\partial M \setminus P$ with orientation reversed.
For each component $S$ of $\partial M$, by taking a covering associated to $\pi_1(S)$, and considering the parametrisation of the quasi-conformal deformation space  of the obtained geometrically finite Kleinian surface group, we have a map $\sigma^S \colon QH(M, Q) \to \teich(S \setminus Q)\times \teich(\overline{S \setminus P})$, where the latter denotes the Teichm\"{u}ller space of $S \setminus P$ with orientation reversed.

For each fixed $h \in \teich(\partial_1 M \setminus Q)$,
we define the restricted skinning map $\sigma_h \colon \teich(\partial_0 M \setminus Q) \to \teich(\overline{\partial_0 M \setminus P})$ by taking $g \in \teich(\partial_0 M \setminus Q)$ to the structure corresponding to $q(g,h) \in QH(M,Q)$ and taking the covering associated with $S$ for each component $S$ of $\partial_0 M$ to obtain a point in $\teich(\overline{S \setminus P})$ as above.


\begin{theorem}
\label{more general}
Suppose that $M$ is an (orientable) atoroidal Haken manifold which is not homeomorphic to an $I$-bundle over a closed surface nor a solid torus. 
We let the subsurface $\partial_0 M$ of $\partial M$, the parabolic locus $Q$, the involution $\iota\colon \partial M \to \partial M$ and the skinning map $\sigma_h$ be as defined above.
Then there exists $n \in \naturals$ depending only on $\chi(\partial_0 M)$ such that for any $h\in\teich (\partial_1 M \setminus Q)$ the image of $(\iota_* \circ \sigma_h)^n$ is bounded (\ie precompact) in $\teich(\partial_0 M \setminus Q)$.
\end{theorem}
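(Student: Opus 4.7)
The plan is to argue by contradiction along the lines of our proof of Theorem \ref{main}, incorporating the extra flexibility needed to treat the parabolic locus $Q$ and the fixed boundary data $h$ on $\partial_1 M$. Assume that no such $n$ works. Then for each $n$ one can choose a sequence $(g_i^{(n)})_i$ in $\teich(\partial_0 M \setminus Q)$ whose image under $(\iota_* \circ \sigma_h)^n$ leaves every compact set, and we work towards a contradiction.

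First, I would lift the iterates to hyperbolic structures in $\QH(M,Q)$ via the parametrisation $q$ and apply the convergence/divergence dichotomy of \cite{BBCL}: either the sequence converges algebraically up to conjugation---which would contradict the asserted divergence of the skinning image---or it diverges, in which case the partial core embedding of \cite{BBCM} provides a geometric limit relative to the cusps of $Q$, into which large compact submanifolds of the covers of $M$ corresponding to the components of $\partial_0 M$ embed. The analysis of ending invariants of \cite{BBCM-gt} then identifies every divergent direction in $\teich(\partial_0 M \setminus Q)$ with a short curve supported on a specific essential subsurface of $\partial_0 M \setminus Q$.

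The iterative step exploits the fact that $\iota_* \circ \sigma_h$ reflects short curves from one side of $\partial_0 M$ to its image under $\iota$ via the topology of $M$. Since $M$ is neither an $I$-bundle over a closed surface nor a solid torus, this reflection forces the newly pinched subsurface at each step to lie properly inside the previous one; equivalently, each iteration strictly refines a hierarchy of essential subsurfaces of $\partial_0 M \setminus Q$. The depth of any such hierarchy is bounded by a function of $\chi(\partial_0 M)$, so after $n = n(\chi(\partial_0 M))$ steps the process must terminate, yielding the desired bound.

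The principal obstacle, in my view, will be controlling the interplay between the rigid data $h$ on $\partial_1 M$ and the evolving geometry on $\partial_0 M$ while keeping track of the cusps $Q$. The partial core embedding must be carried out relative to $Q$, and one must rule out the possibility that repeated pinching on $\partial_0 M$, combined with the fixed parabolics coming from $\gamma^M \subset Q$, produces an essential torus in the geometric limit---a configuration forbidden by the atoroidality assumptions, but one that does not arise in the setting of Theorem \ref{main} where $Q$ is empty. Handling this carefully, together with the fact that $h$ forces the $\partial_1 M$-side of every geometric limit to be non-degenerate in a uniform way, is what I expect to demand the most work.
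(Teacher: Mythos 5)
There is a genuine gap in the step where you extract the integer $n$. You claim that because $M$ is neither an $I$-bundle nor a solid torus, each application of $\iota_*\circ\sigma_h$ ``forces the newly pinched subsurface at each step to lie properly inside the previous one,'' so that the process terminates because a strictly decreasing hierarchy of essential subsurfaces of $\partial_0 M\setminus Q$ has depth bounded by $\chi(\partial_0 M)$. This mechanism is not correct: there is no reason for strict nesting at each step. The curve (or subsurface) responsible for divergence at stage $k+1$ is related to the one at stage $k$ only by cobounding an essential annulus in $M$ (\cref{unbounded has a root}, \cref{relative unbounded}), and such chains of annuli can propagate through product $I$-pairs of the characteristic submanifold for many steps without any decrease in complexity --- this is exactly the situation handled by \cref{long bundle}, where $\Sigma^{n}=\Sigma^{n+2K}$ up to isotopy. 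The actual source of termination is not a monotonicity on $\partial_0 M$ but the atoroidality of the glued-up manifold $N$: stacking $2K$ consecutive annuli in $N$ forces, by a pigeonhole argument on the at most $2K$ boundary curves of $\Sigma^n$, an embedded essential torus in $N$ (\cref{non-extendible}), and the bound $n=2K^2$ with $K$ the number of curves in a pants decomposition of $\partial_0 M$ comes from combining this with the bounded number of times $\Sigma^n$ can strictly decrease. Your argument as written never invokes the atoroidality of the glued manifold in this role, and without it no bound on $n$ can be extracted; the exclusion of $I$-bundles and solid tori serves a different purpose (making the skinning map and the submanifold $V_d$ of \cref{long setting} behave correctly), not the one you assign to it.

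A secondary but substantial issue is that the analytic heart of the induction --- passing from a curve $d$ with $m(\sigma_h(m_i),d,\mu)\longrightarrow\infty$ to a curve $d'$ with $m(m_i,d',\mu)\longrightarrow\infty$ such that $d\cup d'$ bounds an essential annulus in $M$ --- is only gestured at (``reflects short curves from one side to its image under $\iota$''). This is where most of the work lies: one must re-mark by twisting in the characteristic submanifold, pass to a geometric limit, embed the partial cores of the pieces of $V_d$ using \cite{BBCM}, and analyse on which side of the rank-two cusp associated with $\rho_\infty(d)$ each surface $F_j$ sits (\cref{twists}, \cref{embedded core => bounded}, \cref{cusp on one side}); in the pared setting one must additionally run \cref{pared relative convergence} keeping $Q$ inside the paring locus and allow thickened-torus components of the characteristic submanifold. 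Your outline correctly names the ingredients but does not supply the argument that makes the reflection step work.
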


\end{generalization}

\subsection{Outline}
We are going to find $n$ such that if the image of $(\iota_* \circ \sigma_m)^n$ is unbounded then $N$ contains a non-peripheral incompressible torus, contradicting our assumption. For that purpose we shall use the invariant $m$ introduced in \cite{BBCL}.

Given a simple closed curve $d$ on a closed surface $S$ equipped with a hyperbolic metric $g$, we define $$m(g,d,\mu)=\max\left\{\sup_{\substack{Y : \ d\subset\partial Y}}d_Y(\mu(g),\mu),{1\over \len_{g}(d)}\right\},$$
where $\mu(g)$ is a shortest marking for $(S,g)$, $\mu$ is a full marking, and the supremum of the first term in the maximum is taken over all incompressible
subsurfaces $Y$ of $S$ whose boundaries $\partial Y$ contain  $d$. See \cref{invariant m} for more details.

It is not difficult to see that in the setting of \cref{main},  for a given sequence $\{m_i\}$ in $\teich(\partial M)$, if the sequence $\{\sigma(m_i)\}$ is unbounded, then there is a simple closed curve $d$ such that $m(\sigma(m_i),d,\mu)$ is unbounded (see \cref{no parabolics=bounded}). The core of our argument consists in showing, with the help of arguments from \cite{BBCM-gt} and \cite{BBCL}, that in this situation, there is a simple closed curve $d'\subset\partial M$ such that $\{m(m_i,d', \mu)\}$ is unbounded and that $d\cup d'$ bounds an essential annulus in $M$. Using this argument repeatedly, we build (when $\{(\iota_* \circ\sigma)^n(m_i)\}$ is unbounded) an annulus in $N$ which goes through the interior of $M$ (viewed as a subset of $N$) $n$ times. If $n$ is large enough, this annulus must create an essential torus in $N$, and contradicts the assumption that $N$ is atoroidal.

Although this is the overall logic of the proof, in the following sections, we shall present the main steps in a different order. 
After setting up some preliminary definitions in \cref{prelim}, we shall discuss the topological part of the proof  in \cref{topology}. 
First we show that we can add some assumptions on the topology of $M$ which will simplify the arguments later on. 
Next, we study incompressible surfaces on $\partial M$ which can be extended multiple times through the characteristic submanifold of $M$ when it is viewed as a submanifold of $N$. 
This will give us an integer $n$ which appears in  \cref{main}. In  \cref{conv-div} we shall discuss the relation between the behaviour of the invariant $m$ defined above, and the convergence and divergence of Kleinian groups. In  \cref{main prop} we shall prove our key proposition, and obtain the curve $d'$ mentioned above. Finally in  \cref{conclusion} we shall put these pieces together to prove our main theorem.


%

\section{Preliminaries}	\label{prelim}
\subsection{Haken manifolds and characteristic submanifolds}
\label{3-manifold}
An orientable irreducible compact 3-manifold which contains a non-peripheral incompressible surface is called a {\em Haken} manifold.
We note that a compact irreducible 3-manifold with non-empty boundary is always Haken except for a 3-ball.
We say that a Haken manifold is {\em atoroidal} when it does not contain a non-peripheral incompressible torus, and {\em acylindrical} when it does not contain a non-peripheral incompressible annulus.
By the torus theorem for Haken manifolds (\cite{Wa2, JaSh, Joh}), the former condition of the atoroidality is equivalent to the one that every monomorphism from $\integers \times \integers$ into the fundamental group is peripheral, \ie is conjugate to the image of the fundamental group of a boundary component.

The Jaco-Shalen-Johannson theory \cite{JaSh,Joh} tells us that in a Haken manifold, incompressible tori and incompressible annuli can stay only in a very restricted place.
Let us state what the theory says in the case when a Haken manifold $M$ is atoroidal and boundary-irreducible.

For an orientable atoroidal Haken boundary-irreducible 3-manifold $M$, there exists a 3-submanifold $X$ of $M$ each of whose components is one of the following and which satisfies the following condition:
\begin{enumerate}[(a)]
\item An $I$-bundle whose associated $\partial I$-bundle coincides with its intersection with $\partial M$.
Such a component is called a {\em characteristic $I$-pair}.
\item A solid torus $V$ such that $V \cap \partial M$ consists of annuli which are incompressible on both $\partial V$ and $\partial M$.
When $V \cap \partial M$ is connected, it winds around the core curve of $V$ more than once.
\item
A thickened torus $S^1 \times S^1 \times I$ at least one of whose boundary components lies on a component of $\partial M$. 
\end{enumerate}
Every properly embedded essential annulus (i.e. an incompressible annulus which is not homotopic into the boundary) is properly isotopic into $X$, and no component of $X$ is properly isotopic into another component.

Such $X$ is unique up to isotopy, and is called the {\em characteristic submanifold} of $M$.
We note that in the case when $M$ has no torus boundary component, which is the assumption of our main theorem, a component of the last type (c) does not appear.

Thurston's celebrated uniformisation theorem for Haken manifolds says that every atoroidal Haken manifold whose boundary consists of incompressible tori admits a hyperbolic structure of finite volume.
More generally, he proved that every atoroidal Haken manifold, including the case when it has non-torus boundary components, admits a (minimally parabolic) convex hyperbolic structure of finite volume.
The term \lq convex hyperbolic structure' will be explained in the following subsection.
	
%

\subsection{Kleinian groups and their deformation spaces}
A Kleinian group is a discrete subgroup of $\pslc$.
In this paper, we always assume Kleinian groups to be torsion free, and finitely generated except for the case when we talk about geometric limits.
For a Kleinian group $\Gamma$, we can consider the complete hyperbolic 3-manifold $\hyperbolic^3/\Gamma$.
The {\em convex core} of $\hyperbolic^3/\Gamma$ is the smallest convex submanifold that is a deformation retract.
The Kleinian group $\Gamma$ and the corresponding hyperbolic 3-manifold $\hyperbolic^3/\Gamma$ are said to be {\em geometrically finite} when the convex core of $\hyperbolic^3/\Gamma$ has finite volume.
In particular, $\hyperbolic^3/\Gamma$ is said to be {\em convex compact}, and $\Gamma$ to be {\em convex cocompact} if the convex core is compact.
We also say that $\Gamma$ is {\em minimally parabolic} when every parabolic element in $\Gamma$ is contained in a rank-2 parabolic subgroup.
Any convex cocompact Kleinian group is automatically minimally parabolic since it does not have parabolic elements.

%
A 3-manifold $M$ is said to have a  hyperbolic structure when $\Int M$ is homeomorphic to 
$\hyperbolic^3/\Gamma$ for a  Kleinian group $\Gamma$, and we regard the pull-back of the hyperbolic metric to $\Int M$ as a hyperbolic structure on $M$.
In particular if $\Gamma$ is taken to be geomerically finite or convex cocompact, we say that $M$ has a geometrically finite or convex compact hyperbolic structure.
If $M$ admits a hyperbolic structure, then $M$ must be atoroidal.

The set of hyperbolic structures on $M$ modulo isotopy, which we denote by $\AH(M)$, can be identified with a subset of the set of faithful discrete representations of $\pi_1(M)$ into $\pslc$ modulo conjugacy.
We put on  $\AH(M)$ a topology induced from the weak topology on the representation space.
We regard an element of $\AH(M)$ both as a hyperbolic structure on $M$ and as a representation of $\pi_1(M)$ into $\pslc$ depending on the situation.

A Kleinian group $G$ is said to be a {\em quasi-conformal deformation} of another Kleinian group $\Gamma$ if there is a quasi-conformal homeomorphism $f \colon \hat \complexes \to \hat \complexes$ such that $G= f \Gamma f^{-1}$ as M\"obius transformations on $\hat \complexes$.
When $G$ is a quasi-conformal deformation of $\Gamma$, there is a diffeomorphism from $\hyperbolic^3/\Gamma$ to $\hyperbolic^3/G$ preserving the parabolicity in both directions, which induces an isomorphism between the fundamental groups coinciding with the isomorphism given by the conjugacy $G=f\Gamma f^{-1}$.
We note that a quasi-conformal deformation of geometrically finite (resp. convex cocompact, minimally parabolic geometrically finite) group is again geometrically finite (resp. convex cocompact, minimally parabolic geometrically finite).

Let $M$ be a compact 3-manifold admitting a minimally parabolic geometrically finite hyperbolic structure $m$.
Let $\QH(M)$ denote the set of all minimally parabolic geometrically finite hyperbolic structures on $M$ modulo isotopy, which is regarded as a subset of $\AH(M)$.
Marden \cite{Mar} showed that every minimally parabolic geometrically finite hyperbolic structures on $M$ is obtained as a quasi-conformal deformation of $m$.
Therefore we call $\QH(M)$ the {\em quasi-conformal deformation space}.
Furthermore, if $\partial M$ is incompressible, combined with the work of Ahlfors, Bers, Kra, Maskit and Sullivan, there is a parameterisation $q \colon \teich(\partial M) \to \QH(M)$, where $\teich(\partial M)$ denotes the Teichm\"uller space of $\partial M$, \ie the direct product of the Teichm\"uller spaces of the components of $\partial M$.
We shall refer to this map as the {\em Ahlfors-Bers map}.

\begin{generalization}
In the setting of \cref{more general}, we need to consider 

\end{generalization}

In the case when $M$ is homeomorphic to $S \times [0,1]$ for a closed oriented surface $S$, the deformation spaces $\AH(M), \QH(M)$ are denoted by $\AH(S), \QF(S)$ respectively.
The quasi-conformal deformation space $\QF(S)$ consists of quasi-Fuchsian representations of $\pi_1(S)$, \ie quasi-conformal deformations of a Fuchsian representation, and is therefore called the {\em quasi-Fuchsian space}.
The Ahlfors-Bers map can be expressed as $qf: \teich(S) \times \teich(\bar S) \to \QF(S)$, where the second coordinate $\teich(\bar S)$ denotes the Teichm\"uller space of $S$ with orientation reversed, which is a more natural way for parametrisation since the boundary component $S\times \{1\}$ has the opposite orientation from the one given on $S\times \{0\}$ if we identify them with $S$ by dropping the second factor.

Now, let $M$ be an atoroidal Haken 3-manifold with non-empty incompressible boundary which does not contain a torus. 
Suppose that $M$ has a convex compact hyperbolic metric $m$, and let $S$ be a component of $\partial M$. Take a covering of $M$ associated with $\pi_1(S) \subset \pi_1(M)$, and lift the hyperbolic structure $m$ to the hyperbolic structure $\tilde m$ on $S \times [0,1]$.
It is known (see \cite[Proposition 7.1]{Mo}) that the lifted structure $\tilde m$ is also convex cocompact, hence can be regarded as an element of $\QF(S)$.
Therefore $\tilde m$ in turn corresponds to a point $(g_S(m), h_S(m))$ in $\teich(S) \times \teich(\bar S)$.
Let $S_1, \dots , S_k$ be the components of $\partial M$ that are not tori, and we consider the point $h_{S_i}(m) \in \teich(\bar S_i)$ for each $i=1, \dots ,k$.
We define $\teich(\bar \partial M)$ to be $\teich(\bar S_1) \times \dots \times \teich(\bar S_k)$.
The map taking $g \in \teich(\partial M)$ to $(h_{S_1}(q(g)), \dots, h_{S_k}(q(g))) \in \teich(\bar \partial M)$ is called the {\em skinning map}, which we shall denote by $\sigma$.

\begin{generalization}
Also in the setting of \cref{more general}, we consider a minimally parabolic geometrically finite hyperbolic structure $m$ on $M$.
The boundary of $M$ is divided into three union of components, $\partial_0 M$, $\partial_1 M$, and the union of the torus components.
Let $S$ be a component of $\partial_0 M$.
Then $m$ lifts to a geometrically finite hyperbolic structure $\tilde M$ on $S \times [0,1]$.
We note that $\tilde m$ may not be convex cocompact, since there might be a simple closed curve on $S$ which is homotopic into a torus boundary component in $M$.
\end{generalization}
\subsection{Curve complexes and projections}
\label{cc}
Let $S$ be a connected compact orientable surface possibly with boundary, satisfying $\xi(S)=3g+n \geq 4$ where $g$ denotes the genus and $n$ denotes the number of the boundary components.
The {\em curve complex} $\cc(S)$ of $S$ with $\xi(S) \geq 5$ is a simplicial complex whose vertices are  isotopy classes of non-peripheral, non-contractible simple closed curves on $S$ such that  $n+1$ vertices span an $n$-simplex when they are represented by pairwise disjoint simple closed curves.
In the case when $\xi(S)=4$, we define $\cc(S)$ to be a graph whose vertices are isotopy classes of simple closed curves such that two vertices have smallest possible intersection.
In the case when $S$ is an annulus, we define $\cc(S)$ to be a graph whose vertices are isotopy classes (relative to the endpoints) of non-peripheral simple arcs in $S$ such that two vertices are connected when they can be represented by arcs which are disjoint at their interiors.
Masur-Minsky \cite{MM} proved that $\cc(S)$  is Gromov hyperbolic  with respect to the path metric for any $S$.

A {\em marking} $\mu$ on $S$ consists of a pants decomposition of $S$, which is denoted by $\base(\mu)$ and whose components are called base curves, and  a collection $\mathbf t(\mu)$ of simple closed curves, called transversals of $\base(\mu)$, such that each component of $\base(\mu)$ intersects at most one among them essentially.
For two markings  $\mu, \nu$ on $S$ and a subsurface $Y$, we define $d_Y(\mu, \nu)$ to be the distance between $\pi_Y(\base(\mu)\cup \mathbf t (\mu))$ and $\pi_Y(\base(\nu)\cup \mathbf t(\nu))$, where the projection $\pi_Y \colon \cc(S) \to \mathfrak{P}(\cc(Y))$ is obtained by taking the intersection of curves on $S$ with $Y$ and connecting the endpoints by arcs on $\Fr Y$ when the intersection contains arcs.
In \cite{MM2}, a marking defined as such is called clean.
In this paper, we only consider clean markings.
A marking is called {\em full} when every base curve has a transversal.
In general, for two sets of simple closed curves $a, b$ and a subsurface $Y$ of $S$, we define $d_Y(a,b)$ to be the distance in $\cc(Y)$ between $\pi_Y(a)$ and $\pi_Y(b)$ provided that both of them are non-empty.
If one of them is empty, the distance is not defined.
For a point $m$ in $\teich(S)$, its shortest marking, which is a full marking and is denoted by  $\mu(m)$, has  a shortest pants decomposition of $(S,m)$ as $\base(\mu(m))$, and $\mathbf t(\mu(m))$ consisting of shortest transversals, one for each component of $\base(\mu(m))$.
When we talk about the distance $d_Y$ between two points in $\teich(S)$ or between a point in $\teich(S)$ and a marking, we identify points $m \in \teich(S)$ with $\mu(m)$.

\subsection{Geometric limits and compact cores}
\label{geom limit}
Let $M$ be an atoroidal boundary-irreducible Haken 3-manifold.
Let $\{\rho_i\}$ be a sequence of faithful discrete representations of $\pi_1(M)$ into $\pslc$.
We define a {\em geometric limit} of $\{\rho_i(\pi_1(M))\}$ to be a Kleinian group $\Gamma$ such that every element $\gamma$ of $\Gamma$ is a limit of some sequence $\{g_i \in \rho_i(\pi_1(M))\}$, and every convergent sequence $\{\gamma_{i_j} \in \rho_{i_j}(\pi_1(M))\}$ has its limit in $\Gamma$.

Fixing a point $x \in \hyperbolic^3$, and considering its projections $x_i$ in $\hyperbolic^3/\rho_i(\pi_1(M))$ and $x_\infty$ in $\hyperbolic^3/\Gamma$, the geometric convergence implies the existence of pointed Gromov-Hausdorff convergence of $((\hyperbolic^3/\rho_i(\pi_1(M)), x_i))$ to $(\hyperbolic^3/\Gamma, x_\infty)$.
This latter convergence means that there exist real numbers $r_i$ going to $\infty$, $K_i$ converging to $1$, and $K_i$-bi-Lipschitz diffeomorphisms $f_i$ (called {\em approximate isometries}) between $r_i$-balls $B_{r_i}(\hyperbolic^3/\rho_i(\pi_1(M)),x_i)$ and $B_{K_ir_i}(\hyperbolic^3/\Gamma, x_\infty)$.
Suppose that $\{\rho_i\}$ converges to $\rho_\infty: \pi_1(M) \to \pslc$ as representations and that $\{\rho_i(\pi_1(M))\}$ converges to $\Gamma$ geometrically.
Then, $\rho_\infty(\pi_1(M))$ is a subgroup of the geometric limit $\Gamma$.

For an open irreducible 3-manifold $V$ with finitely generated fundamental group, a compact 3-dimensional submanifold $C$ in $V$ is called a {\em compact core} when the inclusion induces an isomorphism between their fundamental groups.
The existence of compact cores was proved by Scott \cite{Sc}.
The case which interests us is when $V$ is a hyperbolic 3-manifold.

Let $\hyperbolic^3/G$ be a hyperbolic 3-manifold associated with a finitely generated, torsion free Kleinian group $G$.
By Margulis's lemma, there is a positive constant $\varepsilon_0$ such that the set of points of $\hyperbolic^3/G$ where the injectivity radii are less than $\varepsilon_0$ consists of a finite disjoint union of tubular neighbourhoods of closed geodesics of length less than $\varepsilon_0$, called {\em Margulis tubes}, and {\em cusp neighbourhoods} each of which is stabilised by a maximal parabolic subgroup of $G$, and whose quotient by its stabiliser is homeomorphic to $S^1 \times \reals^2$ when the stabiliser has rank 1, and to $S^1 \times S^1 \times \reals$ when the stabiliser has rank 2.
The former cusp neighbourhood is called a $\integers$-cusp neighbourhood, and the latter a torus cusp neighbourhood.
The union of the cusp neighbourhoods is called the cuspidal part of $\hyperbolic^3/G$.
The complement of the cuspidal part is called the {\em non-cuspidal part} and is denoted by $(\hyperbolic^3/G)_0$.
Each boundary component of $(\hyperbolic^3/G)_0$ is either an open annulus or a torus.
By the relative compact core theorem by McCullough \cite{Mc}, there is a compact core $C_G \subset (\hyperbolic^3/G)_0$ such that for each boundary component $B$ of $(\hyperbolic^3/G)_0$, the intersection $C_G \cap B$ is a core annulus when $B$ is an open annulus, and is the entire $B$ when $B$ is a torus.
We call such a compact core a {\em relative compact core} of $(\hyperbolic^3/G)_0$.


Let $p \colon \hyperbolic^3/\rho_\infty(\pi_1(M)) \to \hyperbolic^3/\Gamma$ be the covering map associated with the inclusion of  $\rho_\infty(\pi_1(M))$ into the geometric limit $\Gamma$.
Let $C$ be a relative compact core of $(\hyperbolic^3/\rho_\infty(\pi_1(M)))_0$.
Suppose that $\hyperbolic^3/\Gamma$ has a torus cusp neighbourhood $T$.
We say that $\hyperbolic^3/\rho_\infty(\pi_1(M))$ {\em wraps around} $T$ when $p|C$ is homotoped to an immersion which goes around $T$ non-trivially, and hence cannot be homotoped to an embedding.

\section{Topological features}	\label{topology}

\subsection{Coverings}
In this subsection, we shall show that  to prove \cref{main}, 
 we can put an extra assumption that all the characteristic  $I$-pairs of $M$ are product bundles.
 
We consider an atoroidal Haken manifold as is given in \cref{main}. 
Let $p \colon \tilde{M}\to M$ be a finite-sheeted regular covering.
Then $p$ induces the covering map between the boundaries $p_\partial \colon \partial \tilde M \to \partial M$.
This map induces a proper embedding between Teichm\"{u}ller spaces, $p_\partial^* \colon \teich(\partial M) \to \teich(\partial \tilde M)$ which is obtained by pulling back the conformal structures by $p_\partial$.
Also the involution $\iota$ lifts to an orientation-reversing involution $\tilde \iota \colon \partial \tilde M \to \partial \tilde M$ taking each component to another one.
Since $\tilde M$ is also an atoroidal boundary-irreducible Haken manifold, we can consider the skinning map $\tilde \sigma \colon \teich(\partial \tilde M) \to \teich(\bar{\partial} \tilde M)$.

\begin{generalization}
Also in the setting of \cref{more general}, for a finite-sheeted covering $p \colon \tilde M \to M$, we have a covering map $p_\partial \colon \partial_0 \tilde M \to \partial_0 M$, where $\partial_0 \tilde M=p^{-1}(\partial_0 M)$.
In the same way as above, we can consider the skinning map $\tilde \sigma_h \colon \teich(\partial_0 \tilde M) \to \teich(\bar{\partial}_0 \tilde M)$ lifting $\sigma_h$ for every $h \in \teich(\partial_1 M)$.
\end{generalization}

\begin{lemma}
\label{covering}
If $(\tilde \iota \circ \tilde \sigma)^n$ has bounded image for some $n\in \naturals$, then so does $(\iota \circ \sigma)^n$.

\begin{generalization}
In the same way, for every $h \in \teich(\partial_1M)$,  if $(\tilde \iota \circ \tilde \sigma_h)^n$ has bounded image for $n$, then so does $(\iota \circ \sigma_h)^n$.
\end{generalization}
\end{lemma}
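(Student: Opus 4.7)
The plan is to reduce the statement to properness of the pull-back map $p_\partial^*$ together with naturality of the skinning construction under finite covers. More precisely, I would establish the commutative diagram
\[
\begin{CD}
\teich(\partial M) @>\iota_*\circ\sigma>> \teich(\partial M) \\
@Vp_\partial^*VV @VVp_\partial^*V \\
\teich(\partial\tilde M) @>\tilde\iota_*\circ\tilde\sigma>> \teich(\partial\tilde M),
\end{CD}
\]
and then conclude by iterating.

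First, I would verify commutativity with the involutions: since $\tilde\iota$ is defined to be a lift of $\iota$, we have $p_\partial\circ\tilde\iota=\iota\circ p_\partial$, and this immediately gives $p_\partial^*\circ\iota_*=\tilde\iota_*\circ p_\partial^*$ at the level of Teichm\"uller spaces. Second, I would verify commutativity with the skinning maps. Given $g\in\teich(\partial M)$, the Ahlfors--Bers parametrisation produces a convex compact structure $q(g)$ on $M$; its pull-back $p^*q(g)$ to $\tilde M$ is again convex compact and corresponds to the boundary datum $p_\partial^*(g)\in\teich(\partial\tilde M)$ under the Ahlfors--Bers map for $\tilde M$. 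Now fix a component $\tilde S$ of $\partial\tilde M$ and let $S=p_\partial(\tilde S)$. The inclusion $\pi_1(\tilde S)\hookrightarrow\pi_1(\tilde M)\hookrightarrow\pi_1(M)$ realises $\pi_1(\tilde S)$ as a finite-index subgroup of $\pi_1(S)$, so the covering of $M$ associated with $\pi_1(\tilde S)$ is a finite cover of the quasi-Fuchsian manifold associated with $\pi_1(S)$. Consequently, the quasi-Fuchsian parameters in $\teich(\tilde S)\times\teich(\bar{\tilde S})$ are the pull-backs under $p_\partial$ of the quasi-Fuchsian parameters in $\teich(S)\times\teich(\bar S)$; taking second coordinates gives $h_{\tilde S}(p^*q(g))=p_\partial^*(h_S(q(g)))$. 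Assembling over all boundary components of $\tilde M$ yields $\tilde\sigma\circ p_\partial^*=p_\partial^*\circ\sigma$.

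Combining the two commutativities, $p_\partial^*\circ(\iota_*\circ\sigma)^n=(\tilde\iota_*\circ\tilde\sigma)^n\circ p_\partial^*$ for every $n$. Now suppose $(\tilde\iota_*\circ\tilde\sigma)^n$ has bounded image in $\teich(\partial\tilde M)$. Then a fortiori its restriction to the subset $p_\partial^*(\teich(\partial M))$ is bounded, so $p_\partial^*$ sends the image of $(\iota_*\circ\sigma)^n$ into a bounded subset of $\teich(\partial\tilde M)$. Since pulling back conformal structures under the finite covering $p_\partial$ defines a proper embedding $p_\partial^*\colon\teich(\partial M)\to\teich(\partial\tilde M)$, the preimage of a bounded set is bounded, and we conclude that the image of $(\iota_*\circ\sigma)^n$ is bounded in $\teich(\partial M)$.

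I expect the main (minor) subtlety to be the naturality statement for the skinning map, i.e.\ checking that the covering map from the quasi-Fuchsian cover of $M$ by $\pi_1(\tilde S)$ to the one by $\pi_1(S)$ is a finite-sheeted covering of hyperbolic $3$-manifolds, so that the induced map on the boundary conformal structures is exactly the pull-back by a finite cover of $S$; once this functoriality is in hand, everything else is formal. The generalisation to \cref{more general} is identical provided one works with $\partial_0\tilde M=p^{-1}(\partial_0 M)$ and replaces $\teich(\partial M)$ by $\teich(\partial_0 M\setminus Q)$ throughout, using that $h\in\teich(\partial_1 M\setminus Q)$ also lifts canonically to a point in $\teich(\partial_1\tilde M\setminus\tilde Q)$ under $p_\partial^*$.
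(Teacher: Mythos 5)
Your proposal is correct and follows essentially the same route as the paper: establish the intertwining relation $p^*_\partial\circ(\iota_*\circ\sigma)=(\tilde\iota_*\circ\tilde\sigma)\circ p^*_\partial$, iterate it, and conclude via the properness of the embedding $p^*_\partial$. The only difference is that you spell out the naturality of the skinning map under finite covers (via the finite-index inclusion $\pi_1(\tilde S)\subset\pi_1(S)$ and the corresponding covering of quasi-Fuchsian manifolds), which the paper simply asserts "by the definition of the maps $\tilde\sigma$ and $\tilde\iota$"; this added verification is sound.
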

\begin{proof}
The map $p^*_\partial$ properly embeds $\teich(\partial M)$ into $\teich(\partial \tilde M)$.
By the definition of the maps $\tilde \sigma$ and $\tilde \iota$, we have $p^*_\partial \circ (\iota \circ \sigma)=(\tilde \iota \circ \tilde \sigma) \circ p^*_\partial$, and hence $p^*_\partial \circ (\iota \circ \sigma)^n=(\tilde \iota \circ \tilde \sigma)^n \circ p^*_\partial$.
Therefore, if the image of $(\tilde \iota \circ \tilde \sigma)^n$ is bounded, the properness of $p^*_\partial$ implies that the image of $(\iota \circ \sigma)^n$ must also be bounded.
\begin{generalization}
The same argument works also for the second statement.
\end{generalization}
\end{proof}

This result allows us to work on manifolds with topological features that will make the arguments simpler.

\begin{lemma}
	\label{trivial pairs}
	Let $M$ be an orientable atoroidal Haken manifold with incompressible boundary.
	\begin{generalization}
		or in \cref{more general}.
	\end{generalization}
	Then there is a double covering of $M$ all of whose characteristic $I$-pairs are product $I$-bundles.
\end{lemma}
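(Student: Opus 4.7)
The plan is to produce the cover as the one associated to a homomorphism $\phi \colon \pi_1(M) \to \mathbb{Z}/2\mathbb{Z}$ chosen to trivialise every twisted $I$-pair. Let $E_1, \ldots, E_k$ be the twisted $I$-pair components of the characteristic submanifold $X \subset M$, and write $E_i = F_i \,\tilde{\times}\, I$ with $F_i$ non-orientable (which is forced by the orientability of $M$). The orientation character $w_i \colon \pi_1(F_i) = \pi_1(E_i) \to \mathbb{Z}/2\mathbb{Z}$ has kernel $\pi_1(\tilde F_i)$, where $\tilde F_i \to F_i$ is the orientation double cover, and the connected double cover of $E_i$ corresponding to $\ker w_i$ is exactly the product bundle $\tilde F_i \times I$.

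I would then produce $\phi$ by a cohomological patching argument. Regarding $w_i$ as a class in $H^1(E_i; \mathbb{Z}/2\mathbb{Z})$ and $0$ as a class in $H^1(\overline{M \setminus X}; \mathbb{Z}/2\mathbb{Z})$, the Mayer--Vietoris sequence for the decomposition $M = X \cup \overline{M \setminus X}$ along the frontier annuli $A = X \cap \overline{M \setminus X}$ shows that a global class $\phi \in H^1(M; \mathbb{Z}/2\mathbb{Z})$ with these restrictions exists provided the two restrictions agree on $A$. The frontier annuli of $E_i$ are the restrictions of the $I$-bundle $E_i \to F_i$ over the boundary components of $F_i$; the core of such an annulus represents the class $[c] \in \pi_1(F_i)$ of a boundary circle $c$, and a collar of $c$ in $F_i$ is always an annulus (rather than a M\"obius band), hence $c$ is orientation-preserving and $w_i([c]) = 0$. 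On other frontier annuli (those of solid-torus or thickened-torus components of $X$) both specified restrictions are $0$. So the Mayer--Vietoris obstruction vanishes and $\phi$ exists. It is surjective whenever $k \geq 1$; if $k = 0$ there is nothing to prove and we may take the trivial disconnected double cover $M \sqcup M \to M$.

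Let $p\colon \tilde M \to M$ be the double cover associated with $\ker \phi$. By construction $p^{-1}(E_i)$ is the cover of $E_i$ corresponding to $\ker w_i$, namely the product bundle $\tilde F_i \times I$; the preimage of any product $I$-pair $F \times I$ is still a product bundle (either two disjoint copies, or $\tilde F \times I$ for some double cover of $F$). The remaining step is to identify these preimages with the characteristic $I$-pairs of $\tilde M$. This reduces to the standard fact that the characteristic submanifold of an orientable atoroidal Haken manifold is compatible with finite covers: $p^{-1}(X)$ consists of submanifolds falling into the three types (a)--(c), has incompressible frontier by $\pi_1$-injectivity of $p$, and is maximal because any essential annulus in $\tilde M \setminus p^{-1}(X)$ would project to an essential annulus in $M \setminus X$, contradicting maximality of $X$.

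The main obstacle I anticipate is making this final equivariance step fully rigorous rather than merely appealing to folklore, in particular verifying that no component of $\tilde M \setminus p^{-1}(X)$ admits an essential annulus and that $p^{-1}(X)$ has no two components properly isotopic to one another. Both points follow by pushing essential annuli downstairs and invoking the Jaco--Shalen--Johannson uniqueness for $M$, but this requires care with the covering map. Once this is in hand, the lemma is immediate.
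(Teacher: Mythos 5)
Your approach is correct, and it is genuinely different from the paper's. The paper constructs $\tilde M$ by hand: it takes the orientation double covers $\tilde W_i$ of the twisted $I$-pairs $W_i$ only, prepares two copies $C^\pm$ of each component $C$ of $M\setminus\bigcup W_i$, glues each $C^\pm$ to the appropriate lift $A_j^\pm$ of each frontier annulus $A_j\subset W_i$, and then exhibits the free involution $t$ swapping $C^+$ and $C^-$ to recognise the result as a double cover. Your cohomological version instead specifies the cover by prescribing a class $\phi\in H^1(M;\mathbb{Z}/2)$ and verifying the Mayer--Vietoris obstruction vanishes; the vanishing is exactly the (correct) observation that the orientation character $w_i$ kills the cores of the frontier annuli, because boundary circles of a surface are always two-sided. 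This is the more systematic route: the paper must check by hand that its gluings assemble into a connected covering with a free $\mathbb{Z}/2$ action, whereas you get regularity and well-definedness for free from the existence of $\phi$. Note, though, that the two methods are building the same covering --- since $\phi$ vanishes on $\pi_1$ of every piece except the twisted $E_i$'s, the preimage of each complementary piece is two trivial copies and the preimage of each $E_i$ is the orientation double cover, which is exactly the paper's picture. The one thing you flag as a potential obstacle --- that $p^{-1}(X)$ is the characteristic submanifold of $\tilde M$, so that no new twisted $I$-pairs appear upstairs --- is indeed the step both proofs rely on; the paper treats it just as quickly (``all of the characteristic $I$-pairs contained in $C^\pm$ are product bundles''). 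The precise justification is the Enclosing Theorem of JSJ theory: an essential annulus in $\tilde M$ pushes down to an essential \emph{map} of an annulus into $M$, which by Jaco--Shalen/Johannson is homotopic into $X$, and then one lifts the homotopy and uses Waldhausen to upgrade to an isotopy into $p^{-1}(X)$. So you are not missing anything the paper supplies; you are simply more candid about the invocation. The small fix I would make in your write-up is that when forming the Mayer--Vietoris pair you should explicitly set the class on the non-$I$-bundle components of $X$ (and on the product $I$-pairs) to $0$; this is implicit in your phrasing but worth stating.
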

\begin{proof}
	Let $W_1, \dots , W_p$ be the characteristic $I$-pairs of $M$ that are twisted $I$-bundles.
	Take their double coverings $\tilde W_1, \dots, \tilde W_p$ corresponding to the orientation double coverings of the base surfaces.
	For each $W_j$ among $W_1, \dots , W_p$,  its frontier components (\ie the closures of the components of $\partial W_j \setminus \partial M$) are annuli.
	Each of such annuli has two pre-images in $\tilde W_j$ which are taken to each other by the unique non-trivial covering translation.
	
	Let $C$ be the closure of a component of $M \setminus (W_1 \cup \dots \cup W_p)$.
	Let $A_1, \dots , A_k$ be the components of $C \cap (W_1\cup \dots \cup W_p)$, which are annuli on $\cup_{l=1}^p \overline{\partial W_l \setminus \partial M}$.
	We prepare two copies $C^+$ and $C^-$ of $C$.
	Each $A_j$ among $A_1, \dots A_k$, which is contained some $W_i$ among $W_1, \dots , W_p$, has two lifts $A_j^+$ and $A_j^-$ in $\tilde W_i$.
	Now we identify the copy of $A_j$ in $C^+$ to $A_j^+$ in $\tilde W_i$ and the one in $C^-$ to $A_j^-$ in $\tilde W_i$ for each annulus among $A_1, \dots A_k$.
	We repeat the same procedure for every component $C$ of $M \setminus (W_1 \cup \dots \cup W_p)$, and get a manifold $\tilde M$, which will turn out to be a double cover of $M$ as shown below.
	
	Define a homeomorphism $t\colon \tilde M \to \tilde M$ to be the covering translation on each $\tilde W_i$ and  the map taking $C^\pm$ to $C^\mp$ preserving the identification with $C$ for each of $C^\pm$.
	It is clear from the definition that this homeomorphism $t$ is a free involution.
	By taking the quotient of $\tilde M$ under $\langle t \rangle \cong \integers_2$, we get a manifold naturally identified with $M$.
	Thus we see that $\tilde M$ is a double cover of $M$.
	Since $\tilde W_i$ is a product $I$-bundle and all of the characteristic $I$-pairs contained in $C^\pm$ are product bundles by our definition of $W_1, \dots , W_p$, we see that $\tilde M$ is a double cover as desired. 
\end{proof}

For some of our arguments we will need a stronger assumption than having only product bundles:

\begin{definition}
Let $M$ be a compact orientable Haken $3$-manifold with incompressible boundary. We say that $M$ is {\em strongly untwisted} if and only if:
\begin{enumerate}[(A)]
	\item Every characteristic $I$-pair is a product bundle.
	\item For any characteristic $I$-pair $\Xi$ and any simple closed curve $d\subset\partial M$, the simple closed curve $d$ can be homotoped on $\partial M$ into at most one component of $\Xi\cap\partial M$.
\end{enumerate}

\end{definition}

We are going to construct a cover with the properties (A) and (B) above. 
In order to do that, we need to examine how characteristic $I$-pairs are attached to  other components of the characteristic submanifold. 
In the following proof of \cref{strongly untwisted}, it will turn out that there are two situations ((a) and (b) below) where the second condition of \lq strong untwistedness' breaks down.

\begin{lemma}
\label{strongly untwisted}
Let $M$ be a compact orientable atoroidal Haken manifold with incompressible boundary. Then there is a finite-sheeted regular covering of $M$ which is strongly untwisted.
\end{lemma}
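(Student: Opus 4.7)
My plan is to produce the required cover in two stages. First, apply \cref{trivial pairs} to pass to a double cover --- automatically regular, being of degree two --- in which every characteristic $I$-pair is a product bundle, thereby establishing condition (A). Since any lift of a product $I$-bundle is a disjoint union of product $I$-bundles and, in a regular cover, the characteristic submanifold is exactly the preimage of the characteristic submanifold below, property (A) is preserved by any further regular cover; so from now on I may assume $M$ satisfies (A).

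Next I arrange (B). A failure of (B) for a product pair $\Xi = F\times[0,1]$ means that some essential simple closed curve on $\partial M$ can be isotoped on $\partial M$ into both lids $F\times\{0\}$ and $F\times\{1\}$. Exactly two configurations make this possible, which I will label (a) and (b) following the preamble: (a) both lids of $\Xi$ lie on a common component of $\partial M$, or (b) a frontier annulus of $\Xi$ is glued to a solid-torus or thickened-torus component of the characteristic submanifold so that traversing the meridian of that Seifert piece reverses the fibre direction of $\Xi$, thereby transporting curves between the two lids through the Seifert piece. To each problematic $\Xi$ I associate a witness loop $\gamma_\Xi \subset M$ made of a fibre arc of $\Xi$ concatenated with a path on $\partial M$ --- running through the relevant frontier annuli in case (b) --- that returns from one lid to the other.

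Letting $S$ denote the boundary component of $M$ containing the basepoint, the obstruction encoded by $\gamma_\Xi$ is precisely that some element of the coset $\pi_1(S)\cdot[\gamma_\Xi]$ lift to a loop in the candidate cover. Using residual finiteness of $\pi_1(M)$ together with the separability of the boundary surface subgroups $\pi_1(S) \hookrightarrow \pi_1(M)$ in the Haken setting, I find, for each problematic $\Xi$, a finite-index subgroup $K_\Xi \leq \pi_1(M)$ with $\pi_1(S)\cdot[\gamma_\Xi] \cap K_\Xi = \emptyset$. Since there are only finitely many characteristic $I$-pairs, the intersection $K = \bigcap_\Xi K_\Xi$ is again of finite index, and passing to its normal core gives a finite-index normal subgroup of $\pi_1(M)$; the associated finite-sheeted regular cover of $M$ satisfies both (A) and (B). The main technical obstacle is the detailed analysis of case (b), where one must pin down the flipped-annulus phenomenon as a precise element of $\pi_1(M)$ through the JSJ decomposition and verify the required separability for boundary subgroups without circular appeal to hyperbolisation.
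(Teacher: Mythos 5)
Your first stage coincides with the paper's: both proofs invoke \cref{trivial pairs} to secure condition (A) and then note that (A) persists under further covers. The divergence, and the gap, is in how you handle (B). The paper does \emph{not} use residual finiteness or separability at all; it first shows that a failure of (B) forces the offending curve $d$ to lie on the boundary of a solid-torus or thickened-torus piece $T_j$ of the characteristic submanifold, with exactly two mechanisms: (a) $T_j\cap\partial M$ is a single annulus (so the two lids of $\Xi$ are joined by going once around $T_j$), or (b) two \emph{consecutive} annuli of $T_j\cap\overline{M\setminus T_j}$ lie in the same $I$-pair. It then kills (a) by an explicit $k$-fold cyclic cover unwrapping $T_j$ while the frontier annulus lifts homeomorphically, and kills (b) by a $3$-colouring of the annuli of $T\cap\overline{M\setminus T}$ and a corresponding triple cover in which consecutive annuli on each lifted Seifert piece land in different complementary pieces. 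Your description of the two cases does not match this analysis (your (a), ``both lids lie on a common boundary component,'' is neither necessary nor sufficient for a (B)-failure), which already signals that the obstruction has not been pinned down correctly.

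The central unproved step is your claim that the failure of (B) in the cover associated to $K$ is ``precisely'' the condition $\pi_1(S)\cdot[\gamma_\Xi]\cap K\neq\emptyset$ for a single witness loop $\gamma_\Xi$. Condition (B) is a statement about isotopy \emph{on the surface} $\partial\tilde M$, and whether it fails for a lift $\tilde\Xi$ depends on how the boundary annuli and frontier annuli of the relevant Seifert piece $\tilde T_j$ distribute themselves among the lifts of $\Xi$ and of the complementary pieces. A bad configuration upstairs can arise from a path that crosses $T_j$ along a \emph{different} pair of consecutive frontier annuli, or winds a different number of times around the core of $T_j$, before re-entering $\Xi$; these correspond to infinitely many distinct elements of $\pi_1(M)$, typically filling a union of double cosets of the form $\pi_1(\Xi)\,[\gamma]\,\langle c\rangle\,\pi_1(\Xi)$ (with $c$ the core of $T_j$) rather than the single coset $\pi_1(S)[\gamma_\Xi]$ of the full boundary subgroup. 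Avoiding your one coset therefore does not exclude all failures, and conversely requiring the whole coset $\pi_1(S)[\gamma_\Xi]$ to miss $K$ is not obviously the right (or even an achievable) demand. Finally, the separability input you lean on is itself problematic: residual finiteness of Haken $3$-manifold groups and separability of boundary subgroups (Long--Niblo) are usually derived via geometrisation of the pieces, so without the careful justification you defer (``the main technical obstacle''), the argument risks circularity that the paper's purely combinatorial cut-and-paste construction avoids entirely. As it stands, the proposal replaces the hard part of the lemma with an assertion.
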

\begin{proof}
By Lemma \ref{trivial pairs}, we have a double covering all of whose characteristic $I$-pairs are product $I$-bundles.
Therefore, we may assume that $M$ satisfies the first condition (A) of \lq strong untiwistedness', and we shall construct a covering satisfying the second condition.

To construct such a covering, let us analyse how this second condition (B) can fail to hold. 
Let $d\subset\partial M$ be a simple closed curve, and let $W$  a characteristic $I$-pair. 
Since no components of $W\cap\partial M$ are  annuli, $d$ can be homotoped on $\partial M$ into at most two components of $W \cap \partial M$. 
Furthermore, if $d$ can be homotoped into two such components, then $d$ lies (up to isotopy on $\partial M$) on a component $T_j$ (characteristic solid torus or a characteristic thickened torus) of $T$, and  there are two possibilities:
(a)  $T_j \cap \partial M$ is an annulus when $T_j$ is a solid torus, and is the union of an annulus and a torus when $T_j$ is a thickened torus; or
(b) $d$ separates two consecutive components of $T_j\cap\overline{M\setminus T}$ both lying in the same characteristic $I$-pair. 
We shall show that we can take a finite-sheeted covering of $M$ so that neither (a) nor (b) can happen.

First, we consider the condition  (a).
Let $T_j$ be a component of $T$ such that $T_j \cap \partial M$ is an annulus (and $T_j$ is a solid torus) or the union of an annulus and a torus (when $T_j$ is a thickened torus).
This implies that $T_j \cap \overline{M \setminus T_j}$ is  connected, hence is an annulus, which we denote by $A$.
Since $T_j$ is a characteristic solid torus or characteristic thickened torus, the annulus $A$ is essential, and hence is not homotopic to $T_j \cap \partial M$ fixing the boundary.
Then, we can choose a simple closed curve $\alpha$, which is not contractible in $M$, on the component of $\partial T_j$ on which $d$ lies so that both $\alpha \cap A$ and $\alpha \cap \partial M$ are connected, \ie arcs.
Since $\pi_1(T_j)$ is either $\integers$ or $\integers \times \integers$, we can take a $k$-sheeted cyclic covering $\tilde T_j$ of $T_j$ so that $\alpha$ cannot be lifted homeomorphically, whereas the annulus $A$ is homeomorphically lifted.
(For instance, in the case when $\pi_1(T_j) \cong \integers$, we choose $k$ which is coprime with the element represented by $\alpha$.)
Then the preimage of  the annulus $A$ is $k$ copies of $A$, which we denote by $A_1, \dots, A_k$.
Let $C$ be $\overline{M \setminus T_j}$.
We prepare $k$ copies of $C$,  which we denote by $C_1, \dots , C_k$.
By pasting $C_j$ along $A_j$ to $\tilde T_j$, we can make a $k$-sheeted cyclic covering of $M$ in which $\tilde T_j$ does not satisfy the condition (a).
If there is another component $T_{j'}$ of $T$ with the condition (a), we repeat the same process for all the $k$ lifts of $T_{j'}$ at the same time.
Repeating the process, we get a finite-sheeted covering of $M$ in which there is no characteristic solid torus or a characteristic thickened torus with the condition (a).
We use the same symbol $M$ and $T$ for this finite-sheeted covering, abusing the notation.

Now we turn to  the condition (b). We consider three colours named the colour $0$, $1$ and $2$ and we choose a colour for each annulus of $T\cap \overline{M \setminus T}$ so that, on $\partial T$, no two consecutive annuli have the same colour. We take three copies of each component of $T$ and of $\overline{M \setminus T}$ which we name the lift $0$, $1$ and $2$. Consider a component $U$ of $T$, a component $V$ of $\overline{M \setminus T}$ and an annulus $E\subset U\cap V$ with the colour $k \in \{0,1,2\}$. For every $j\in\{0,1,2\}$, we glue the lift $j$ of $V$ to the lift $(j+k)$ mod $3$ of $U$ along the appropriate lifts of $E$. Using the same construction for each component of $T \cap \overline{M \setminus T}$, we get a triple cover $\hat M$ of $M$ in which any two consecutive components of $\hat T\cap\overline{\hat M\setminus\hat T}$ lie in different components of $\overline{\hat M\setminus\hat T}$. In particular there is no characteristic solid torus or characteristic thickened torus in $\hat M$ for which the condition (b) holds.

Thus, we have shown that by taking a finite-sheeted covering, we can make both of the situations (a) and (b) disappear, which means, as we saw above, that the covering is strongly untwisted.
\end{proof}

{\em \cref{covering,strongly untwisted} show that to prove \cref{main}, we have only to consider the case when $M$ is strongly untwisted.}


\subsection{Vertically extendible surfaces}
\label{sec: ext}
Let $M$ be an atoroidal Haken manifold as in \cref{main}.
Let $X$ be the characteristic submanifold of $M$.
Assume that every $I$-bundle in $X$ is a product $I$-bundle.

\begin{definition}
\label{extendible}
Given an incompressible subsurface $F\subset \partial M$, we say that $F$ is {\em one-time vertically extendible} if there is an incompressible surface $F^1\subset\partial M$
\begin{generalization}
(assuming that $F^1$ is disjoint from the torus components of $\partial M$ in the setting of \cref{more general})
\end{generalization}
 and an essential $I$-bundle $V_F\subset M$ with $V_F\cap\partial M=F\cup F^1$ 
and $F^1\subset\partial X$ up to isotopy. We call $F^1$ a {\em first elevation} of $F$.
\end{definition}


It follows from the definition of characteristic submanifold that there is an isotopy which takes $V_F$ into the characteristic submanifold $X$. From now on, we assume that if $F$ is one-time vertically extendible then $F\subset X$ and $V_F\subset X$.

We note  solid torus
\begin{generalization}
and thickened torus
\end{generalization} 
components in $X$
may add some complications in the case when $F$ is an annulus.
If $F$ is contained in such a component of $X$, there may be more than one possible first elevation (even up to isotopy) and the $I$-bundles corresponding to two disjoint annuli may intersect (even up to isotopy).\\

We now define multiple elevations by induction.
\begin{definition}
\label{extendibility}
Given an incompressible subsurface $F$ in $\partial M$ and $n\geq 2$, we say that $F$ is {\em $n$-time vertically extendible}   if there is an essential surface $F^1\subset\partial M$
\begin{generalization}
not contained in a torus boundary component
\end{generalization}
 and an essential $I$-bundle $V_F\subset M$ with $V_F\cap\partial M=F\cup F^1$ and $\iota(F^1)$ is $(n-1)$-time vertically extendible. An $(n-1)$-th elevation $F^n$ of $\iota(F^1)$ is defined to be an $n$-th elevation of $F$.
\end{definition}

We say that two multi-curves $c,d\subset\partial M$ {\em intersect minimally} if for every multicurves $c',d'$ homotopic to $c$ and $d$ respectively, $\sharp\{c\cap d\}\leq\sharp\{c'\cap d'\}$. Let $F,G\subset X\cap\partial M$ be two incompressible surfaces. We say that $F$ and $G$ {\em intersect minimally} if $\partial F$ intersects $\partial G$ minimally.

\begin{lemma}	\label{union cobounded}
Let $F,G\subset\partial M$ be connected incompressible subsurfaces which intersect minimally and are not disjoint. If $F$ and $G$ are $n$-time vertically extendible, then so is $F\cup G$.
\end{lemma}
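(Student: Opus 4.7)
The plan is to induct on $n$. Since $F$ and $G$ are incompressible subsurfaces of $\partial M$ which, after the isotopy convention adopted right after \cref{extendible}, lie in the characteristic submanifold $X$ and satisfy $F\cap G\neq\emptyset$, they lie in a common component $\Xi$ of $X$: distinct components of $X$ are disjoint up to isotopy, so if $V_F\subset X$ and $V_G\subset X$ met $\partial M$ in different components of $X$, then $F$ and $G$ would be forced to be disjoint.

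In the main case $\Xi$ is a characteristic $I$-pair, hence by hypothesis a product bundle $\hat\Xi\times[0,1]$ with $\partial I$-bundle $\hat\Xi\times\{0\}\sqcup\hat\Xi\times\{1\}\subset\partial M$. Since these two sides are disjoint in $\partial M$ while $F$ and $G$ are connected and overlap, both lie on the same side and can be identified with subsurfaces $F_0,G_0\subset\hat\Xi$; the elevations $F^1,G^1$ sit on the opposite side and are the vertical translates of $F_0,G_0$. Setting $V_{F\cup G}=(F_0\cup G_0)\times[0,1]$ yields an essential $I$-bundle with $V_{F\cup G}\cap\partial M=(F\cup G)\cup(F^1\cup G^1)$ and $F^1\cup G^1\subset\partial X$, so we may take $(F\cup G)^1=F^1\cup G^1$. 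This settles the base case $n=1$.

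For the inductive step, the vertical projection in $\hat\Xi\times[0,1]$ is a homeomorphism between the two sides, so $F^1$ and $G^1$ inherit the intersection pattern of $F$ and $G$: their boundaries intersect minimally in $\partial M$ (using incompressibility of each side) and $F^1\cap G^1\neq\emptyset$. Since $\iota$ is a homeomorphism of $\partial M$, the same properties persist for $\iota(F^1)$ and $\iota(G^1)$, which are moreover each $(n-1)$-time vertically extendible by \cref{extendibility} applied to $F$ and $G$. The inductive hypothesis then shows $\iota(F^1)\cup\iota(G^1)=\iota((F\cup G)^1)$ is $(n-1)$-time extendible, and hence by \cref{extendibility} that $F\cup G$ is $n$-time extendible.

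The remaining case is when $\Xi$ is a characteristic solid torus (or a characteristic thickened torus, in the generalised setting of \cref{more general}); then $F$ and $G$ are necessarily sub-annuli of a common annular component of $\Xi\cap\partial M$. Here lies the main technical obstacle: the elevation of an annulus in such a $\Xi$ is not canonical, as noted in the remark preceding \cref{extendibility}, so one must select elevations of $F$, $G$ and $F\cup G$ inside $\Xi$ which are simultaneously compatible with the $V_F,V_G$ already chosen to witness the $n$-time extendibility. Once a coherent choice of target annular components of $\Xi\cap\partial M$ is fixed, the natural $I$-bundle identification between those components again preserves the intersection pattern of the annuli, and the induction proceeds exactly as in the $I$-pair case.
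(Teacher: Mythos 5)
Your $I$-pair case coincides with the paper's argument, and the inductive step you spell out is the one the paper leaves implicit. The problem is the ``remaining case,'' which you frame as an unresolved technical obstacle and handle only with a vague gesture at choosing ``coherent elevations.'' The paper's proof contains the observation you are missing: the hypotheses already rule this case out. Since $F$ and $G$ are not disjoint, they lie in the \emph{same} component $H$ of $X\cap\partial M$; and if $H$ were an annulus, then $F$, $G$, and $H$ would all be annuli with parallel core curves, so $F$ and $G$ could be isotoped off each other --- contradicting the assumption that they intersect minimally and are not disjoint. Hence $H$ is not an annulus, so the component of $X$ containing $H$ is a characteristic $I$-pair rather than a solid torus (solid-torus components of $X$ meet $\partial M$ only in annuli, per item (b) of the definition of the characteristic submanifold in \cref{3-manifold}). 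With that observation your solid-torus paragraph is vacuous and can be deleted, and the rest of your argument stands; without it, the proof as written has a gap, since the hand-waving about ``coherent choice of target annular components'' does not actually produce the $I$-bundle $V_{F\cup G}$ in a solid-torus component (where elevations are genuinely non-unique and the $I$-bundles over distinct annuli may intersect, as the paper itself warns just before \cref{extendibility}).
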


\begin{proof}
If $F$ and $G$ are one-time vertically extendible, as was remarked before, we may assume that $F,G\subset X\cap\partial M$. 
Since they intersect minimally and are not disjoint, they must lie in the same component $H$ of $X\cap\partial M$ which is not an annulus. Then the component $V$ of $X$ containing $H$ is an $I$-bundle, which is a product $I$-bundle by assumption, and can be parametrised as $H\times[0,1]$.
	
Then, by moving $F$ and $G$ by isotopies, we have  $V_F=F\times [0,1]\subset H\times[0,1]$ and $V_G=G\times [0,1]\subset H\times[0,1]$, and $F^1=F \times\{0,1\} \setminus F,\  G^1=G\times\{0,1\} \setminus G$. 
Since $F^1$, resp. $G^1$, lies in the component of $X\cap\partial M$ which does not contain $F$ and $G$, $F^1$ and $G^1$ lie in the same component of $X\cap \partial M$.
Therefore  $F^1 \cup G^1$ lies in $X\cap\partial M$. Thus we have proved that if $F$ and $G$ are one-time vertically extendible then $F\cup G$ is also one-time vertically extendible and  $F_1\cup G_1$ is its first elevation.
	
The case of $n>1$ follows by induction.
\end{proof}

\begin{corollary}
For any natural number $n$, there is  an (possibly empty) incompressible surface $\Sigma^n\subset X\cap\partial M$ such that each component of $\Sigma^n$ is $n$-time vertically extendible, no component of $\Sigma^n$ can be isotoped on $\partial M$ into another component of $\Sigma^n$ and every  $n$-time vertically extendible surface can be isotoped into $\Sigma^n$.
\end{corollary}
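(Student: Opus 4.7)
The plan is to define $\Sigma^n$ as a disjoint union of representatives of the maximal classes in the poset $(\mathcal{M},\preceq)$, where $\mathcal{M}$ is the set of isotopy classes on $\partial M$ of connected incompressible $n$-time vertically extendible subsurfaces of $X\cap\partial M$, and $[F]\preceq[G]$ when $F$ can be isotoped into $G$. Since every incompressible subsurface $F\subset\partial M$ satisfies $|\chi(F)|\leq|\chi(\partial M)|$, every ascending chain in $(\mathcal{M},\preceq)$ stabilises, so every class lies below some maximal class. I write $\mathcal{M}_{\max}\subset\mathcal{M}$ for the set of maximal classes.

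The key step, in which \cref{union cobounded} is used, is the claim that any two distinct classes $[F_1],[F_2]\in\mathcal{M}_{\max}$ admit disjoint representatives. I would choose representatives in minimal position and observe that if $F_1\cap F_2\neq\emptyset$, then \cref{union cobounded} makes $F_1\cup F_2$ into a connected incompressible $n$-time vertically extendible subsurface whose isotopy class lies above both $[F_1]$ and $[F_2]$ in $\mathcal{M}$. By maximality this forces $[F_1\cup F_2]=[F_1]=[F_2]$, contradicting distinctness. Hence minimal-position representatives of distinct classes in $\mathcal{M}_{\max}$ are disjoint, and in particular $\mathcal{M}_{\max}$ is finite, since a pairwise disjoint, pairwise non-isotopic family of essential subsurfaces of $\partial M$ has cardinality bounded in terms of $|\chi(\partial M)|$. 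I then choose simultaneously disjoint representatives $F_1^*,\ldots,F_k^*$ and set $\Sigma^n:=F_1^*\cup\cdots\cup F_k^*$.

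The three required properties are then immediate: each $F_i^*$ is $n$-time vertically extendible by the definition of $\mathcal{M}_{\max}$; no $F_i^*$ isotopes into another $F_j^*$ by maximality together with distinctness; and every $n$-time vertically extendible subsurface isotopes into $\Sigma^n$ component-by-component, since each of its connected components lies below some $[F_i^*]\in\mathcal{M}_{\max}$ and isotopies of disjoint components can be performed jointly. The main subtlety I expect is the handling of annular subsurfaces of $X\cap\partial M$ lying inside a solid-torus or thickened-torus component of $X$, where first elevations are not unique (as already remarked after \cref{extendible}); one must make consistent choices so that \cref{union cobounded} really yields a well-defined isotopy class for $F_1\cup F_2$. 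Since the whole construction is up to isotopy on $\partial M$, this amounts to a formal bookkeeping issue rather than a substantive obstruction.
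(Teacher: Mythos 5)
Your proposal is correct and is essentially the paper's argument in a different guise: both hinge on \cref{union cobounded} applied to minimal-position, non-disjoint representatives, the paper via a greedy enlargement that terminates by a complexity count and you via maximal elements of the poset of isotopy classes. One small point: a strictly ascending chain can keep $\chi$ constant (e.g.\ a four-holed sphere essentially contained in a two-holed torus), so to see that chains stabilise you should track the number of boundary components (or the annuli being attached) in addition to the Euler characteristic, exactly as in the paper's termination argument.
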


\begin{proof}
If there is no surface that is $n$-time vertically extendible, we set $\Sigma^n$ to be $\emptyset$. 
Otherwise, let $\Sigma \subset X$ be an $n$-time vertically extendible incompressible surface.
 If every $n$-time vertically extendible surface can be isotoped into $\Sigma$, we are done, by taking $\Sigma^n=\Sigma$. 
	
Otherwise, there is an $n$-time vertically extendible surface $F$ which cannot be isotoped into $\Sigma$. 
Moving $F$ by an isotopy we can assume that $F$ intersects $\Sigma$ minimally. 
By Lemma \ref{union cobounded}, each connected component of $F\cup \Sigma$ is $n$-time vertically extendible, and we replace $\Sigma$ with $\Sigma \cup F$, and call this enlarged surface $\Sigma$. 
We repeat this operation as long as there is an $n$-time vertically extendible surface which cannot be isotoped into $\Sigma$.
 Every time we add a surface, either we decrease the Euler characteristic of $\Sigma$ or we add a disjoint annulus which cannot be isotoped into $\Sigma$. Hence this process must terminate after finitely many steps. The final resulting surface is $\Sigma^n$.	
\end{proof}

Since an $n$-time vertically extendible surface is $m$-vertically extendible for any $m\leq n$, we have $\Sigma^n\subset \Sigma^m$ up to isotopy.\\

In the next lemma we show that, when $N$ is atoroidal, $M$ cannot contain an $n$-time extendible surface for sufficiently large $n$. In the last section, this result will lead us to the constant $n$ of Theorem \ref{main}.


\begin{lemma}
\label{non-extendible}
There is $L$ depending only on the topological type of $\partial M$ such that if there is an $L$-time vertically extendible surface, then $N$ is not atoroidal.
\end{lemma}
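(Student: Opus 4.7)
I plan to prove the contrapositive: for $L$ depending only on the topological type of $\partial M$, the existence of an $L$-time vertically extendible surface yields a non-peripheral incompressible torus in $N$, contradicting atoroidality.

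First I would reduce to the case where $F$ is a single essential annulus: the $I$-bundle $V_F$ restricts to a regular neighborhood in $F$ of any essential simple closed curve, so this annulus is also $L$-time extendible. Thereafter all elevations $F^i$ are annuli with cores $c_i$, and each $V_i$ is topologically a solid torus sitting inside a characteristic $I$-pair (a product bundle by strong untwistedness) or a characteristic solid/thickened torus. The identifications $x\sim\iota(x)$ glue the top face $F^{i+1}\subset V_i$ to the bottom face $\iota(F^{i+1})\subset V_{i+1}$, so concatenating the $V_i$'s in $N$ yields an immersed annulus $\Phi\colon S^1\times[0,L]\to N$ with $\Phi|_{S^1\times\{k\}}$ tracing $c_k$. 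In particular all $c_i$ share a single free homotopy class in $\pi_1(N)$.

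Second I would argue that the isotopy classes on $\hat\Sigma=\partial M/\iota$ of the $c_i$ form a finite set bounded by the topology of $\partial M$. Assuming for contradiction that $N$ is atoroidal, the JSJ decomposition of $N$ has only finitely many essential annuli up to isotopy, and their count is controlled by the topology of $\hat\Sigma$, hence of $\partial M$. By a standard consequence of Waldhausen--Johannson rigidity, two simple closed curves on the incompressible surface $\hat\Sigma\subset N$ in the same free homotopy class of $\pi_1(N)$ are related by a sequence of essential annuli, so the number of isotopy classes on $\hat\Sigma$ in a given free homotopy class is bounded in terms of $|\chi(\partial M)|$. Pigeonhole then supplies, for $L$ exceeding this bound, indices $i<j$ with $c_i$ isotopic to $c_j$ on $\hat\Sigma$.

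Third, closing up $\Phi|_{S^1\times[i,j]}$ along the isotopy from $c_j$ to $c_i$ on $\hat\Sigma$ produces an immersed torus $T\subset N$. One generator of $\pi_1(T)$ is $c_i$, non-trivial in $\pi_1(N)$ because $c_i$ is essential on $\hat\Sigma$ and $\hat\Sigma$ is incompressible in $N$ (inherited from incompressibility of $\partial M$ in $M$). The other generator $\beta$ traces fibers of $V_i,\ldots,V_{j-1}$ and closes via the isotopy on $\hat\Sigma$; since $\hat\Sigma$ is non-separating in $N$ (as $N\setminus\hat\Sigma=\Int M$ is connected) and $\beta$ crosses $\hat\Sigma$ transversely at each interior junction, the algebraic intersection $\beta\cdot\hat\Sigma$ is nonzero, whereas $c_i\subset\hat\Sigma$ has zero intersection with $\hat\Sigma$; hence $c_i$ and $\beta$ are independent in $H_1(N)$. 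They commute on $T$ by construction, so $\pi_1(T)\hookrightarrow\pi_1(N)$ onto a $\integers\times\integers$ subgroup, making $T$ $\pi_1$-injective. As $N$ is closed, $T$ cannot be boundary-parallel, so $T$ is essential, contradicting atoroidality.

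The principal obstacle is quantifying the number of isotopy classes on $\hat\Sigma$ in a given free homotopy class of $\pi_1(N)$ purely in terms of $|\chi(\partial M)|$; this rests on JSJ-theoretic finiteness for the atoroidal Haken manifold $N$ together with a bound on how essential annuli of $N$ can meet $\hat\Sigma$. A secondary issue is ensuring the algebraic intersection $\beta\cdot\hat\Sigma$ is nonzero in $\integers$ rather than merely modulo $2$ (signs could cancel because $\iota$ is orientation-reversing); this is resolved by choosing the pigeonhole indices with $j-i$ of appropriate parity, at the cost of at most doubling the bound on $L$.
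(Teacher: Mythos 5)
Your overall strategy---concatenate the $I$-bundles into a long annulus in $N$, pigeonhole two of the elevated curves into the same isotopy class, close up to a torus, and derive a $\integers\times\integers$ in $\pi_1(N)$---matches the paper's in outline, and your third step (nontriviality via intersection numbers with $\hat\Sigma$ plus the torus theorem) is a workable, if immersed rather than embedded, substitute for the paper's infinite-cyclic-cover argument. But your second step contains a genuine gap, and it is exactly the crux of the lemma. You assert that the number of isotopy classes of simple closed curves on $\hat\Sigma$ lying in a fixed free homotopy class of $\pi_1(N)$ is bounded in terms of $|\chi(\partial M)|$ by ``a standard consequence of Waldhausen--Johannson rigidity'' and by finiteness of essential annuli in the JSJ of $N$. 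No such off-the-shelf bound exists: $N$ is closed, so it has no essential (properly embedded) annuli at all, and the relevant objects are essential annuli in $M$ crossed with the gluing $\iota$; but the characteristic submanifold of $M$ generally contains \emph{infinitely} many isotopy classes of essential annuli (e.g.\ $c\times I$ for every essential curve $c$ in the base of a product $I$-pair), so each elevation step can change the isotopy class of $c_i$ on $\partial M$, and a priori all the $c_i$ could be pairwise non-isotopic. Bounding the length of such chains is precisely what the lemma claims, so your step 2 is circular.

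The paper closes this gap with machinery you discarded when you reduced to a single annulus: the canonical maximal $n$-time extendible surfaces $\Sigma^n$. These decrease with $n$ and can strictly decrease at most $K=3g-3$ times, so there is a window of length $2K$ in which $\Sigma^n$ is constant; within that window the elevations of a minimal-complexity component are forced (by maximality and minimality) to be components of the \emph{fixed} surface $\Sigma^n$, whose boundary has at most $2K$ components. The pigeonhole is then applied to actual components of $\partial\Sigma^n$, not to an uncontrolled set of isotopy classes, yielding $L=2K^2$. Without some version of this stabilization argument (or another mechanism forcing recurrence of the boundary curves), your proof does not go through.
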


\begin{proof}
Letting $g$ denote the genus of $\partial M$, we set  $K=3g-3$,  which is the number of curves in a pants decomposition of $\partial M$. 
Since no components of $\Sigma^n$ can be isotoped into another component, $\partial \Sigma^n$ has at most $2K$ boundary components. Using this observation, we show in the following claim that $\Sigma^{n+2K}$ must be a proper subsurface of  $\Sigma^n$ even up to isotopy.

\begin{claim}\label{long bundle}
For any  $n\in\naturals$, if $\Sigma ^{n}$ is non-empty  and  any component of $\Sigma^{n}$ can be isotoped into $\Sigma^{n+2K}$, then $N$ cannot be atoroidal.
\end{claim}

\begin{proof}
Suppose that  $\Sigma^{n}\neq\emptyset$, and that any component of $\Sigma^{n}$ can be isotoped into $\Sigma^{n+2K}$.
Since $\Sigma^{n+j}$ is contained in $\Sigma^n$ for any $j\geq 0$ up to isotopy as observed above, and no component of $\Sigma^{n+j}$ can be isotoped into another component,  we have then $\Sigma^{n+j}=\Sigma^{n}$ for any $j\leq 2K$ up to isotopy.
Let $F$ be a component of $\Sigma^{n+2K}$ with minimal Euler characteristic, and $F^j$ its $j$-th elevation. 
By definition, $\iota(F^j)$ is $(n+2K-j)$-time vertically extendible for any $j\leq 2K$. 
Therefore $\iota(F^j)$ can be isotoped into $\Sigma^{n}$. Since $\Sigma^{n}=\Sigma^{n+2K}$ and $F$ has minimal Euler characteristic, $\iota(F^j)$ is a component of $\Sigma^{n}$, up to isotopy. In particular $\partial(\iota(F^j))\subset \partial \Sigma^{n}$ up to isotopy. 
	
Let $V^j$ be the $I$-bundle cobounded by $\iota(F^{j-1})$ and $F^j$. 
We note that by definition, $F^j$ and $\iota(F^j)$ are identified in $N$ and that the interior of $V^j$ is embedded in $N$.  
Taking the union of  the $I$-bundles $V^j$  in $N$ for $j\leq 2K$, we get a map $F\times [0,2K]\rightarrow N$ such that $F\times\{j\}$ is sent to $F^j$. 
Let $c$ be a component of $\partial F$. 
The image of the annulus $c\times [0,2K]$ goes $2K+1$ times through $\partial \Sigma^{n}$. Since $\partial \Sigma^{n}$ has at most $2K$ components, there is a component $c'$ of $\partial \Sigma^{n}$ through which $c \times [0,2K]$ goes at least twice. The image of the part of this annulus between two such instances forms a torus $T$ embedded in $N$. Considering the component of $\partial M$ through which $T$ goes, we can construct an infinite cyclic covering of $N$ in which $T$ lifts to an infinite incompressible annulus. It follows that $T$ is incompressible and non-peripheral. 
Hence $N$ is not atoroidal.
\end{proof}

As mentioned before, we have $\Sigma^n\subset \Sigma^m$ for any $m\leq n$. 
Consider monotone increasing indices $n_j$ such that $\Sigma^{n_j+1}$ is smaller than $\Sigma^{n_j}$ in the sense that at least one component of $\Sigma^{n_j}$ cannot be isotoped into $\Sigma^{n_j+1}$. Since no component of $\Sigma^n$ can be isotoped into another component, we have then either $\chi(\Sigma^{n_j+1})>\chi(\Sigma^{n_j})$ or $\Sigma^{n_j+1}$ has fewer connected components than $\Sigma^{n_j}$. 
It follows that there are at most $K$ such $n_j$, namely, there is $J\leq K$ such that for any $n\geq n_J+1$, we have $\Sigma^n=\Sigma^{n+1}$. By Claim \ref{long bundle}, if $n_j-n_{j-1}\geq 2K$ for some $j\leq J$ or if $\Sigma^{n_J}\neq\emptyset$, then $N$ is not atoroidal. 
Since $J\leq K$, we can now conclude the proof just by setting $L=2K^2$.
\end{proof}

\section{Convergence, divergence and subsurface projections}	\label{conv-div}

In this section, we shall review the relations between the invariant $m$ mentioned in the introduction and the convergence and divergence of Fuchsian and Kleinian groups.

\subsection{Subsurface projections and Fuchsian groups}
We first recall the definition of the invariant $m$ from \cite{BBCL}, and see how it controls the behaviour of sequences of Fuchsian groups.

\begin{definition}	\label{invariant m}
	Let $S$ be a (possibly disconnected) closed surface of genus at least $2$
	\begin{generalization}
	or a finite-area hyperbolic surface with punctures
	\end{generalization}
	and $g$ a point in its Teichm\"uller space.
	Regarding $g$ as a hyperbolic structure on $S$, we let  $\mu(g)$ be a shortest marking for $(S,g)$ (See \cref{cc}).
	Although there might be more than one shortest markings, its choice does not matter for our definition and arguments.
	We fix a full and clean marking $\mu$ consisting of a pants decomposition and transversals on $S$ independent of $g$.
	For any essential simple closed curve $d$ on $S$, we define $$m(g,d,\mu)=\max\left\{\sup_{\substack{Y : \ d\subset\partial Y}}d_Y(\mu(g),\mu),{1\over \len_{g}(d)}\right\},$$
	where the supremum of the first term in the maximum is taken over all incompressible
	subsurfaces $Y$ of $S$ whose boundaries $\partial Y$ contain  $d$.
\end{definition}

It follows from \cite[Lemma 5.2]{BBCL} that two curves with unbounded $m$ cannot intersect:

\begin{lemma} \label{lemma:multicurve m unbounded}
Let $\{m_i\}$ be a sequence in $\teich(S)$ and let $c_1,c_2$ be simple closed curve on $S$. If $m(m_i,c_i,\mu)\longrightarrow\infty$ for both $i=1,2$, then $i(c_1,c_2)=0$.
\end{lemma}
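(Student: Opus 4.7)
The plan is to split, after extracting a subsequence, based on which term in the maximum defining $m(m_i,c_j,\mu)$ is responsible for the blow-up for each $j\in\{1,2\}$. By the formula for $m$, for each fixed $j$, after passing to a subsequence either (a) $\ell_{m_i}(c_j)\to 0$, or (b) there is a sequence of incompressible subsurfaces $Y_i^j\subset S$ with $c_j\subset\partial Y_i^j$ and $d_{Y_i^j}(\mu(m_i),\mu)\to\infty$.

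In the symmetric case where alternative (a) holds for both $j=1,2$, I would appeal to the collar lemma: once $\ell_{m_i}(c_1)$ and $\ell_{m_i}(c_2)$ fall below the Margulis constant, the standard collar neighborhoods of $c_1$ and $c_2$ in $(S,m_i)$ are disjoint embedded annuli, and since each $c_j$ is represented geodesically inside its collar, the two geodesics cannot cross. Hence $i(c_1,c_2)=0$.

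The remaining work is the reduction of alternative (b) to alternative (a), and this is where I would apply \cite[Lemma 5.2]{BBCL}. Its content in our situation is that a sequence of subsurfaces whose boundary contains $c_j$ and whose projection distance $d_{Y_i^j}(\mu(m_i),\mu)$ is unbounded forces $\ell_{m_i}(c_j)\to 0$, possibly after passing to a further subsequence. Morally this rests on the Rafi/Minsky principle that large subsurface projection distances along a sequence in $\teich(S)$ are witnessed by a boundary component of the projecting subsurface becoming short; the key point is that in the definition of $m$ the supremum is restricted to subsurfaces containing the fixed curve $c_j$ in the boundary, which is what pins the shortening onto $c_j$ itself rather than onto some other boundary component. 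Applying this reduction to whichever of $c_1,c_2$ falls into alternative (b) returns us to the symmetric case above.

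The main obstacle is exactly this last point: $\partial Y_i^j$ may have several components, and a priori one of them other than $c_j$ could be the one whose length collapses. The argument must use the asymmetry built into the definition of $m$---that $c_j$ appears as a distinguished boundary component of every $Y_i^j$ in the supremum---together with the control provided by \cite[Lemma 5.2]{BBCL} to select $c_j$ as the short curve. Once this is settled, the collar lemma step above completes the proof with no further difficulty.
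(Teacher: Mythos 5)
Your collar-lemma step for the case where both curves become short is fine, but the reduction of alternative (b) to alternative (a) is a genuine gap: the implication you attribute to \cite[Lemma 5.2]{BBCL} --- that a sequence of subsurfaces $Y_i$ with $c_j\subset\partial Y_i$ and $d_{Y_i}(\mu(m_i),\mu)\to\infty$ forces $\len_{m_i}(c_j)\to 0$ --- is false. Take $m_i=T_{c_j}^i(m_0)$ for a fixed structure $m_0$ and the Dehn twist $T_{c_j}$ about $c_j$: the projection distance to the annulus with core $c_j$ blows up, yet $\len_{m_i}(c_j)=\len_{m_0}(c_j)$ is constant. The same happens for non-annular $Y$ with $c_j\subset\partial Y$ under powers of a partial pseudo-Anosov supported on $Y$. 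Large subsurface projection between the endpoints of a sequence in Teichm\"uller space is witnessed by $\partial Y$ becoming short somewhere along the connecting geodesics (Rafi), not at the points $m_i$ themselves, so there is no way to ``pin the shortening onto $c_j$'' and your case analysis never returns to the symmetric collar-lemma situation.

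The argument that actually works in case (b) stays entirely in the subsurface-projection world rather than passing through lengths. If $i(c_1,c_2)>0$ and $c_1\subset\partial Y_i$, $c_2\subset\partial Z_i$ with both projection distances to $\mu(m_i)$ versus $\mu$ blowing up, then $Y_i$ and $Z_i$ overlap, so Behrstock's inequality applied to the marking $\mu(m_i)$ gives (say) $d_{Y_i}(\mu(m_i),\partial Z_i)$ uniformly bounded, hence $d_{Y_i}(c_2,\mu)\to\infty$; but $c_2$ and $\mu$ are fixed and $c_2$ cuts $Y_i$ (it crosses $c_1\subset\partial Y_i$), so $d_{Y_i}(c_2,\mu)\leq 2\,i(c_2,\mu)+1$ by Masur--Minsky, a contradiction. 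The mixed case (one curve short, the other with large projections) is handled similarly, using that a very short curve lies in $\base(\mu(m_i))$. For what it is worth, the paper does none of this explicitly: its proof is a one-line observation that the statement is the Fuchsian special case of \cite[Lemma 5.2]{BBCL}, where the hypothesis on bounded projections of end invariants is vacuous. So your instinct to lean on that lemma is right, but you have misquoted what it gives you, and the misquoted version is where your proof breaks.
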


\begin{proof}
This is just a special case of \cite[Lemma 5.2]{BBCL} for Fuchisan groups.
We note that the assumption of bounded projections of end invariants is unnecessary in this special case for which end invariants are empty.
\end{proof}

The invariant $m$ is related to the divergence and convergence of a sequence by the following lemma:

\begin{lemma}	\label{no parabolics=bounded}
Let $\mu$ be a full marking on $S$, and let $\{m_i\}$ be a sequence in $\teich(S)$. Then every subsequence of $\{m_i\}$ contains a convergent subsequence if and only if $\{m(m_i,c,\mu)\}$ is bounded for every essential simple closed curve $c$ on $S$.
\end{lemma}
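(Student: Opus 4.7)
\emph{Forward direction.} If $\{m_i\}$ is precompact, I pass to a subsequence converging to some $m_\infty\in\teich(S)$. For each simple closed curve $c$, the length $\ell_{m_i}(c)$ converges to $\ell_{m_\infty}(c)>0$, giving a uniform bound on $1/\ell_{m_i}(c)$. Since on a compact neighbourhood of $m_\infty$ only finitely many isotopy classes of simple closed curves have length below the Bers constant, the shortest marking $\mu(m_i)$ takes values in a finite set of isotopy classes eventually. For each such fixed marking $\alpha$, the supremum $\sup_Y d_Y(\alpha,\mu)$ over all essential subsurfaces of $S$ is finite, by Masur--Minsky's bounded geodesic image theorem together with the finiteness of large subsurface projections between a fixed pair of markings. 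Combining these gives a uniform bound on $m(m_i,c,\mu)$.

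\emph{Converse direction.} Suppose $m(m_i,c,\mu)$ is bounded for every $c$ but $\{m_i\}$ is not precompact; the plan is to exhibit a curve $c$ with $m(m_i,c,\mu)\to\infty$ along a subsequence, contradicting the hypothesis. If along some subsequence a fixed $c$ satisfies $\ell_{m_i}(c)\to 0$, then $1/\ell_{m_i}(c)\to\infty$ immediately contradicts the bound, so one may assume every length function is bounded below. In particular the systole is uniformly bounded below, and by Mumford's compactness theorem $\{m_i\}$ projects into a compact subset of the moduli space $\teich(S)/\mathrm{MCG}(S)$. Writing $m_i=\phi_i\cdot m_i^*$ with $m_i^*$ converging in a compact fundamental domain and $\phi_i\in\mathrm{MCG}(S)$, the non-precompactness of $\{m_i\}$ in $\teich(S)$ forces $\phi_i\to\infty$ in $\mathrm{MCG}(S)$. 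Since $\mathrm{MCG}(S)$ acts properly discontinuously on the space of markings, the shortest markings $\mu(m_i)$ diverge, and by Masur--Minsky's distance formula there must exist essential subsurfaces $Y_i$ of $S$ with $d_{Y_i}(\mu(m_i),\mu)\to\infty$.

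Each $\partial Y_i$ has at most a bounded number of components depending only on the topology of $S$, and by passing to a further subsequence --- using the Nielsen--Thurston classification of the diverging mapping classes $\phi_i$, which localizes the divergent subsurface projections to a canonical finite multicurve (the canonical reducing system in the reducible case, the axis data in the pseudo-Anosov case) --- one arranges that a fixed simple closed curve $c$ satisfies $c\subset\partial Y_i$ for infinitely many $i$. Then $m(m_i,c,\mu)\geq d_{Y_i}(\mu(m_i),\mu)\to\infty$, the desired contradiction. The main obstacle is this last extraction: identifying a common boundary curve among the subsurfaces that carry the divergent projections. The delicate case is when $m_i$ diverges along a pseudo-Anosov direction, where the divergence is detected only by annular projections around curves near the limit in $\mathcal{PML}(S)$; but in every case the hierarchical structure of the Masur--Minsky machinery, combined with the bounded-$m$ hypothesis, pins down some fixed boundary curve $c$ realizing the blow-up.
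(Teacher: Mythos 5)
Your forward direction is fine and matches the paper's: precompactness forces the shortest markings $\mu(m_i)$ to range over a finite set with bounded length, and all subsurface coefficients between two fixed markings are uniformly bounded (this is \cite[Lemma 2.1]{MM}, a bound by intersection number, rather than the bounded geodesic image theorem, but the point stands).

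The converse direction, however, has a genuine gap at exactly the step you yourself flag as ``the main obstacle,'' and that step is the entire content of the lemma. The distance formula only shows that the coefficients $d_{Y}(\mu(m_i),\mu)$ cannot be uniformly bounded over all $i$ and $Y$; it hands you subsurfaces $Y_i$ \emph{varying with $i$} (and possibly equal to the whole surface $S$, which contributes the top term of the distance formula and has no boundary curve at all). To contradict the hypothesis you need a single fixed curve $c$ with $c\subset\partial Y_i$ for infinitely many $i$, and since the isotopy classes of the components of $\partial Y_i$ change with $i$, pigeonholing on their number produces nothing. The appeal to the Nielsen--Thurston type of $\phi_i$ and to ``the hierarchical structure of the Masur--Minsky machinery'' is an assertion, not an argument, and the pseudo-Anosov case you single out is precisely where it breaks down: if $m_i=\phi^i m_0$ for a pseudo-Anosov $\phi$ whose Teichm\"uller axis stays in the thick part of moduli space, then the systole is constant and all proper subsurface coefficients $d_Y(\mu(m_i),\mu)$ (annular ones included) remain uniformly bounded -- the divergence is carried entirely by $d_S$ -- so there is no fixed boundary curve ``realizing the blow-up'' in the way you describe, and no finite canonical multicurve to localise to. Handling (or excluding) exactly this situation is what the paper delegates to \cite[Lemma 2.3]{BBCL}: that lemma asserts that when $\{\mu(m_i)\}$ is infinite one may pass to a subsequence along which a \emph{fixed} subsurface $Y$ satisfies $d_Y(\mu(m_i),\mu)\to\infty$, after which any component of $\partial Y$ serves as $c$. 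Your proposal reproves the easy surrounding steps but does not supply a proof of that extraction, which is the one nontrivial ingredient.
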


\begin{proof}
Let $\mu_i$ be a shortest marking for $m_i$. It follows from classical results on Fenchel-Nielsen coordinates that any subsequence of $\{m_i\}$ contains a converging subsequence if and only if the sequence $\{\mu_i\}$ is a finite set and $\{\len_{m_i}(\mu_i)\}$ is bounded.
By \cite[Lemma 2.3]{BBCL}, the sequence $\{\mu_i\}$ is infinite if and only if passing to a subsequence, there is  an incompressible subsurface $Y$ such that $d_Y(\mu_i,\mu)\longrightarrow \infty$ (and hence $m(m_i,c,\mu)\longrightarrow\infty$ for any component $c$ of $\partial Y$). 

On the other hand, if the sequence  $\{\mu_i\}$ is constant, then $\len_{m_i}(\mu_i)$ is unbounded if and only if passing to a subsequence, there is a curve $c$ with  $\len_{m_i}(c)\longrightarrow 0$ (and hence $m(m_i,c,\mu)\longrightarrow\infty$). 
\end{proof}

\subsection{Relative convergence of Kleinian groups}

We shall next establish a necessary condition on the invariant $m$ for algebraic convergence on a submanifold. 
We start with a fundamental result.
Thurston proved in \cite{Th3} the following which is the first half of the theorem often referred to as the \lq broken window only' theorem.
We note that the latter half of the broken window only theorem should need some rectification (see \cite{OhH}) but is irrelevant to the present paper.

\begin{theorem}
\label{broken window}
Let $M$ be an atoroidal Haken 3-manifold and $X$ its characteristic submanifold.
Then for any curve $\gamma$ in $M \setminus X$ and any sequence $\{\rho_i \in \AH(M)\}$, the length of the closed geodesic in $\hyperbolic^3/\rho_i(\pi_1(M))$ representing the free homotopy  class of $\rho_i(\gamma)$ is bounded as $i \longrightarrow \infty$.
\end{theorem}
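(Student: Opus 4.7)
My plan is to prove the theorem by contradiction, using the Bestvina--Paulin compactification of $\AH(M)$, the Rips--Bestvina--Feighn machine for small actions on $\reals$-trees, and the Jaco--Shalen--Johannson theory recalled in \cref{3-manifold}.

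First I would assume, toward a contradiction, that $\gamma\subset M\setminus X$ and $\{\rho_i\}\subset\AH(M)$ satisfy $\len_{\rho_i}(\gamma)\to\infty$. Set $\lambda_i=\len_{\rho_i}(\gamma)$; by Margulis-type lower bounds on translation lengths of primitive elements of torsion-free discrete subgroups of $\PSL$, the rescaled lengths $\len_{\rho_i}(s)/\lambda_i$ of elements $s$ in a fixed finite generating set are uniformly bounded above. After conjugating each $\rho_i$ so that base points remain in a compact region, I would apply the Bestvina--Paulin compactness argument to extract a subsequence for which the rescaled pointed actions $(\hthree,\tfrac{1}{\lambda_i}d,\rho_i)$ converge in the equivariant Gromov--Hausdorff sense to a nontrivial minimal isometric action of $\pi_1(M)$ on an $\reals$-tree $T$ with $\len_T(\gamma)=1$.

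The next step is to analyse this limit action. Arc stabilizers of such a limit are virtually abelian; since $M$ is atoroidal with no torus boundary components, $\pi_1(M)$ contains no $\integers\oplus\integers$ subgroup, so arc stabilizers are cyclic. The Rips machine then decomposes $T$ as a graph of actions whose vertex actions are simplicial, axial, or of surface type. Via Skora's realisation theorem together with Otal's dual-lamination machinery, each simplicial edge of the decomposition produces an essential annulus in $M$, and each surface-type piece produces an essential subsurface of $M$ equipped with a measured lamination supported by an essential $I$-bundle. By the JSJ theory (together with atoroidality, which rules out tori), every such essential annulus and every such essential $I$-bundle region can be isotoped into the characteristic submanifold $X$.

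Finally, I would transfer this topological information back to the tree action. A curve $\gamma$ contained in $M\setminus X$ can be homotoped to be disjoint from every essential annulus in $X$ and from every essential $I$-bundle region of $X$, hence it is disjoint, up to homotopy, from every edge of the simplicial part of the decomposition of $T$ and from every surface piece; the Bass--Serre-type analysis then forces $\gamma$ to act elliptically on $T$, giving $\len_T(\gamma)=0$ and contradicting $\len_T(\gamma)=1$. The principal technical obstacle is making the correspondence between the pieces produced by the Rips machine and the actual JSJ pieces of $M$ entirely rigorous, in particular handling the solid-torus and thickened-torus components of $X$ (which can accommodate several embedded essential annuli sharing the same homotopy class) and checking that surface-type pieces of $T$ really do embed via Skora into essential $I$-bundle regions contained in $X$. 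Once this is done, the assumption $\len_{\rho_i}(\gamma)\to\infty$ is refuted and the uniform boundedness claimed in the theorem follows.
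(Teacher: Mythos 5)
The paper does not prove this statement: it is quoted as the first half of Thurston's ``broken windows only'' theorem, with a pointer to \cite{Th3}, and no argument is given. So there is no in-paper proof to compare against; what you have written is closer in spirit to the Morgan--Shalen/Rips-machine reconstruction of Thurston's argument. The overall architecture (degenerate to an $\reals$-tree, decompose it, match the pieces to the JSJ decomposition, conclude that curves outside $X$ act elliptically) is the right skeleton.

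However, there is a genuine gap, and it is earlier than the one you flag. Your contradiction hinges on having a limit tree $T$ with $\len_T(\gamma)=1$, obtained by rescaling by $\lambda_i=\len_{\rho_i}(\gamma)$. For that rescaling to produce a convergent action you would need the rescaled generator lengths $\len_{\rho_i}(s)/\lambda_i$ to be bounded, and the ``Margulis-type lower bound'' you cite does not give this: a lower bound on translation lengths cannot yield an \emph{upper} bound on $\len_{\rho_i}(s)/\lambda_i$, and in general a generator's length can grow much faster than $\lambda_i$. If instead you rescale by the correct quantity (the displacement $\mu_i$, as Bestvina--Paulin actually requires), you do get a tree $T$, but then $\ell_T(\gamma)=\lim \len_{\rho_i}(\gamma)/\mu_i$ and the topological analysis only yields $\ell_T(\gamma)=0$. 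That is not a contradiction; it merely says $\len_{\rho_i}(\gamma)=o(\mu_i)$, which is perfectly compatible with $\len_{\rho_i}(\gamma)\to\infty$. To close the argument one must either iterate the degeneration on the vertex stabilizer of the point fixed by $\gamma$ (a descent on the complexity of the manifold) or invoke a relative compactness statement for the acylindrical pieces of $M\setminus X$ -- which is essentially Thurston's relative boundedness theorem \cite[Theorem 3.1]{Th3}, the companion result the present paper cites in the proof of its Theorem \ref{relative convergence}. As written, the proposal establishes only the weaker ``sublinear in $\mu_i$'' statement, not the claimed uniform bound.

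Two secondary points. The ``no $\integers\oplus\integers$'' step quietly assumes $M$ has no torus boundary components; the theorem as stated in the paper does not make that assumption, so either restrict the statement (as the intended application permits) or handle the rank-$2$ arc stabilizers coming from peripheral tori. And the obstacle you do flag -- matching Rips-machine pieces to JSJ pieces and controlling solid-torus/thickened-torus components of $X$ -- is real but is a matter of care rather than a missing idea, whereas the rescaling issue is a hole in the logical structure of the contradiction.
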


Using arguments from \cite{BBCL}, we establish the following necessary condition for algebraic convergence on a submanifold.

\begin{theorem}
\label{relative convergence}
Let $M$ be an atoroidal Haken boundary-irreducible $3$-manifold all of whose characteristic $I$-pairs are product $I$-bundles.
 Let $\{m_i\}$ be a sequence in $\teich(\partial M)$, and $\{\rho_i:\pi_1(M)\to \PSL\}$  a sequence of representations corresponding to $\{q(m_i)\}$. 
 Let $\mu\subset \partial M$ be a full and clean marking, and  $W\subset M$ a submanifold with \lq paring locus' $P$ which is a union of disjoint non-parallel essential annuli on $\partial W$.
 We assume the following:
\begin{enumerate}[(a)]
		\item The closure  of $W\setminus \partial M$ is a union of essential annuli contained in $P$.
		\item 
		\label{P}
		For any non-contractible simple closed curve $c$ in $P$,  $\len_{\rho_i}(c)$ is bounded as $i \longrightarrow \infty$.
		\item For any essential annulus $E\subset W$ disjoint from $P$, there is a component $c$ of $\partial E$ such that $\{m(m_i|S,c,\mu)\}$ is bounded for the component $S$ of $\partial M$ on which $c$ lies.
		\item If $M$ is an $I$-bundle, then $P\neq\emptyset$.
\end{enumerate}
	
	Then the sequence of the restrictions $\{\rho_i| \pi_1(W)\}$ has a convergent subsequence up to conjugation.
\end{theorem}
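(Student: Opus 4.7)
The plan is to argue by contradiction. Assume that no subsequence of $\{\rho_i|\pi_1(W)\}$ converges up to conjugation; the goal is to exhibit an essential annulus $E \subset W$ disjoint from $P$ both of whose boundary components have unbounded $m$-invariant under $\{m_i\}$, contradicting hypothesis~(c).

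First, I would interpret $\{\rho_i|\pi_1(W)\}$ as a sequence in the geometrically finite deformation space of the pared $3$-manifold $(W,P)$. Such a sequence admits an Ahlfors--Bers-type parametrisation: its internal data is controlled on the one hand by the restriction of $m_i$ to the subsurface $R := (\partial W \cap \partial M)\setminus P$, and on the other by Teichm\"uller data inherited from the covers of $\hyperbolic^3/\rho_i(\pi_1(M))$ associated with the components $S$ of $\partial M$ that meet $W$. Applying the compactness criterion of \cref{no parabolics=bounded} together with the projection bounds of \cite{BBCL}, non-convergence produces a simple closed curve $c \subset \partial W$ witnessing the divergence: either $\{\len_{\rho_i}(c)\}$ is unbounded, or $\{d_Y(\mu(m_i|S),\mu)\}$ diverges for some essential subsurface $Y \subset S$ with $c \subset \partial Y$. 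In either case $\{m(m_i|S,c,\mu)\}$ is unbounded. Hypothesis~(b) rules out $c \subset P$, while hypothesis~(d) excludes the degenerate case where $W$ is itself an $I$-bundle and $R$ could vanish.

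Next, I would localise $c$ inside the characteristic submanifold $X$ of $M$. By \cref{broken window}, a curve whose complex length under $\rho_i$ behaves badly must be homotopic into $X$; combined with hypothesis~(a), which forces $\Fr W \subset P$, the curve $c$ must sit in a component $H$ of $W \cap X$. The product-$I$-bundle assumption on characteristic $I$-pairs then furnishes a natural companion curve $c' \subset \partial M$ on the opposite $\partial I$-boundary of $H$ (or, when $H$ is contained in a characteristic solid or thickened torus, on a parallel annulus of the corresponding torus piece), such that $c$ and $c'$ cobound an essential annulus $E \subset H \subset W$ disjoint from $P$.

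Finally, I would argue that $m(m_i|S', c', \mu)$ is also unbounded, where $S'$ denotes the component of $\partial M$ containing $c'$. Here one exploits the fact that the quasi-Fuchsian restrictions of $\rho_i$ to $\pi_1(S)$ and to $\pi_1(S')$ are spliced through $\pi_1(H) \subset \pi_1(W)$; via the $I$-bundle identification, the Teichm\"uller-level divergence witnessed by $c$ on the $S$-cover is transmitted to a Teichm\"uller-level divergence witnessed by $c'$ on the $S'$-cover. Both boundary components of $E$ then have unbounded $m$, contradicting~(c). The principal obstacle is the second step: concretely localising the divergence curve $c$ on $\partial W$ within the characteristic submanifold, rather than only in an abstract deformation-space parametrisation. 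This will require combining \cref{broken window} with the embedded-compact-core techniques from \cite{BBCM} to rule out degenerations supported in $\hyperbolic^3/\rho_i(\pi_1(M))$ outside of $X$ and outside of $W$.
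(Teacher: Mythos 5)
Your contradiction strategy has two genuine gaps, and the second one is fatal to the overall architecture.

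First, the opening step --- ``non-convergence produces a simple closed curve $c\subset\partial W$ with $\{m(m_i|S,c,\mu)\}$ unbounded'' --- is not justified by the tools you cite. \cref{no parabolics=bounded} is a statement about sequences in Teichm\"uller space (Fuchsian groups); it says nothing about restrictions of Kleinian representations to $\pi_1(W)$. The manifolds $\hyperbolic^3/\rho_i(\pi_1(W))$ are covers of $\hyperbolic^3/\rho_i(\pi_1(M))$ and are not parametrised by $m_i|_{(\partial W\cap\partial M)\setminus P}$; translating divergence of $\{\rho_i|\pi_1(W)\}$ into unboundedness of the invariant $m$ computed from the Ahlfors--Bers coordinates of the \emph{ambient} manifold is precisely the nontrivial content of the theorem, not something you may assume. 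More importantly, your proposal contains no mechanism at all for concluding convergence of a sequence of Kleinian representations. The paper's proof is direct, not by contradiction: using (a)--(d) and the characteristic submanifold of $(W,P)$, it constructs a pants decomposition of $\partial W$ together with transversals whose $\rho_i$-lengths are uniformly bounded (via \cref{broken window}, the arguments of \cite[Proposition 6.1 and Lemma 6.2]{BBCL}, and \cite[Theorem B]{minsky-gt}), checks that this system is doubly incompressible, and then invokes Thurston's relative boundedness theorem \cite[Theorem 3.1]{Th3}. Some such compactness input is indispensable and is entirely absent from your outline.

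Second, the transmission step is false. You claim that divergence of $m(m_i|S,c,\mu)$ is ``transmitted'' across the $I$-bundle to give divergence of $m(m_i|S',c',\mu)$ for the companion curve $c'$. But hypothesis (c) is an ``at least one side is bounded'' condition exactly because no such transmission holds: one can pinch $c$ in $m_i|S$ while holding $m_i|S'$ fixed, so that $m(m_i|S,c,\mu)\to\infty$ while $m(m_i|S',c',\mu)$ stays bounded, and the theorem asserts that $\{\rho_i|\pi_1(W)\}$ still converges in that situation. If your transmission principle were true, hypothesis (c) would force \emph{every} curve in a window to have bounded $m$, which is not what the theorem says and would make the whole statement trivial in the divergent case. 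The genuinely hard transfer results in the paper (e.g.\ \cref{unbounded has a root}) go from $m_i$ to the \emph{skinning} side $\sigma(m_i)$ and require geometric limits, embedded cores and wrapping; they do not give what you need here.
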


\begin{proof}
We follow the argument of \cite[Proposition 6.1]{BBCL} with some modifications as below.
Denote by $c_i$  a shortest pants decomposition of $\partial M$  with respect to $m_i$. 
Note that $\{d_Y(m_i, \mu)=d_Y(\mu(m_i), \mu)\}$ is bounded for any essential subsurface $Y$ that is not an annulus with its core curve in $c_i$ if and only if so is $\{d_Y(c_i, \mu)\}$. 
%
Let $r_0$ be a multicurve consisting of core curves of $P$, one taken from each component of $P$.
We define $r_0$ to be empty if $P$ is empty.
Let $X$ be the characteristic submanifold of $(W,P)$. 
Consider a multicurve $r$ of $(W\setminus X)\cap\partial W$ containing $r_0$ which is maximal in the sense that any simple closed curve in $(W\setminus X)\cap\partial W$ either intersects $r_0$ or is homotopic on $\partial W$ to a component of $r_0$. By our assumption (\ref{P}) and  \cref{broken window}, there is $L$ such that $\len_{\rho_i}(r)\leq L$. 

Let $Z$ be the union of the characteristic $I$-pairs in $X$. 
By assumption, $Z$ is a product $I$-bundle in the form $\Sigma\times I$ ($\Sigma$ may  be disconnected). 
We denote by $f \colon Z\to \Sigma$ the projection along the fibres,  and for a subsurface $F\subset \Sigma$ and for $j=0,1$, we use the symbol $F_j$ to denote $f^{-1}(F)\cap \Sigma\times\{j\}$. 
For each component $F$ of $\Sigma$ that is not a pair of pants, by the assumption (c), there is $j\in\{0,1\}$ such that $\{d_{F_j}(c_i,\mu)\}$ is bounded. 
Let $S_j$ be the component of $\partial M$ containing $F_j$, and denote by $\theta_i=\rho_i\circ I_*:\pi_1(S_j)\to\PSL$ the representation induced by the inclusion $I \colon S_j \hookrightarrow M$. 
The quotient manifold $\hthree/\theta_i(\pi_1(S_j))$ covers $\hthree/\rho_i(\pi_1(M))$, and has end invariant $m_i|_{S_j}$ on one side. 
Now, replacing $\rho_i$ with $\theta_i$,  we can follow the proof of \cite[Lemma 6.2]{BBCL} starting at the penultimate paragraph. This gives us a constant $L'$ and a sequence of curve $\{a_i\}$ on $F$ such that $\ell_{\rho_i}(a_i)\leq L'$ and $\{d_{F_j}(f^{-1}(a_i) \cap F_j,c_i)\}$ is bounded.

Up to isotopy, $f(r\cap Z)$ consists of boundary components of $\Sigma$. 
We denote $f(r\cap Z)$ by $s$. 
Still following \cite{BBCL}, if $\{a_i\}$ has a constant subsequence, then we pass to an
appropriate subsequence of $\{\rho_i\}$,  and add $a_i$ (independent of $i$) to $s$. 
If not, by \cite[Lemma 2.3]{BBCL}, there is a subsurface $Y\subset F$ with  $d_Y(a_i,\mu)\longrightarrow \infty$, passing to a subsequence. 
Since $\{d_{F_j}(c_i,\mu)\}$ and $\{d_{F_j}(f^{-1}(a_i) \cap F_j,c_i)\}$ are bounded, $Y$ must be a proper subsurface of $F$ (even up to isotopy). 
If,  passing to a subsequence, there is $k\in\{0,1\}$ such that $Y_k=f^{-1}(Y)\cap S_k$ is an annulus containing a component of $c_i$ for all $i$, we add the projection by $f$ of this component of $c_i$ to $s$. Otherwise, by the assumption (c), there exists
$k\in\{0,1\}$ with bounded  $\{d_{Y_k}(c_i,\mu)\}$. Hence, passing to a subsequence, $d_{Y_k}(c_i,f^{-1}(a_i) \cap S_k)\longrightarrow\infty$, and by \cite[Theorem B]{minsky-gt}, $\ell_{\rho_n}(\partial Y)\to 0$. In this case, we add $\partial Y$ to $s$. We repeat the above construction letting $F$ be a component of $\Sigma \setminus s$ until $\Sigma \setminus s$ becomes a union of annuli and pair of pants.
Adding $f^{-1}(s)$ to $r$, we obtain a pants decomposition of $\partial W$, which we shall still denote by $r$, such that  $\{\ell_{\rho_n}(r)\}$ is bounded.

Next we attach a transversal with bounded length to each component of $s$. 
Let $e$ be a curve in $s$, by the assumption (c), there is $j\in\{0,1\}$ such that $\{m(m_i,e_j,\mu)\}$ is bounded, where $e_j=f^{-1}(e) \cap S_j$. We replace $c_i$ with a shortest pants decomposition not containing $e_j$. 
Since $\{m(m_i,e_j,\mu)\}$ is bounded, there is a positive lower bound on $\{\ell_{m_i}(e_j)$\}, and there is an upper bound on $\{\ell_{m_i}(c_i)\}$ by our definition of $c_i$. 
Considering the covering associated with the inclusion $S_j\hookrightarrow M$ we can use the arguments of \cite{BBCL} (proof of Proposition 6.1, the part after the proof of Lemma 6.2) to obtain a transversal $t_e$ to $e$ with bounded length $\ell_{\rho_i}(t_e)$.

Since the union of $s$ and all  transversals defined for the components of $s$ is doubly incompressible in Thurston's sense \cite[Section 2]{Th3}, we can deduce from Thurston's relative boundedness theorem \cite[Theorem 3.1]{Th3} that the restriction of $\{\rho_i|_{\pi_1(W)}\}$ has a convergent subsequence.
\end{proof}

\begin{generalization}
The following is a refinement of \cref{relative convergence} which is adapted to the setting of \cref{more general}.
The proof is quite similar.

\begin{theorem}
\label{pared relative convergence}
Let $(M,Q)$ be an atoroidal  boundary-irreducible pared $3$-manifold all of whose characteristic $I$-pairs are product $I$-bundles.
 Let $\{m_i\}$ be a sequence in $\teich(\partial M \setminus Q)$, and $\{\rho_i:\pi_1(M)\to \PSL\}$  a sequence of representations corresponding to $\{q(m_i)\}$, where $q \colon \teich(\partial M \setminus Q) \to QH(M, \gamma_Q)$ is the parametrisation given in \S1, and $\gamma_Q$ denotes the union of core curves of $Q$. 
 Let $\mu\subset \partial M \setminus $ be a full and clean marking, and  $W\subset M$ a submanifold with paring locus $P$.
 We assume the following:
\begin{enumerate}[(a)]
\item $P$ contains $Q$.
		\item the closure  of $W\setminus \partial M$ is a union of essential annuli contained in $P$
		\item 
		\label{P}
		for any non-contractible simple closed curve $c$ in $P$,  $\len_{\rho_i}(c)$ is bounded as $i \longrightarrow \infty$;
		\item for any essential annulus $E\subset W$ disjoint from $P$, there is a component $c$ of $\partial E$ such that $\{m(m_i|_S,c,\mu)\}$ is bounded for the component $S$ of $\partial M$ on which $c$ lies.
\end{enumerate}

\end{theorem}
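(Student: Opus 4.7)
The plan is to adapt the proof of \cref{relative convergence} to the pared setting; the implicit conclusion being that, after passing to a subsequence, $\{\rho_i|_{\pi_1(W)}\}$ converges up to conjugation. The overall strategy is identical to the unpared case: I will construct a pants decomposition $r$ of $\partial W$ with $\{\len_{\rho_i}\}$ bounded on each component, attach a transversal of bounded length to every component of $r$, and invoke Thurston's relative boundedness theorem \cite[Theorem~3.1]{Th3}.

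First, I would take a shortest pants decomposition $c_i$ of $\partial M\setminus Q$ with respect to $m_i$ and form a multicurve $r_0$ by picking one core curve from each component of $P$. Since $P\supset Q$ by assumption (a), the core curves of $Q$ all appear in $r_0$; these are parabolic under $\rho_i$ and so have length zero, while the remaining components of $r_0$ have $\{\len_{\rho_i}\}$ bounded by (c). Letting $X$ denote the characteristic submanifold of the pared manifold $(W,P)$, I would extend $r_0$ to a multicurve $r\subset(W\setminus X)\cap\partial W$ maximal in the same sense used in the proof of \cref{relative convergence}, and then invoke \cref{broken window} (in its pared version, with pared locus $Q$ on $M$) to bound $\{\len_{\rho_i}(r)\}$.

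Next, I would treat the union $Z$ of characteristic $I$-pairs of $(W,P)$. These are products by assumption, so $Z\cong\Sigma\times I$ with fibre projection $f\colon Z\to\Sigma$. For each non-pants component $F$ of $\Sigma$, hypothesis (d) supplies $j\in\{0,1\}$ such that $\{d_{F_j}(c_i,\mu)\}$ is bounded, where $F_j=f^{-1}(F)\cap(\Sigma\times\{j\})$. Passing to the cover of $\hthree/\rho_i(\pi_1(M))$ associated to the component $S_j\subset\partial M\setminus Q$ containing $F_j$ (on which one end invariant is $m_i|_{S_j}$), I would reproduce the argument at the penultimate paragraph of the proof of \cite[Lemma~6.2]{BBCL} to produce curves $a_i$ on $F$ with both $\{\len_{\rho_i}(a_i)\}$ and $\{d_{F_j}(f^{-1}(a_i)\cap F_j,c_i)\}$ bounded. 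Then, following \cite{BBCL}, either $\{a_i\}$ has a constant subsequence, which I append to $s:=f(r\cap Z)$, or \cite[Lemma~2.3]{BBCL} provides a proper subsurface $Y\subset F$ with $d_Y(a_i,\mu)\to\infty$; combined with the bounded projection of $c_i$, this forces $d_{Y_k}(c_i,f^{-1}(a_i)\cap S_k)\to\infty$ for some $k$, whence $\len_{\rho_i}(\partial Y)\to 0$ by \cite[Theorem~B]{minsky-gt}, and $\partial Y$ can be added to $s$. Iterating until $\Sigma\setminus s$ is a union of pants and annuli and then adjoining $f^{-1}(s)$ to $r$ yields a pants decomposition of $\partial W$ with bounded $\rho_i$-length.

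Finally I would attach transversals of bounded length to every component of this pants decomposition. For each curve $e\subset s$, hypothesis (d) furnishes $j\in\{0,1\}$ with $\{m(m_i|_{S_j},e_j,\mu)\}$ bounded; after replacing $c_i$ by a shortest pants decomposition avoiding $e_j$, one constructs $t_e$ exactly as in the paragraph following \cite[Lemma~6.2]{BBCL}, working in the surface-group cover associated to $S_j\subset\partial M\setminus Q$. The resulting system of curves and transversals is doubly incompressible in Thurston's sense \cite[\S2]{Th3}, so Thurston's relative boundedness theorem delivers the convergent subsequence. The principal technical obstacle I anticipate is verifying that the subsurface-projection and end-invariant arguments of \cite{BBCL} and \cite{minsky-gt} transfer cleanly to covers associated to components of $\partial M\setminus Q$ whose boundary carries parabolic cusps coming from $Q$; but since those cusps are automatically short under $\rho_i$ and are entirely contained in $P$, this should amount to a careful but essentially routine bookkeeping adaptation of the unpared argument.
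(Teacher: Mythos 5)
Your proposal is correct and follows exactly the route the paper intends: the paper gives no separate argument for this pared version, stating only that ``the proof is quite similar'' to that of \cref{relative convergence}, and your write-up is a faithful adaptation of that proof (maximal multicurve $r$ through $P\supset Q$ and the complement of the characteristic submanifold, bounded-length curves and transversals on the $I$-pairs via \cite{BBCL} and \cite{minsky-gt}, then Thurston's relative boundedness theorem). The only cosmetic omission is the intermediate case in the iteration where $Y_k$ is an annulus containing a component of $c_i$, which the paper handles separately before invoking \cite[Theorem B]{minsky-gt}.
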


\end{generalization}

\section{Unbounded skinning and annuli}
\label{main prop}
The following proposition is the main step of our proof of \cref{main}.

\begin{proposition}	\label{unbounded has a root}
Let $M$ be an orientable atoroidal boundary-irreducible Haken $3$-manifold  that is strongly untwisted. 
Let $\{m_i\}$ be a sequence in $\teich(\partial M)$, let $\sigma$ be the skinning map, and assume that there is a simple closed curve $d$ on $\partial M$ such that $m(\sigma(m_i),d,\mu)\longrightarrow\infty$ for a full clean marking $\mu$. Then, passing to a subsequence, there is a properly embedded essential annulus $A\subset M$ with  $\partial A=d\cup d'$ such that $m(m_i,d',\mu)\longrightarrow\infty$.
\end{proposition}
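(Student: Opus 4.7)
The plan is to argue by contradiction. Suppose, after passing to a subsequence, that $m(\sigma(m_i),d,\mu)\to\infty$ while, for every essential annulus $A\subset M$ with $d$ on its boundary and other boundary component $d'$, the sequence $\{m(m_i,d',\mu)\}$ remains bounded. Since $M$ admits only finitely many isotopy classes of essential annuli with $d$ on the boundary (these being confined to the characteristic submanifold by JSJ theory)---say $A_1,\ldots,A_k$, with other boundary curves $d_1,\ldots,d_k$---I may refine the subsequence so that $\{m(m_i,d_j,\mu)\}$ is bounded for every $j$. Let $S$ be the component of $\partial M$ containing $d$, and $p\colon\tilde M_S\to M$ the cover associated with $\pi_1(S)$. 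The structures $q(m_i)$ lift to quasi-Fuchsian structures on $\tilde M_S\approx S\times\reals$ with conformal boundary data $(m_i|_S,\sigma(m_i)|_S)$; write $\rho_i$ for the holonomy of $q(m_i)$ and $\tilde\rho_i=\rho_i|_{\pi_1(S)}$.

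I apply \cref{relative convergence} to $\tilde M_S$, viewed as an atoroidal Haken $I$-bundle with compact core $S\times I$. Take $W=\tilde M_S$, and let the paring locus $P\subset\partial\tilde M_S$ be the union of annular neighborhoods, in both components of $\partial\tilde M_S$, of the canonical lift of $d$ and of the lifts of each $d_j$ that land on $\partial\tilde M_S$. Conditions (a) and (d) of \cref{relative convergence} are immediate. For condition (b), the lengths $\ell_{\tilde\rho_i}(c)=\ell_{\rho_i}(c)$ for $c\in P$ are bounded: the conjugacies $\rho_i(d)\sim\rho_i(d_j)$ given by each $A_j$ reduce the question to bounding $\ell_{\rho_i}(d_j)$, and the position of the $d_j$'s relative to the characteristic submanifold of $M$ allows such a bound via \cref{broken window} (outside the characteristic submanifold) or via the control on $m(m_i,d_j,\mu)$ combined with strong untwistedness (inside). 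Condition (c) is the delicate step: an essential annulus $E\subset\tilde M_S\cong S\times I$ disjoint from $P$ is, up to isotopy, of the form $\alpha\times I$ for an essential simple closed curve $\alpha\subset S$ not isotopic to $d$ nor to any lift of a $d_j$; its projection $p(E)\subset M$ is then an essential annulus of $M$ distinct from all $A_j$ and missing $d$ on its boundary. To conclude that at least one boundary curve of $E$ has bounded $m$-invariant, I will analyze such $\alpha$ using the characteristic submanifold structure of $M$ together with strong untwistedness and iterated application of the contrary assumption.

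Once (a)--(d) are verified, \cref{relative convergence} produces a convergent subsequence of $\{\tilde\rho_i\}$ in $\AH(S)$. By the relation between algebraic convergence and ending invariants from \cite{BBCM-gt}, for every subsurface $Y\subset S$ with $d\subset\partial Y$ the subsurface projections $d_Y(\mu(\sigma(m_i)|_S),\mu)$ are bounded, and $\ell_{\sigma(m_i)|_S}(d)$ stays bounded below---otherwise $d$ would become parabolic in the algebraic limit, producing an accidental-parabolic annulus that would feed back into the previous step and yield unbounded $m(m_i,d_j,\mu)$ for some $j$, contradicting our set-up. These two controls together bound $m(\sigma(m_i),d,\mu)$, contradicting the hypothesis. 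The main obstacle will be the verification of condition (c) of \cref{relative convergence}, requiring a careful analysis of essential annuli in $M$ not in the list $\{A_1,\ldots,A_k\}$ via the characteristic submanifold and the strong untwistedness of $M$. A secondary technical point is the deduction via \cite{BBCM-gt} that algebraic convergence of $\tilde\rho_i$ in $\AH(S)$ yields the required control on subsurface projections of $\sigma(m_i)|_S$.
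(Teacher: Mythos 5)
Your overall strategy (contradiction: assume $\{m(m_i,d_j,\mu)\}$ stays bounded for every curve $d_j$ cobounding an essential annulus with $d$, then derive boundedness of $m(\sigma(m_i),d,\mu)$) matches the paper's, and passing to the cover associated with $\pi_1(S)$ is in the right spirit. But there are two genuine gaps, the second of which is fatal to the proposed endgame.

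First, condition (c) of \cref{relative convergence} cannot be verified with $W$ equal to the whole quasi-Fuchsian cover $S\times I$ with Ahlfors--Bers coordinates $(m_i|_S,\sigma(m_i)|_S)$. For an annulus $\alpha\times I$ disjoint from your paring locus, the condition asks that $m(m_i|_S,\alpha,\mu)$ or $m(\sigma(m_i)|_S,\alpha,\mu)$ be bounded; the second quantity concerns the skinning side, which is exactly what you do not control, and the first is controlled by your contradiction hypothesis only for the finitely many curves $d_j$, not for arbitrary $\alpha$. Projecting $\alpha\times I$ to $M$ does not help: the image is merely an immersed annulus, and a general $\alpha$ bounds no essential embedded annulus of $M$, so the hypothesis on the $A_j$ says nothing about it. The paper avoids this by staying in $M$ itself: after a re-marking step (\cref{lemma:diffeo}) that forces the dichotomy ``bounded or $\to\infty$'' and normalizes Dehn twisting, it cuts $M$ along a \emph{maximal} family $\mathcal A$ of disjoint essential annuli whose core lengths tend to $0$, and applies \cref{relative convergence} to the component $V_d$ of the complement containing $d$; maximality of $\mathcal A$, together with \cref{lemma:multicurve m unbounded}, is what yields condition (c) there.

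Second, and more seriously, your concluding step is false: algebraic convergence of $\tilde\rho_i=\rho_i|_{\pi_1(S)}$ (or of $\rho_i|_{\pi_1(V_d)}$) does \emph{not} bound the subsurface projections of $\sigma(m_i)|_S$ near $d$, nor keep $\ell_{\sigma(m_i)}(d)$ bounded below. Indeed, in the paper's own argument (\cref{claim:short d}) the restrictions $\rho_i|_{\pi_1(V_d)}$ converge \emph{and} $\ell_{\rho_i}(d)\to 0$ \emph{and} $m(\sigma(m_i),d,\mu)\to\infty$ all hold simultaneously; convergence of the holonomy is perfectly compatible with degeneration of the end invariant at the skinning end (ending invariants are not continuous under algebraic limits). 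The actual contradiction requires the geometric-limit analysis of \S 5.2: embedding a compact core of $V_d$ in the geometric limit $\hyperbolic^3/\Gamma$ via \cite{BBCM}, undoing wrapping around the rank-2 cusp of $\rho_\infty(d)$ by Dehn twists (\cref{twists}), and the dichotomy of \cref{embedded core => bounded}, which says that whether $m(m_i,d_j,\mu)$ or $m(\sigma(m_i),d_j,\mu)$ blows up is determined by which side of that cusp the embedded surface $F_j$ lies on. Since all $F_j$ with $j\neq1$ lie on the outward side (by the contradiction hypothesis and \cref{embedded core surface}), the cusp must meet the core along a neighbourhood of $d_1$, forcing $F_1$ onto the inward side and hence $m(\sigma(m_i),d,\mu)$ bounded --- that is the contradiction. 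This entire layer is absent from your proposal and cannot be replaced by an appeal to algebraic convergence alone.
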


\begin{generalization}
In the setting of \cref{more general}, we need to refine \cref{unbounded has a root} as follows.

\begin{proposition}
\label{relative unbounded}
Let $(M,Q)$ be an orientable boundary-irreducible pared manifold all of whose characteristic $I$-pairs are product $I$-bundles.
Let $\{m_i\}$ be a sequence in $\teich(\partial M \setminus Q)$, and $\sigma_h \colon \teich(\partial_0 M \setminus Q) \to \teich(\bar \partial_0 M \setminus Q)$ be the skinning map as in the setting of \cref{more general}.
Suppose that there is an essential simple closed curve $d$  on $\partial_0 M \setminus P$ such that $m(\sigma_h(m_i), d, \mu) \longrightarrow \infty$.
Then passing to a subsequence, there is a properly embedded essential annulus $S \subset M$ with $\partial A=d\cup d'$ such that $d' \subset \partial_0 M \setminus Q$ and $m(m_i, d', \mu) \longrightarrow \infty$.
\end{proposition}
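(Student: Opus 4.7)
The plan is to argue by contradiction, following the same strategy as for \cref{unbounded has a root}. We assume that for every essential annulus $A\subset M$ with $\partial A=d\cup d'$ and $d'\subset\partial_0 M\setminus Q$ the sequence $\{m(m_i,d',\mu)\}$ is bounded; by extracting a subsequence we arrange that this holds simultaneously for the finitely many isotopy classes of such $d'$. The goal is to contradict the hypothesis $m(\sigma_h(m_i),d,\mu)\to\infty$.

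The first and hardest step is to show that $d$ itself is isotopic into a single component $X_0$ of the characteristic submanifold of $(M,Q)$. Let $S$ be the component of $\partial_0 M$ containing $d$. The hypothesis means, after passing to a subsequence, one of two scenarios occurs: either $\ell_{\sigma_h(m_i)|_S}(d)\to 0$, in which case the bound $\ell_{\rho_i}(\cdot)\leq\ell_{\sigma_h(m_i)|_S}(\cdot)$ gives $\ell_{\rho_i}(d)\to 0$ and Thurston's broken window theorem \cref{broken window} places $d$ in the characteristic submanifold $X$; or there is a subsurface $Y\subset S$ with $d\subset\partial Y$ and $d_Y(\mu(\sigma_h(m_i)|_S),\mu)\to\infty$, in which case the short-curve structure results of \cite{BBCM-gt} force some component $c$ of $\partial Y$ to satisfy $\ell_{\rho_i}(c)\to 0$, and broken window places $c$ in $X$. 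To upgrade the latter scenario to showing that $d$ itself is in $X$, we apply \cref{relative convergence} (in its pared form) to a submanifold of $M$ containing a neighborhood of $S$, with paring locus given by the multicurve of simple closed curves $c'\subset\partial_0 M$ along which $\{m(m_i,c',\mu)\}$ is unbounded (these form a multicurve by \cref{lemma:multicurve m unbounded}). The contradiction hypothesis ensures condition~(c) of \cref{relative convergence} outside this paring, and the resulting algebraic convergence, combined with broken window, pins the degeneration down to $d$ itself.

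Once $d$ is isotopic into a component $X_0$, we produce the partner $d'$ and the annulus $A$ from the structure of $X_0$. Since every characteristic $I$-pair is a product $I$-bundle, $X_0$ is either a product $I$-bundle $F\times[0,1]$ with $F\times\{0\}\subset S$ and $F\times\{1\}$ in a possibly equal component $S'$ of $\partial_0 M$, or else a solid or thickened torus component. In the $I$-bundle case we set $d':=d\times\{1\}\subset S'$ and $A:=d\times[0,1]$; in the torus case the annular components of $X_0\cap\partial M$ provide a canonical partner $d'$ together with a cobounding essential annulus $A\subset X_0$. The hypothesis $d\subset\partial_0M\setminus P$ combined with $Q\subset P$ and the structure of the characteristic submanifold of $(M,Q)$ ensures $d'\subset\partial_0M\setminus Q$.

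The final step transfers the unboundedness of the $m$-invariant from the $\sigma_h$-side to the $m_i$-side via the annulus. The key observation is that the quasi-Fuchsian covering $\tilde M_F=\hthree/\rho_i(\pi_1(F))$ admits two equivalent descriptions of its pair of conformal boundaries: via $\pi_1(F)\subset\pi_1(S)$ they are the restrictions $(m_i|_S|_F,\sigma_h(m_i)|_S|_F)$, while via $\pi_1(F)\subset\pi_1(S')$ they are $(m_i|_{S'}|_F,\sigma_h(m_i)|_{S'}|_F)$. Tracking which end of $\tilde M_F$ exits $M$ through $S$ and which through $S'$ using the fibre of $X_0$ yields the identification $\sigma_h(m_i)|_S|_F=m_i|_{S'}|_F$ (up to an orientation reversal that does not affect the $m$-invariant). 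Applied to the curves $d$ and $d'$, identified as curves on $F$, this gives $m(m_i|_{S'},d',\mu)\to\infty$, contradicting our standing assumption. The solid or thickened torus case is handled analogously via the Margulis tube around the core curve and the resulting relation between its parameters and the conformal boundaries on both sides. The main obstacle, as indicated above, is the upgrade step in the second paragraph, which requires a careful combination of \cite{BBCM-gt}, broken window, and \cref{relative convergence}.
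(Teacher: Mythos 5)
The overall shape of your argument (contradiction hypothesis over all annuli with boundary containing $d$, reduction to the characteristic submanifold) matches the paper's strategy for \cref{unbounded has a root}, of which this proposition is the pared analogue, but your third step contains a fatal error, and it sits exactly where the content of the proposition lives. You assert that the conformal boundary of the cover $\hthree/\rho_i(\pi_1(F))$ can be described as the restrictions $\sigma_h(m_i)|_S|_F$ and $m_i|_{S'}|_F$, and hence that these agree. This is only true when $M$ itself is the $I$-bundle $F\times[0,1]$ (the case the paper dispatches in one line). When $X_0=F\times I$ is a proper submanifold, the cover associated with $\pi_1(F)$ has a strictly larger domain of discontinuity than the one associated with $\pi_1(S)$; its conformal boundary is not a ``restriction'' of $m_i|_S$ or $\sigma_h(m_i)|_S$ in any sense giving a point of $\teich(F)$, and no identity of the kind you use holds. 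The actual relation between the behaviour of $\sigma_h(m_i)$ near $d$ and of $m_i$ near a partner curve is only coarse, and the paper establishes it through geometric limits: one embeds relative compact cores of the components of $V_d\setminus T$ in the geometric limit via \cite[Proposition 4.4]{BBCM} (\cref{embedded core for W}), corrects for possible wrapping of the algebraic limit around the torus cusp of the geometric limit by Dehn twists along the annuli joining $d$ to the $d_j$ (\cref{twists}), and then uses \cite[Theorem 1.3]{BBCM-gt} together with Minsky's Short Curve Theorem to show that whether $F_j$ embeds on the outward or inward side of the cusp at $\rho_\infty(d)$ decides which of $\{m(m_i,d_j,\mu)\}$ and $\{m(\sigma(m_i),d_j,\mu)\}$ is bounded (\cref{embedded core => bounded}). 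The wrapping phenomenon is invisible in your argument, and without controlling it one cannot determine on which side of the cusp the surface lies.

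There is also a gap in your first step: from $d_Y(\mu(\sigma_h(m_i)|_S),\mu)\to\infty$ with $\mu$ a \emph{fixed} marking you cannot conclude that some component of $\partial Y$ becomes short; the criteria of \cite{minsky-gt} and \cite{BBCM-gt} require the projections to $\cc(Y)$ of curves of \emph{bounded $\rho_i$-length} (or of the two end invariants of a surface group) to diverge from one another, and manufacturing such bounded-length curves is precisely the role of the re-marking lemma (\cref{lemma:diffeo}), of \cref{broken window}, and of \cref{relative convergence} in the paper; your ``upgrade'' sentence leaves this unaddressed. Note finally that the paper does not first locate $d$ in the characteristic submanifold and then pick a canonical partner: it assumes $\{m(m_i,d_j,\mu)\}$ bounded for every curve $d_j$ ($j\geq 2$) homotopic to $d$ in $M$, builds $V_d$ from a maximal family of annuli with shrinking core curves, proves $\ell_{\rho_i}(d)\to 0$ and relative convergence on $V_d$, and shows via \cref{cusp on one side} that the cusp of the geometric limit must then attach along $d_1$ on the inward side, forcing $\{m(\sigma(m_i),d,\mu)\}$ to be bounded --- the desired contradiction. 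The pared version requires only the modifications indicated in the paper (thickened-torus components of the characteristic submanifold, the paring locus $Q\subset P$ in \cref{relative convergence}), not a new mechanism.
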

In the following, we shall principally prove \cref{unbounded has a root}, whereas we shall put remarks where we need modifications for the proof of \cref{relative unbounded}.
\end{generalization}
%
%
%
We are going to show that any subsequence of $\{m_i\}$ contains a further subsequence for which the conclusion holds. To simplify the notations we shall use the same subscript $i$ for all subsequences.

\subsection{Re-marking}
Our manifold $M$ is either connected or has two components.
In the case when $M$ has two components, by considering the component on which $d$ lies, and abusing the symbol $M$ to denote this component, we can assume that $M$ is connected.
Recall that, by the assumption throughout this section, $M$ is strongly untwisted.
Let $\rho_i \colon \pi_1(M)\to \pslc$ be a representation corresponding to $q(m_i)$.

%

As a first step for the proof of \cref{unbounded has a root}, we change the markings of $M$ so that the behaviour of the $\rho_i$ can be read more easily from the behaviour of their end invariants.

\begin{lemma}		\label{lemma:diffeo}
Let $d$ be an essential simple closed curve on $\partial M$,  and let $d_1,\dots ,d_p$ be disjoint simple closed curves on $\partial M$ representing the homotopy classes of simple closed curves on $\partial M$ homotopic to $d$ in $M$, where $d_1=d$.
Furthermore, we assume that
\begin{enumerate}
\item[(*)]  $\{m(m_i,d_j,\mu)\}$ is bounded for every $j=2,\dots , p$.
\end{enumerate}
Then there is a sequence of orientation-preserving homeomorphisms $\{\psi_i:M\to M\}$ such that, passing to a subsequence, the following hold:
\begin{enumerate}[(1)]
\item For any essential simple closed curve $c\subset\partial M$, either $\{m(\psi_{i*}(m_i),c,\mu)\}$ is bounded or $m(\psi_{i*}(m_i),c,\mu)\longrightarrow\infty$,\label{property:bd or infty}.
\item If $A\subset M$ is an essential annulus disjoint from all the $d_j$ such that $m(\psi_{i*}(m_i),\partial_k A,\mu)\longrightarrow\infty$ for both boundary components $\partial_1 A$ and $\partial_2 A$ of $A$, then $\ell_{\rho_i\circ \psi^{-1}_{i*}}(\partial A)\longrightarrow 0$. \label{property:goes to 0}
\item For every $d_j$ among  $d_1, \dots , d_p$ defined above, 
\label{property:same behaviour}
\begin{enumerate}[(i)]
	 \item $\psi_i(d_j)=d_j$ for every $i$ and $j$,
	 \item for every $j=1, \dots ,p$,  $\{m(\psi_{i*}(m_i),d_j,\mu)\}$ is bounded if and only if $\{m(m_1,d_j,\mu)\}$ is bounded, and
	 \item $\{m(\sigma \circ \psi_{i*}(m_i),d_j,\mu)\}$ is bounded if and only if $\{m(\sigma(m_i),d_j,\mu)\}$ is bounded.
\end{enumerate}

\end{enumerate}
\end{lemma}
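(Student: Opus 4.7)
The plan is to build each $\psi_i$ as a product of mapping classes of $M$ that fix every $d_j$ setwise. Thanks to the strong untwistedness of $M$, every characteristic $I$-pair is a product bundle $F\times I$, so a Dehn twist along any essential simple closed curve $\alpha\subset F$ extends over $F\times I$ to a self-homeomorphism of $M$ that restricts to the double twist along $\alpha\times\{0,1\}$ on $\partial M$; analogous extensions exist for twists along essential annuli in characteristic solid or thickened tori. Let $\Phi$ denote the subgroup of the mapping class group of $M$ generated by such twists whose support curves on $\partial M$ are disjoint from every $d_j$; condition (B) of strong untwistedness guarantees that every element of $\Phi$ fixes each $d_j$ setwise. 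Crucially, for $\psi\in\Phi$ the hyperbolic $3$-manifolds associated with $\rho_i$ and $\rho_i\circ\psi^{-1}_{*}$ are isometric, so lengths transform by $\ell_{\rho_i\circ\psi^{-1}_{i*}}(\gamma)=\ell_{\rho_i}(\psi_i^{-1}(\gamma))$, and the skinning map is equivariant under the $\Phi$-action on $\teich(\partial M)$.

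For property (\ref{property:bd or infty}), I would first enumerate isotopy classes of essential simple closed curves on $\partial M$ and, by a standard diagonal extraction, pass to a subsequence so that for every such curve $c$ the sequence $\{m(m_i,c,\mu)\}$ either converges or tends to $\infty$. Next I would select $\psi_i\in\Phi$ so that, for every essential annulus $A\subset X$ disjoint from all $d_j$, the Fenchel--Nielsen twist parameter of $\psi_{i*}(m_i)$ around each component of $\partial A$ lies in a fixed bounded interval. Since $X$ has only finitely many characteristic annuli up to isotopy, $\psi_i$ is a finite product of generators of $\Phi$, and the resulting displacement of the marking is uniformly bounded on every subsurface whose boundary contains a $d_j$. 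Consequently the dichotomy already established for $\{m_i\}$ transports to $\{\psi_{i*}(m_i)\}$, giving property (\ref{property:bd or infty}); property (\ref{property:same behaviour}) follows from the same bounded-displacement observation together with the equivariance of $\sigma$ (for (iii)) and the fact that every $\psi_i$ fixes each $d_j$ (for (i)).

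The core step is property (\ref{property:goes to 0}). Let $A\subset M$ be an essential annulus disjoint from every $d_j$ with $m(\psi_{i*}(m_i),\partial_k A,\mu)\to\infty$ for both $k=1,2$. After isotopy, $A$ lies in the characteristic submanifold $X$. By construction, $\psi_i$ placed the twist parameter of $\psi_{i*}(m_i)$ around each component of $\partial A$ into a bounded interval, so the growth of $m(\psi_{i*}(m_i),\partial_k A,\mu)$ cannot be produced by the annular projection around $\partial A$ alone. It must therefore come either from a non-annular subsurface $Y$ with $\partial_k A\subset\partial Y$ and $d_Y(\mu(\psi_{i*}(m_i)),\mu)\to\infty$, or from $\ell_{\psi_{i*}(m_i)}(\partial_k A)\to 0$. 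In either case, applying Minsky's length bound from \cite{minsky-gt} to the surface group corresponding to the component of $\partial M$ containing $\partial_k A$ forces $\ell_{\rho_i}(\psi_i^{-1}(\partial_k A))\to 0$, which is exactly $\ell_{\rho_i\circ\psi^{-1}_{i*}}(\partial A)\to 0$.

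The main obstacle I anticipate is verifying that the twist normalization truly absorbs every twist-type contribution to the divergence of $m$, so that any remaining unboundedness is genuinely geometric and detectable by Minsky's length bound. Strong untwistedness enters essentially at two places: condition (A) supplies the globally-extending homeomorphisms used to build $\Phi$, and condition (B) prevents twists supported on distinct sides of a $d_j$ from conspiring to distort its subsurface projections or to move it off itself. A secondary technical point is the bookkeeping needed to check that the bounded-displacement contribution of $\psi_i$ never flips convergence into divergence on the countable set of test curves used for the diagonal extraction.
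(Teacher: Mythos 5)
Your overall strategy --- normalizing the markings by homeomorphisms supported on the characteristic submanifold and extended fibrewise over the product $I$-pairs --- is the same as the paper's, but two of your steps have genuine gaps. The smaller one concerns property (3): the claim that the displacement of the marking under $\psi_i$ is ``uniformly bounded on every subsurface whose boundary contains a $d_j$'' does not follow from $\psi_i$ being a finite product of generators, because the twisting powers necessarily grow with $i$ (that is the whole point of the normalization), so $d_Y(\mu,\psi_i(\mu))$ is certainly not bounded for arbitrary $Y$. The boundedness on the relevant subsurfaces holds only after a case analysis: either $Y$ is crossed by a curve outside the characteristic submanifold that $\psi_i$ fixes, in which case a Masur--Minsky intersection bound applies, or $Y$ lies in a product $I$-pair and projects along the fibres to a subsurface whose boundary contains some $d_k$ with $k\geq 2$, where hypothesis (*) together with the normalization on the opposite side of the bundle controls the projection. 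This is exactly the content of the paper's Claim~\ref{psi bounded}, and it is where (*) and strong untwistedness actually do their work; it cannot be skipped.

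The serious gap is in property (2). You argue that, after the twist normalization, the divergence of $m(\psi_{i*}(m_i),\partial_k A,\mu)$ must come either from $\ell_{\psi_{i*}(m_i)}(\partial_k A)\to 0$ or from a non-annular subsurface $Y$ with $d_Y(\mu(\psi_{i*}(m_i)),\mu)\to\infty$, and that ``in either case'' Minsky's length bound yields $\ell_{\rho_i\circ\psi_{i*}^{-1}}(\partial A)\to 0$. But Minsky's Theorem B requires the set of curves of bounded \emph{three-dimensional} length to have large diameter in the curve complex of $Y$; a large distance between $\mu(\psi_{i*}(m_i))$ and the fixed topological marking $\mu$ says nothing by itself about the geometry of $\hthree/\rho_i(\pi_1(S))$, whose other end invariant $\sigma(m_i)$ is completely uncontrolled at this stage. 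One must exhibit a \emph{second} curve of bounded $\rho_i$-length whose projection to $Y$ stays far from $\mu(\psi_{i*}(m_i))$. The paper produces it in two different ways: if $Y$ is not eventually contained in the characteristic submanifold, a curve in $M\setminus X$ crossing $Y$ has bounded length by the broken-window theorem (\cref{broken window}); if $Y$ lies in a product $I$-pair $F\times I$, one uses the list $a_1,\dots,a_q$ of all boundary curves homotopic to $\partial_1 A$, the fact that the two diverging subsurfaces cannot both be annuli (this is where the normalization is really used), the bounded projection on the opposite boundary component of the bundle, and the efficiency of pleated surfaces to transport a bounded-length curve across the $I$-bundle to the fibre surface before applying Theorem B. Without constructing this second curve the central implication of property (2) is unproved, and your own flagged ``main obstacle'' is precisely this missing argument.
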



\begin{proof}
We shall first define the homeomorphisms $\psi_i$, and then verify the desired properties.
Let $\Xi$ be a component of the characteristic submanifold $X$ of $M\setminus d$. Suppose first that $\Xi$ is a solid
\begin{generalization}
or thickened
\end{generalization} 
torus.
The components of $\overline{\partial \Xi\setminus \partial M}$ are incompressible annuli.
We define $\psi_i$ on soid-torus components $\Xi$ of $X$ to be  a composition of Dehn twists along these frontier annuli with the following properties:
\begin{enumerate}[(a)]
	\item If $\Xi$ is a solid torus, then $\pi_{F}(\mu(\psi_{i*}(m_i)))$ is constant with respect to $i$ for every component $F$ of $\Xi\cap\partial M$ except for at most one.\label{solid torus}
\begin{generalization}
	\item If $\Xi$ is a thickened torus, then $\pi_{F}(\mu(\psi_{i*}(m_i)))$ is constant with respect to $i$ for every component $F$ of $\Xi\cap\partial M$.\label{thickened torus}
\end{generalization}
\end{enumerate}
By the assumption  (*),  passing to a subsequence, we need not compose Dehn twists along annuli of the frontier components of $\Xi$ to achieve the condition (\ref{solid torus}) when $\Xi \cap \partial M$ contains an annular neighbourhood of  $d$ (up to isotopy), and hence $\psi_i$, as defined for the moment,  also satisfies the following:

\begin{enumerate}[(a)]
\addtocounter{enumi}{1}
\item For every $j=1, \dots, p$, we have $\psi_i(d_j)=d_j$ and $\pi_{A_j}(\mu(\psi_{i*}(m_i)))=\pi_{A_j}(\mu(m_i))$ for an  annulus $A_j$ on $\partial M$ whose core curve is $d_j$.\label{homotopic to d}
\end{enumerate}

If $\Xi$ is not a solid
\begin{generalization}
	or thickened
\end{generalization} 
 torus,  $\Xi$ is a product $\Xi= F\times I$. 
(Recall that we have an assumption that every characteristic $I$-pair of $M$ is a product bundle. This implies that an $I$-pair in the characteristic submanifold $X$ of $M \setminus d$ is also a product $I$-bundle.)
Let  $F_0$ be a component of $\Xi\cap\partial M$ which does not contain a curve homotopic on $\partial M$ to $d_1$ (there is always such a component since $M$ is strongly untwisted). 
Since the curve complex of $F_0$ has finitely many orbits under the action of the mapping class group of $F_0$ (relative to $\partial F_0$), there is a sequence of orientation-preserving homeomorphisms $g_i:F_0\to F_0$ fixing $\partial F_0$ such that, passing to a subsequence, $\pi_{F_0}(\mu(g_{i*}(m_i)))$ is constant. 
We then define $\psi_i$ on $\Xi$ by extending $g_i$ along the fibres, i.e. $\psi_i(x,t)=(g_i(x),t)$ for any $(x,t)\in \Xi= F_0\times I$.

Thus we have the following.
\begin{enumerate}[(a)]
	\addtocounter{enumi}{2}
	\item there are $R>0$ and a component $F_0$ of $\Xi\cap\partial M$ not containing any curve homotopic on $\partial M$ to $d_1$ such that  $d_Y(\mu(\psi_{i*}(m_i)),\mu)\leq R$ for any incompressible subsurface $Y\subset F_0$. 
	\label{almost constant}
	\label{I-bundle}
\end{enumerate}

We note that since $\Xi$ is a component of the characteristic submanifold of $M\setminus d$, if $\partial \Xi$ contains a curve $d_j$, then it must be peripheral, and hence the action of $\psi_i$ on $\Xi$ does not affect the property (\ref{homotopic to d}).

We repeat the construction above for all the components of the characteristic submanifold $X$, and we extend the resulting homeomorphisms to   a homeomorphism of $M$ which is isotopic to the identity on the complement of the characteristic submanifold.

We now verify the properties (\ref{property:bd or infty}, \ref{property:goes to 0}, \ref{property:same behaviour}) for $\psi_i$ thus constructed.

The first property (\ref{property:bd or infty}) can be obtained by passing to a subsequence for any sequence of homeomorphisms. 
Therefore, we are done with (\ref{property:bd or infty}).

We next turn to proving the property (\ref{property:same behaviour}).
By the assumption (*), taking a subsequence, we may assume that $\pi_{F}(\mu(m_i))$ is constant whenever $F$ is an annulus containing a curve $d_j$ for $j\neq 1$.
Wet first show the following claim.
\begin{claim}	\label{psi bounded}
For every $j=1, \dots ,p$ and for any sequence of incompressible subsurfaces $Y_i\subset\partial M$  with its boundary containing $d_j$ which are not a pair of pants, $\{d_{Y_i}(\mu,\psi_i(\mu))\}$ is bounded.
\end{claim}

\begin{proof} 
Fix $j=1, \dots , p$, and consider a sequence of incompressible subsurfaces $Y_i\subset\partial M$ each of which contains $d_j$ in its boundary. 
If all of the $Y_i$ are annulli after passing to a subsequence, the conclusion follows from the property (\ref{homotopic to d}). 
From now on, taking a subsequence, we assume that none of the $Y_i$ are annuli.

Assume first that there is a simple closed curve $c\subset \partial M$ intersecting $Y_i$ which lies outside the characteristic submanifold $X$.
Then by our construction of $\psi_i$, we have $\psi_i(c)=c$, and hence

\begin{align*}
	d_{Y_i}(\mu,\psi_i(\mu))&\leq d_{Y_i}(\mu,c)+d_{Y_i}(c,\psi_i(\mu))\\
	&\leq d_{Y_i}(\mu,c)+d_{Y_i}(\psi_i(c),\psi_i(\mu))\\
	&\leq d_{Y_i}(\mu,c)+ d_{\psi_i^{-1}(Y_i)}(\mu,c)\\
	&\leq 4 i(c,\mu)+2,
\end{align*}
where the last inequality is due to Masur--Minsky \cite[Lemma 2.1]{MM}.
Thus we are done in this case.

Otherwise, taking a subsequence, we may assume that $Y_i$ is contained in $\Xi_i \cap \partial M$ for a component $\Xi_i$ of the characteristic submanifold $X$. 
Taking a further subsequence, we may assume that $\Xi_i=\Xi$ does not depend on $i$. 
Since $Y_i$ is not an annulus, $\Xi$ is a product $I$-pair $F\times I$. 
Let $F_0$ be the component of $\Xi\cap\partial M$ given by the property (\ref{I-bundle}). 
Let us denote by $Y_i'$ the projection of $Y_i$ to $F_0$ along the fibres, (setting $Y_i'=Y_i$ if $Y_i\subset F_0$). 
By our definition of $d_1, \dots, d_p$, the boundary of $Y'_i$ contains some $d_k$ with $k \geq 2$. 
Then,  $\{m(m_i,d_k,\mu)\}$ is bounded by the assumption (*), and $\psi_i(d_k)=d_k$ by the property (\ref{homotopic to d}). 
In particular $\{d_{Y'_i}(\mu(\psi_i(m_i)),\psi_i(\mu))=d_{\psi_i^{-1}(Y_i')}(\mu(m_i),\mu)\}$ is bounded. 
On the other hand, by the property (\ref{I-bundle}), $\{d_{Y_i'}(\mu(\psi_{i*}(m_i)),\mu)\}$ is bounded. 
Thus we see that $\{d_{Y'_i}(\mu,\psi_i(\mu))\leq d_{Y'_i}(\mu,\mu(\psi_{i*}(m_i))+d_{Y'_i}(\mu(\psi_{i*}(m_i),\psi_i(\mu))\}$ is bounded. 
It follows from the construction of $\psi_i$ that $d_{Y_i}(\mu,\psi_i(\mu))=d_{Y'_i}(\mu,\psi_i(\mu))$,  and hence $\{d_{Y_i}(\mu,\psi_i(\mu))\}$ is also bounded.
\end{proof}

Now we can show that the sequence $\{\psi_i\}$ satisfies the property (\ref{property:same behaviour}) by the condition (*) and the following claim.

\begin{claim}	\label{cl: same behaviour}
For any $j=1, \dots , p$, the sequence $\{m(\psi_{i*}(m_i),d_j,\mu)\}$ is bounded if and only if $\{m(m_i, d_j,\mu)\}$ is bounded, and $\{m(\sigma\circ \psi_{i*}(m_i),d_j,\mu)\}$ is bounded if and only if $\{m(\sigma(m_i),d_j,\mu)\}$ is bounded.
\end{claim}

\begin{proof}
%
Let $\{Y_i\subset\partial M\}$ be a sequence of incompressible subsurfaces with $d_j\subset\partial Y_i$ which are not pairs of pants.
Since $d_{Y_i}(m_i,\mu)=d_{\psi_i(Y_i)}(\mu(\psi_{i*}(m_i)),\psi_i(\mu))$, the triangle inequalities \begin{equation*}\begin{split}
&d_{Y_i}(\mu(m_i),\mu)\leq d_{\psi_i(Y_i)}(\mu(\psi_{i*}(m_i)),\mu)+d_{\psi_i(Y_i)}(\mu,\psi_i(\mu)), \text{ and }\\ &d_{\psi_i(Y_i)}(\mu(\psi_{i*}(m_i)),\mu)\leq d_{\psi_i(Y_i)}(\mu(\psi_{i*}(m_i)),\psi_i(\mu))+d_{\psi_i(Y_i)}(\psi_i(\mu),\mu)\end{split}
\end{equation*}
 lead to
\begin{equation*}
\begin{split} &d_{\psi_i(Y_i)}(\mu(\psi_{i*}(m_i)),\mu)-d_{\psi_i(Y_i)}(\mu,\psi_i(\mu))\\
&\leq d_{Y_i}(\mu(m_i),\mu)\leq d_{\psi_i(Y_i)}(\mu(\psi_{i*}(m_i)),\mu)+d_{\psi_i(Y_i)}(\mu,\psi_i(\mu)).\end{split}
\end{equation*}
Thus by applying Claim \ref{psi bounded}, we see that $\{d_{Y_i}(\mu(m_i),\mu)\}$ is bounded if and only if $\{d_{\psi_i(Y_i)}(\mu(\psi_{i*}(m_i)),\mu)\}$ is bounded. 

Since $\psi_i(d_j)=d_j$ by the property (\ref{homotopic to d}), we also have $\len_{m_i}(d_j)=\len_{\psi_{i*}(m_i)}(d_j)$, and we conclude that $\{m(\psi_{i*}(m_i),d_j,\mu)\}$ is bounded if and only if $\{m(m_i,d_j,\mu)\}$ is bounded.
	
Since $\sigma$ commutes with $\psi_{i*}$, the same argument shows that  that $\{m(\sigma\circ \psi_{i*}(m_i),d,\mu)\}$ is bounded if and only if $\{(\sigma(m_i), d, \mu)\}$ is bounded.
\end{proof}

To conclude the proof of Lemma \ref{lemma:diffeo}, it remains to establish the property (\ref{property:goes to 0}).
We restate the property as a claim.

\sloppy
\begin{claim}	\label{lma: goes to 0}
Let $A\subset M$ be an essential annulus with its boundary components denoted by $\partial_1A$ and $\partial_2 A$.
Suppose  that $m(\psi_{i*}(m_i),\partial_k A,\mu)\longrightarrow\infty$ for both $k=1$ and $k=2$.
Then $\len_{\rho_i\circ \psi^{-1}_{i*}}(\partial_1 A)\longrightarrow 0$.
\end{claim}

\begin{proof}
Let $a_1, \dots , a_q$ be homotopically distinct simple closed curves  on $\partial M$ representing all the homotopy classes (in $\partial M$) homotopic to $\partial_1 A$ in $M$.
By renumbering them, we can assume $a_k=\partial_k A$ for $k=1,2$.
If $\len_{\psi_{i*}(m_i)}(a_k)\longrightarrow 0$ for some $k=1, \dots, q$, we are done. 

To deal with the remaining case, we now assume that there is a positive constant $\eps$ such that $\len_{\psi_{i*}(m_i)}(a_k)\geq\eps$ for every $i\in\naturals$ and $k=1, \dots , q$. 
Then, there are a constant $L$ and simple closed curves $c_{k,i}$ for every $i\in\naturals$ and $k=1, \dots, q$ such that $c_{k,i}$ intersects $a_k$ essentially and $\len_{\psi_{i*}(m_i)}(c_{k,i})\leq L$. 
There is also $K_1$ such that $d_Y(c_{k,i},\mu(\psi_{i*}(m_i)))\leq K_1$ for any $j,i$ and any incompressible subsurface $Y\subset\partial M$ intersecting $c_{k,i}$ that is neither an annulus nor a pair of pants, since by definition, the length of $\mu(\psi_{i*}(m_i))$  is also bounded from above by a constant.

Since $m(\psi_{i*}(m_i),a_k,\mu)\longrightarrow\infty$ and $\ell_{\psi_{i*}(m_i)}(a_k)\geq\eps$ for $k=1,2$,  there are incompressible subsurfaces $Y_{k,i}$ such that $a_k\subset\partial Y_{k,i}$ and $d_{Y_{k,i}}(\mu(\psi_{i*}(m_i)),\mu)\longrightarrow\infty$ for $k=1,2$. If, passing to a subsequence, $Y_{1,i}$ and $Y_{2,i}$ are both annuli, then, up to homotopy, they lie on the boundary of the same component $\Xi$ of the characteristic submanifold (which is, up to passing to a further subsequence independent of $i$). However, the assumption that $m(\psi_{i*}(m_i),a_k,\mu)\longrightarrow\infty$ contradicts (\ref{solid torus}) when $\Xi$ is a solid torus,
\begin{generalization}
(\ref{thickened torus}) when $\Xi$ is a thickened torus
\end{generalization}
and (\ref{I-bundle}) when $\Xi$ is an $I$-pair. Therefore, we can assume that one of the $Y_{k,i} (k=1,2)$, say $Y_{1,i}$ is not an annulus.

Suppose now that  $Y_{1,i}$ is not eventually contained in the characteristic submanifold $X$ (up to homotopy), even after passing to a subsequence.
By taking a subsequence, we can assume that none of the $Y_{1,i}$ are contained in $X$.
Then, there is a simple closed curve $c\subset \partial M$ disjoint from $X$ which intersects $Y_{1,i}$ for all $i$, by passing to a further subsequence. 
By \cref{broken window} there is a constant $L$ such that $\len_{\rho_i \circ \psi_{i*}^{-1}}(c)\leq L$. Since $d_{Y_{1,i}}(\mu(\psi_{i*}(m_i)),\mu)\longrightarrow\infty$ by our assumption, we have $d_{Y_{1,i}}(c_{1,i},c)\longrightarrow\infty$. 
Then it follows from \cite[Theorem B]{minsky-gt} that $\len_{\rho_i\circ \psi_{i*}^{-1}}(\partial Y_{1,i})\longrightarrow 0$, and hence in particular, we have $\len_{\rho_i\circ \psi_{i*}^{-1}}(\partial_1 A)\longrightarrow 0$.

Next suppose that $Y_{1,i}$ eventually lies in $X$. Taking a subsequence, we can assume that all the surfaces $Y_{1,i}$ lie in the same component $\Xi$ of $X$. Since $Y_{1,i}$ is not an annulus, $\Xi$ must be an $I$-bundle, which has a form of $\Xi= F \times I$.
By (\ref{I-bundle}), there is another surface $Y_{3,i}\subset\partial \Xi$ such that $Y_{1,i}$ and $Y_{3,i}$ bound an $I$-bundle compatible with the $I$-bundle structure of $\Xi$, and are projected along the fibres of $\Xi= F\times I$ to the same surface $Z_i$ in $F$ and $d_{Y_{3,i}}(\mu(\psi_{i*}(m_i)),\mu)\leq R$. 
We note that by our definition of $a_1, \dots, a_q$, there is $k_0 \geq 2$ such that $a_{k_0}$ lies on $\partial Y_{3,i}$.
Then since $d_{Y_{3,i}}(\mu(\psi_{i*}(m_i)),c_{k_0,i}) \le K_1$, we have $d_{Y_{3,i}}(c_{k_0,i}, \mu) \le R+K_1$.
We shall make use of $\{c_{1,i}\}$ and $\{c_{k_0,i}\}$ to apply  \cite[Theorem B]{minsky-gt} as before. 
Since they do not lie on the same surface, we first need to project them to $F$. This leads to the following claim:

\begin{claim}
There are $K>0$ and two sequences of simple closed curves $\{d_{1,i}\}$ and $\{d_{k_0,i}\}$ on $F$ such that $\len_{\rho_i \circ \psi_{i*}^{-1}}(d_{k,i})\leq K$ for all $i$ and $k=1,k_0$, and $d_{Z_i}(d_{1,i}, d_{k_0,i})\longrightarrow\infty$.
\end{claim}
\begin{proof}
Let $k$ be either $1$ or $k_0$.
If $c_{k,i}$ is contained in $\Xi$ for sufficiently large $i$, then we let $d_{k,i}$ be the projection of $c_{k,i}$ to $F$. 
We also note that $\len_{\rho_i \circ \psi_{i*}^{-1}}(d_{k,i}) \leq L$ then.

Suppose that this is not the case.
We let $S$ be the component of $\partial M$ containing $c_{k,i}$. 
Following \cite[page 138]{minsky-gt} we extend the multicurve $B:=\Fr (\Xi\cap S)$ to a complete geodesic lamination $\lambda$ by performing Dehn twists   around $B$ infinitely many times to $c_{k,i}$ and adding finitely many isolated leaves spiralling around $B$. 
There is a unique pleated surface $h_{k,i}\colon S\to\hthree/\rho_i(\pi_1(S))$ realising $\lambda$ which induces $\rho_i \circ \psi_{i*}^{-1}$ between the fundamental groups. 
Let $R_\lambda$ be the  $\eps$-thick part of $S$ with respect to the hyperbolic metric induced by $h_{k,i}$.
By the efficiency of pleated surfaces (\cite[Theorem 3.3]{Th2}, \cite[Theorem 3.5]{minsky-gt}), there is a constant $K_2$ such that $\len_{h_{k,i}}(c_{k,i}\cap R_\lambda)\leq L+K_2 i(c_{k,i},B)$ (the relation between the alternation and intersection numbers comes from (4.3) in \cite{minsky-gt}). 
 In particular, there is an arc $\kappa_{k,i}$ in $c_{k,i}\cap (\Xi\cap S)\cap R_\lambda$ intersecting $Y_{k,i}$ and having length at most $L+K_2$. 
By \cref{broken window}, the length of each component of $B$ on $h_{k,i}$ is bounded by a constant $L'$ independent of $i$.
By joining one or two copies of $\kappa_{k,i}$ (depending on whether $\kappa_{k,i}$ intersects one or two components of $B \cup \Fr R_\lambda$) with arcs on $B \cup \Fr R_\lambda$, we can construct in $S\cap \Xi$ a simple closed curve $d_{k,i}$ such that $\len_{h_{k,i}}(d_{k,i})\leq 2(L+K_2+L'+\eps)$. 
Furthermore,  this construction implies that there is a constant $K_3$ such that $d_Y(d_{k,i},c_{k,i})\leq K_3$ for any incompressible subsurface $Y\subset S\cap \Xi$ intersecting both $d_{k,i}$ and $c_{k,i}$, and in particular for $Y=Y_{k,i}$. We use the same symbol $d_{k,i}$ to denote the projection of $d_{k,i}$ on $F$ along the fibres of $\Xi=F\times I$.

Thus we have $\len_{\rho_i\circ \psi_{i*}^{-1}}(d_{k,i})\leq 2(L+K_2+L'+\eps), $ and $d_{Z_i}(d_{1,i}, d_{k_0,i})\geq d_{Y_{1,j}}(c_{1,i},\mu)-d_{Y_{k_0,i}}(c_{k_0,i},\mu)-2K_3 \geq d_{Y_{1,j}}(c_{1,i},\mu)-R-K_1-2K_3 \longrightarrow\infty$.
\end{proof}

\noindent
{\em Continuation of Proof of \cref{lma: goes to 0}.}
Set $\vartheta_i=\rho_i\circ \psi_{i*}^{-1} \circ I_*:\pi_1(S)\to\PSL$ where $I_*:\pi_1(S)\to\pi_1(M)$ is the homomorphism induced by the inclusion. 
Following \cite{minsky-gt}, we denote by $\mathcal C_0(\vartheta_i, K)$ the set of simple closed curves on $S$ whose translation lengths with respect to $\vartheta_i$ are less than or equal to $K$.
By the claim above, we see that both $d_{1,i}$ and $d_{k_0,i}$ lie in $\mathcal{C}_0(\vartheta_i, K)$ and that  $d_{Y_{1,i}}(d_{1,i},d_{k_0,i})\longrightarrow\infty$. In particular, $\mathrm{diam}_{Y_{1,i}}({\mathcal C}_0(\vartheta_i, K))\longrightarrow\infty$. It follows from \cite[Theorem B]{minsky-gt} that $\len_{\vartheta_i}(\partial Y_{1,i})\longrightarrow 0$. In particular, $\len_{\vartheta_i}(\partial_1 A)\longrightarrow 0$, and hence  $\len_{\rho_i\circ \psi_{i*}^{-1}}(\partial_1 A)\longrightarrow 0$. 
\end{proof}

This also concludes the proof of Lemma \ref{lemma:diffeo}.
\end{proof}

By Claim \ref{cl: same behaviour}, proving Proposition \ref{unbounded has a root} for $\{\rho_i\}$ is equivalent to proving it for $\{\rho_i\circ \psi_{i*}^{-1}\}$. Thus we may assume that $\{\rho_i\}$ satisfies the following.
{\em
\begin{enumerate}[{\rm (I)}]
	\item For any simple closed curve $c\subset\partial M$, either $\{m(m_i,c,\mu)\}$ (resp.\ $\{m(\sigma(m_i),c,\mu)\}$,) is bounded or $m(m_i,c,\mu)\longrightarrow\infty$  (resp. $m(\sigma(m_i),c,\mu)\longrightarrow \infty)$.
	\item If $A\subset M$ is an essential annulus such that $m(m_i,\partial_k A,\mu)\longrightarrow\infty\ (k=1,2)$ for both boundary components $\partial_1 A$ and $\partial_2 A$ of $A$, then $\len_{\rho_i}(\partial A_1^*)\longrightarrow 0$.
\end{enumerate}
}

\subsection{End invariants and wrapping}
\label{control}

In this subsection, we shall discuss how algebraic limits projects to geometric limits and how this is reflected in the behaviour of the end invariants.

Let us now fix the assumptions and notations which will be used in most results of this section.

\begin{setting}	\label{long setting}
We consider an orientable atoroidal compact boundary-irreducible Haken $3$-manifold $M$ without  torus boundary components,  and a sequence of representations $\rho_i\in QH(M)$ corresponding to  Ahlfors-Bers coordinates $m_i\in\teich(\partial M)$.
We have a non-contractible simple closed curve $d\subset\partial M$, and we denote by $d_1,...,d_p\subset \partial M$  simple closed curves representing  all homotopy classes of $\partial M$  on $\partial M$ which are homotopic to $d$  in $M$, with $d=d_1$. We assume that $\ell_{\rho_i}(d^*)\longrightarrow 0$.

%
%
We also assume that we have a submanifold $V_d$ of $M$ 
whose frontier consists of incompressible annuli and which has the following three  properties:

\begin{enumerate}[(i)]
\item $V_d$ contains all the curves $d_j\ (j=1,\dots , p)$, and $d_j$ is not peripheral in $V_d\cap\partial M$ for every $j=1, \dots ,p$. 
\label{d contained}
\item The restriction of $\rho_i$ to $\pi_1(V_d)$ converges to a representation $\rho_\infty\colon \pi_1(V_d) \to \pslc$. 
\item If $A\subset V_d$ is an essential annulus disjoint from $d$ with core curve $a$ which is not homotopic to $d$ in $M$, then  $\len_{\rho_i}(a)\longrightarrow 0$ if and only if $A$ is properly homotopic to the closure of a component of $\partial V_d\setminus \partial M$.
\end{enumerate}

Suppose first $p \geq 2$.
 If a component of the characteristic submanifold containing $d$ (up to isotopy) is a solid torus, then it contains all of $d_1, \dots , d_p$ up to isotopy.
We let $T$ be this characteristic solid torus in this case.
If the component is an $I$-pair, then $p=2$, and it contains $d_2$ up to isotopy.
In this case, we let $T$ be $A \times [0,1]$ such that $A\times \{0\}$ is an annular neighbourhood of $d$ whereas $A \times \{1\}$ is that of $d_2$.
Since $\Fr V_d$ consists of annuli, by the condition (\ref{d contained}) above, $T$ can be assumed to be contained in $V_d$ by moving it by an isotopy in both cases.
If $p=1$, we set $T=\emptyset$.

Given $j=1, \dots , p$, we denote by $F_j$ the component of $V_d\cap\partial M\setminus \bigcup_{k\neq j} d_k$ containing $d_j$.

The sequence of groups $\{\rho_i(\pi_1(V_d))\}$ converges geometrically to a Kleinian group $\Gamma$ containing $\rho_\infty(\pi_1(V_d))$, passing to a subsequence.
\end{setting}

In the next section, we shall construct $V_d$ having the properties above, which shows that our argument in the present section really works.

Assuming the existence of $V_d$ for the moment, we now prove that every component of $V_d\setminus T$ has a compact core which  is embedded in the geometric limit $\hyperbolic^3/\Gamma$ making use of the work of \cite{BBCM}.

\begin{lemma}	\label{embedded core for W}
In  \cref{long setting}, let $W$ be a submanifold of $V_d$ which is the closure of a component of $V_d\setminus T$. Then there is a relative compact core $C_W\subset \hthree/\rho_\infty(\pi_1(W))$ which is homeomorphic to $W$ and on which the restriction of the covering projection $\hthree/\rho_\infty(\pi_1(W))\to\hthree/\Gamma$ induced by the inclusion  is injective.
Furthermore, for the closures of two components $W_1, W_2$ of $V_d \setminus T$ (in the case when $T$ is non-empty and separates $W$), the compact cores $C_{W_1}$ and $C_{W_2}$ can be taken so that their images in $\hthree/\Gamma$ are disjoint. 
\end{lemma}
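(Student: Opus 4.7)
The plan is to invoke the embedding theorem for partial cores from \cite{BBCM}, combined with tameness and Waldhausen's rigidity theorem for Haken manifolds. Conditions (ii) and (iii) of \cref{long setting} are tailored exactly so that the hypotheses of that theorem are met for each component $W$ of $V_d\setminus T$.

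First I would identify the relevant pared structure. Since $\pi_1(W)\subset\pi_1(V_d)$ (after fixing basepoints), the algebraic limit $\rho_\infty|_{\pi_1(W)}$ exists, and the inclusions $\rho_\infty(\pi_1(W))\subset\rho_\infty(\pi_1(V_d))\subset\Gamma$ yield covering maps
$$\hthree/\rho_\infty(\pi_1(W))\longrightarrow\hthree/\rho_\infty(\pi_1(V_d))\longrightarrow\hthree/\Gamma,$$
whose composition I shall denote by $p_W$. Condition (iii) of \cref{long setting}, combined with $\len_{\rho_i}(d^*)\to 0$, implies that the essential simple closed curves in $V_d$ whose $\rho_i$-lengths tend to $0$ are (up to homotopy in $M$) precisely $d$ together with the cores of the frontier annuli $\Fr V_d$. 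Restricted to $W$, this gives a pared structure $(W,P_W)$ in which $P_W$ consists of annular neighbourhoods of the $d_j$'s lying in $W$ and of the cores of $\Fr V_d\cap W$.

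Next I would apply the BBCM partial-core embedding theorem to the pair $(W,P_W)$. The algebraic convergence $\rho_i|_{\pi_1(W)}\to\rho_\infty|_{\pi_1(W)}$, the geometric convergence $\rho_i(\pi_1(V_d))\to\Gamma$, and the absence of any pinching annulus outside $P_W$ (which is precisely what condition (iii) provides) meet the hypotheses of that theorem and produce a relative compact core $C_W\subset(\hthree/\rho_\infty(\pi_1(W)))_0$ such that $p_W|_{C_W}$ is injective. The identification $C_W\cong W$ then follows from tameness of $\hthree/\rho_\infty(\pi_1(W))$ (Agol, Calegari--Gabai) together with Waldhausen's rigidity: both $W$ and the compact tame model whose interior is $\hthree/\rho_\infty(\pi_1(W))$ are atoroidal, irreducible, boundary-irreducible Haken manifolds with isomorphic fundamental groups, so they are homeomorphic as pared manifolds.

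For the disjointness statement, note that $T$ separates $W_1$ from $W_2$ in $V_d$, so each frontier annulus of $W_j\cap T$ lies on $\partial T$ and has core homotopic in $M$ either to $d$ (in the $I$-pair case) or to the core of the characteristic solid torus $T$. In the geometric limit $\hthree/\Gamma$, $d^*$ is parabolic and determines a cusp, and the images $p_{W_j}(C_{W_j})$ approach this cusp along these frontier annuli. By pushing the corresponding boundary components of the $C_{W_j}$ deeper into the relevant Margulis tubes and cusp neighbourhoods in $\hthree/\Gamma$, one can arrange $p_{W_1}(C_{W_1})$ and $p_{W_2}(C_{W_2})$ to be disjoint. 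The main obstacle is verifying in detail that our setup matches the hypotheses of the BBCM theorem, in particular translating condition (iii) of \cref{long setting} into the statement that $\rho_\infty|_{\pi_1(W)}$ has no accidental parabolics beyond $P_W$; a secondary difficulty is the wrapping behaviour around the $d^*$-cusp when the winding number of $T$ around its core is greater than one, where one must check that the two images approach that cusp from disjoint sides.
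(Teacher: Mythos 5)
Your first step---reducing the injectivity claim to the partial-core embedding result of \cite{BBCM}---is essentially what the paper does (it cites \cite[Proposition 4.4]{BBCM}). The remaining two steps, however, diverge from the paper, and the identification $C_W\cong W$ has a genuine gap.

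Your argument that $C_W\cong W$ ``follows from tameness together with Waldhausen's rigidity'' does not go through as stated. Waldhausen's theorem concludes a homeomorphism only from a homotopy equivalence of \emph{pairs} $(C_W,\partial C_W)\to(W,\partial W)$, not from an isomorphism of fundamental groups alone. Compact Haken manifolds can have isomorphic fundamental groups without being homeomorphic: exotic homotopy equivalences, studied by Johannson, Swarup, McCullough--Miller--Swarup and others, can permute or interchange pieces of the characteristic submanifold, so a compact core of $\hthree/\rho_\infty(\pi_1(W))$ could in principle be a different manifold homotopy equivalent to $W$. This is exactly why the paper interposes \cite[Lemma 4.6]{BBCM}, which shows that $\rho_\infty(\pi_1(W))$ is a generalised web group or a degenerate group without accidental parabolics, and then invokes Anderson--Canary's \cite[Corollary C and Theorem E]{AC}. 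Those results give precisely the uniqueness of compact core needed here; one cannot bypass them by quoting tameness and rigidity in the generality you used.

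Finally, for the disjointness of $p_{W_1}(C_{W_1})$ and $p_{W_2}(C_{W_2})$, you propose pushing boundary annuli into Margulis tubes and then verifying things ``approach from disjoint sides''---and you candidly flag the wrapping case as unresolved. The paper instead obtains the disjointness directly from \cite[Proposition 4.4]{BBCM}, which already produces the cores as disjoint embedded submanifolds of $\hthree/\Gamma$; no additional pushing argument is needed. You should replace your final paragraph with an appeal to that part of the BBCM statement, and replace the Waldhausen step with the BBCM Lemma 4.6 plus Anderson--Canary argument.
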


\begin{proof}
Our conditions in \cref{long setting} imply the assumptions of \cite[Proposition 4.4]{BBCM}, and applying this proposition, we see that there is a compact submanifold of $\hthree/\Gamma$ which lifts to a compact core $C_W$ of $\hthree/\rho_\infty(\pi_1(W))$ such that the restriction of the covering projection $\hthree/\rho_\infty(\pi_1(W))\to\hthree/\Gamma$ to $C_W$ is injective. Let $\Gamma_W\subset \Gamma$ be the geometric limit of $\{\rho_i(\pi_1(W))\}$.
Then the restriction of the covering projection $\hthree/\rho_\infty(\pi_1(W))\to\hthree/\Gamma_W$ to $C_W$ must also be injective.
	
By \cite[Lemma 4.6]{BBCM}, $\rho_\infty(\pi_1(W))$ is either a generalised web group or a degenerate group without accidental parabolic elements. It follows then from \cite[Corollary C and Theorem E]{AC} that $C_W$ is homeomorphic to $W$.
The last sentence of our lemma also follows from \cite[Proposition 4.4]{BBCM}.
\end{proof}

We next show that by performing Dehn twists along  embedded annuli bounded by $d$ and  $d_j\ (j=2, \dots , p)$, we can make each $F_j$ embedded in the algebraic limit and mapped injectively in the geometric limit by the covering projection.

In the next lemma and the following, we shall use the expression \lq the {\em outward side} of a cusp'.
We say that an embedding of the surface $F_j\subset\partial V_d$ into the geometric limit $\hthree/\Gamma$ lies on the outward side of a cusp if the cusp lies on the same side of the embedding of $F_j$ as the embeddings of the components of $V_d \setminus T$ intersecting $F_j$.
Otherwise we say that the embedding of $F_j$ lies on the {\em inward side} of the cusp.

\begin{lemma}	\label{twists}
In \cref{long setting}, we denote by $D_j$ the right-hand Dehn twist along an embedded annulus bounded by $d=d_1$ and $d_j$ ($j=2, \dots , p$). 
Then for each $j$, there is a sequence $\{a_i(j)\}$ of integers with the following properties:
\begin{enumerate}[--]
	\item The sequence $\{\theta_i=\rho_i\circ {D_{j*}^{a_i(j)}}|_{\pi_1(F_j)}\}$ converges algebraically to a representation $\theta_\infty :\pi_1(F_j)\to \PSL$.	
	\item There is an embedding $h_j :F_j\to\hthree/\theta_\infty(\pi_1(F_j))$ inducing $\theta_\infty$ such that the restriction of the covering projection $\Pi_{F_j}:\hthree/\theta_\infty(\pi_1(F_j))\to\hthree/\Gamma$ to $h_j(F_j)$ is an embedding and its image $\Pi_{F_j}\circ h_j(F_j)$ lies on the outward side of the cusp corresponding to $\rho_\infty(d)=\theta_\infty(d)$ when the latter is a rank-2 cusp.
\end{enumerate}
\end{lemma}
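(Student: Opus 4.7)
The plan is to use the embedded compact cores from \cref{embedded core for W} in the geometric limit $\hthree/\Gamma$ to determine the correct Dehn twist exponents, and then apply Thurston's relative boundedness theorem to obtain algebraic convergence. The picture to keep in mind is that each Dehn twist $D_j$ rotates the geometric image of $F_j$ around the cusp of $d_1$ in $\hthree/\Gamma$, and we must choose $a_i(j)$ to compensate for the winding produced by $\rho_i$.

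First I would verify that $\rho_\infty(d_j)$ is parabolic: the annulus realising the homotopy between $d_1$ and $d_j$ in $M$ can be taken inside $V_d$ (up to isotopy it lies in $T$), so $\rho_\infty(d_j)$ is conjugate to $\rho_\infty(d_1)^{\pm 1}$ in $\rho_\infty(\pi_1(V_d))$, and the parabolicity of $\rho_\infty(d_1)$ follows from $\ell_{\rho_i}(d^*)\to 0$. Let $T_1\subset\hthree/\Gamma$ denote the maximal cusp neighbourhood whose stabiliser contains $\rho_\infty(d_1)$. Applying \cref{embedded core for W} to each component $W$ of $V_d\setminus T$ yields a compact core $C_W$ whose covering-projection image in $\hthree/\Gamma$ is embedded and disjoint from the images of cores of other components; the frontier annuli of $C_W$ meeting $T_1$ embed on $\partial T_1$ with definite slopes.

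Since $d_j$ lies in the interior of $F_j$ (by the defining property of $F_j$ as a component of $V_d\cap\partial M\setminus\bigcup_{k\ne j}d_k$ containing $d_j$), the restriction of $D_j$ to $F_j$ is the Dehn twist $\tau_{d_j}$, and $\theta_i=\rho_i|_{\pi_1(F_j)}\circ\tau_{d_j}^{a_i(j)}$. Geometrically, incrementing $a_i(j)$ by one rotates the image of a fixed transversal to $d_j$ by one full turn around $T_1$. I would choose $a_i(j)$ so that, up to bounded error, the slope of this transversal on $\partial T_1$ aligns with the slope of the frontier annulus of $C_W$, where $W$ is the component of $V_d\setminus T$ adjacent to $F_j$ on the outward side of $T_1$. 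This ensures that a fixed transversal to $d_j$ has bounded length under $\theta_i$. Combining this with $\ell_{\theta_i}(d_j)\to 0$ and the bounded length of the other boundary curves of $F_j$ (which come from frontier annuli of $V_d$ or from the curves $d_k$ with $k\ne j$, bounded by the conditions on $V_d$), yields a doubly incompressible system of curves on $F_j$; Thurston's relative boundedness theorem, as invoked in the proof of \cref{relative convergence}, then provides the algebraic convergence $\theta_i\to\theta_\infty$ up to subsequence and conjugation.

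The limit $\hthree/\theta_\infty(\pi_1(F_j))$ is a Kleinian surface group manifold with an accidental parabolic at $d_j$, which admits an embedding $h_j:F_j\to\hthree/\theta_\infty(\pi_1(F_j))$ as a compact-core or pleated surface. By the slope alignment ensured by the choice of $a_i(j)$, the composition $\Pi_{F_j}\circ h_j$ embeds $F_j$ in $\hthree/\Gamma$ alongside but disjoint from the embedded core $C_W$, on the outward side of $T_1$ when the latter has rank $2$; the injectivity of $\Pi_{F_j}$ on $h_j(F_j)$ follows from the embeddedness of $C_W$ and the structure of the geometric limit near $T_1$ provided by \cite{BBCM}.

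The main obstacle will be to verify that a single choice of $a_i(j)$ simultaneously achieves bounded transversal length (needed for algebraic convergence), injectivity of $\Pi_{F_j}$ on $h_j(F_j)$, and placement on the outward side of the rank-$2$ cusp $T_1$. This requires a careful combination of pleated-surface length estimates with the fine structure of geometric limits developed in \cite{BBCM}, in particular a precise understanding of how a surface with accidental parabolic at $d_j$ spirals around $T_1$ in the algebraic limit, and how an integral Dehn twist along $A_j$ modifies this spiral by exactly one full turn.
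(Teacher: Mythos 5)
Your first half matches the paper's proof: both start from the embedded relative compact cores of the components $W', W''$ of $V_d\setminus T$ supplied by \cref{embedded core for W}, assemble from them an embedding $g\colon F_j\to\hthree/\rho_\infty(\pi_1(F_j))$ whose image meets the thin part of $d$ along $F_j\cap T$, and then split into the rank-$1$ case (where $a_i(j)=0$ works) and the rank-$2$ case. The divergence, and the gap, is in how the integers $a_i(j)$ are produced in the rank-$2$ case. What you call ``the main obstacle'' --- that a single choice of $a_i(j)$ simultaneously gives bounded transversal length, injectivity of $\Pi_{F_j}$ on $h_j(F_j)$, and placement on the outward side of the torus cusp --- is not a technical verification to be deferred; it is the entire content of the lemma, and your proposal does not prove it. The paper does not argue via slope alignment on $\partial T_1$ followed by Thurston's relative boundedness theorem. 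It invokes the wrapping lemma \cite[Lemma 3.1]{BBCL}, which does two things at once: it classifies maps of $F_j$ into $Z=\Pi_{F_j}\circ g(F_j\setminus T)\cup T_0$ up to homotopy by their wrapping number around $T_0$, singling out the two standard embeddings $f_0$ (outward) and $f_1$ (inward); and it produces integers $s_i$ such that $q_i^{-1}\circ f_0$ is homotopic to $q_i^{-1}\circ\Pi_{F_j}\circ g\circ D_j^{s_i}$ for the approximate isometries $q_i$ of the geometric convergence. Setting $a_i(j)=s_i$ and $h_j$ the lift of $f_0$ then yields all three conclusions simultaneously. Your heuristic that one application of $D_j$ changes the wrapping by exactly one turn is precisely the assertion of that lemma, and without it (or an equivalent argument) your proof is circular at its key step.

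Two further points would need attention even if you supplied the wrapping analysis. First, your route to algebraic convergence via a doubly incompressible curve system and Thurston's relative boundedness theorem only gives convergence of a subsequence up to conjugation, whereas the statement requires $\theta_\infty(d)=\rho_\infty(d)$ as elements of $\Gamma$; you would need to pin down the conjugating elements using the already-established convergence of $\rho_i|_{\pi_1(V_d)}$. Second, your claim that injectivity of $\Pi_{F_j}$ on $h_j(F_j)$ ``follows from the embeddedness of $C_W$ and the structure of the geometric limit near $T_1$'' is exactly the assertion that the unwrapped map is homotopic to the standard embedding $f_0$ rather than to some $f_k$ with $k\neq 0$, so it cannot be taken for granted independently of the choice of $a_i(j)$.
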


\begin{proof}
This is a relative version of \cite[Lemma 4.5]{BBCL}.

Let $W'$ and $W''$ be the components of $V_d\setminus T$ intersecting $F_j$ (we set $W'=W''$ if there is only one such component), and set $F_j'=F_j\cap W'$ and $F_j''=F_j\cap W''$. By Lemma \ref{embedded core for W}, there are compact cores $C_{W'}\subset \hthree/\rho_\infty(\pi_1(W'))$ and $C_{W''}\subset \hthree/\rho_\infty(\pi_1(W''))$, homeomorphic to $W'$ and $W''$ respectively, on which the restrictions of the covering projections to $\hthree/\Gamma$ are injective. 
The inclusions induce embeddings $f':F_j'\hookrightarrow\partial C_{W'}$ and $f'':F_j''\hookrightarrow\partial C_{W''}$ which lift to embeddings $g':F_j'\hookrightarrow \hthree/\rho_\infty(\pi_1(F_j))$ and $g'':F_j''\hookrightarrow \hthree/\rho_\infty(\pi_1(F_j))$. 
The restrictions of the covering projection $\Pi_{F_j}:\hthree/\rho_\infty(\pi_1(F_j))\to \hthree/\Gamma$ to $g'(F_j\cap W')$ and to $g''(F_j\cap W'')$ are embeddings. 

If $ T$ does not separate $F_j$, we set $\check g=g'=g''$, otherwise, we put $g'$ and $g''$ together to get an embedding $\check g:F_j\setminus T\to \hthree/\rho_\infty(\pi_1(F_j))$.  
Moving $C_{W'}, C_{W''}$, $f'$ and $f''$ by isotopies, we may assume that they send the boundary of $F_j\setminus T$ into the $\eps$-thin part. Then for an appropriate choice of $\eps$, the map $\check g$ sends the boundary of $F_j\setminus T$ to the boundary of the $\eps_1$-thin part of $ \hthree/\rho_\infty(\pi_1(F_j))$, where $\eps_1$ is smaller than the three-dimensional Margulis constant. 
It is then easy to extend $\check g$ to an embedding $g:F_j\to \hthree/\rho_\infty(\pi_1(F_j))$ such that $g(T\cap F_j)$ lies on the boundary of the $\eps_2$-thin part with $\eps_2\leq\eps_1$. By \cref{embedded core for W} and by our construction, the restriction of $\Pi_{F_j}\circ g$ to $F_j\setminus T$, which is $\Pi_{F_j} \circ \hat g$, is an embedding and with an appropriate choice of $\eps$, the composition $\Pi_{F_j}\circ g$ maps $F_j\cap T$ to the boundary of the $\eps_0$-thin part of $\hthree/\Gamma$.	

	If $\rho_\infty(d)$ belongs to a rank-$1$ maximal parabolic subgroup of $\Gamma$, then it is easy to change $g$ on $F_j\cap T$ so that $\Pi_{F_j}\circ g$ is an embedding. In this case, we simply take $a_i$ to be $0$.

Otherwise, $\rho_\infty(d)$ belongs to a rank-$2$ maximal parabolic subgroup of $\Gamma$. 
We denote by $T_0$ the boundary of the corresponding torus cusp-neighbourhood in $\hthree/\Gamma$, i.e. the boundary of the corresponding component of the $\eps_2$-thin part. 
Let $Z$ be the union of $\Pi_{F_j}\circ g(F_j\setminus T)$ and $T_0$. Then $\Pi_{F_j}\circ g(F_j)$ is contained in $Z$ by our way of extending $\check g$ to $g$ as described above. 
As is explained in \cite[Lemma 3.1]{BBCL}, $\Pi_{F_j}\circ g$ is homotopic to a standard map $f_k$ wrapping  $k$ times  around $T_0$ for some $k\in\integers$, and there are two standard embeddings $f_0,f_1:F_j\to Z$ such that $f_0(F_j)$ lies on the outward side of the cusp associated with $d$  and $f_1(F_j)$ lies on its inward side, both without wrapping around $T_0$.
	
	Let $\{q_i \colon B_{r_i}(\hthree/\rho_i(\pi_1(M)), x_i) \to B_{K_ir_i}(\hthree/\Gamma, x_\infty) \}$ be a sequence of $K_i$-bi-Lipschitz approximate isometry on the $r_i$-ball with $r_i \longrightarrow \infty, K_i\longrightarrow 1$ given by the geometric convergence as explained in \cref{geom limit}. By \cite[Lemma 3.1]{BBCL}, there is $s_i\in\integers$ such that $q_i^{-1}\circ f_0$ is homotopic to $q_i^{-1}\circ\Pi_{F_i} \circ  g\circ D_j^{s_i}$. The conclusion follows, taking $a_i(j)=s_i$ and setting $h_j$ to be the lift of $f_0$ to $\hyperbolic^3/\theta_\infty(\pi_1(F_j))$.
\end{proof}

Next we study how the embedding of a compact core in the geometric limit as above affects the end invariants. 

\begin{lemma}	\label{embedded core => bounded}
In  \cref{long setting}, for each $j=1, \dots , p$, suppose that there is an embedding $h_j:F_j\to\hthree/\rho_\infty(\pi_1(F_j))$ inducing $\rho_{\infty}|_{\pi_1(F_j)}$ such that the restriction of the covering projection $\Pi_{F_j}:\hthree/\rho_\infty(\pi_1(F_j))\to\hthree/\Gamma$ to $h_j(F_j)$ is an embedding.

If $\Pi_{F_j}(h_j(F_j))$ lies on the outward side of the cusp associated with $\rho_\infty(d)\in \rho_\infty(\pi_1(M))\subset\Gamma$, then $\{m(m_i,d_j,\mu)\}$ is bounded whereas $m(\sigma(m_i),d_j,\mu)\longrightarrow\infty$.
If $\Pi_{F_j}(h_j(F_j))$ lies on the inward side of the cusp associated with $\rho_\infty(d)$ then $\{m(\sigma(m_i),d_j,\mu)\}$ is bounded whereas $m(m_i,d_j,\mu)\longrightarrow\infty$.
\end{lemma}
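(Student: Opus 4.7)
The plan is to analyse the quasi-Fuchsian covering $\hthree/\theta_i(\pi_1(F_j))\to \hthree/\rho_i(\pi_1(M))$ associated with $\pi_1(F_j)$, where $\theta_i=\rho_i|_{\pi_1(F_j)}$. By the definition of the skinning map, its Ahlfors--Bers coordinates are $(m_i|_{F_j},\sigma(m_i)|_{F_j})$: the first factor is the conformal structure inherited from $\partial M$, while the second factor lives on the ``new'' end of the cover that does not exist inside $M$. In particular, the components of $V_d\setminus T$ adjacent to $F_j$ lift to a region on the $\sigma(m_i)|_{F_j}$-side of the embedded copy of $F_j\subset\partial M$. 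Since $\rho_i|_{\pi_1(V_d)}$ converges algebraically to $\rho_\infty$ and $\ell_{\rho_i}(d^*)\to 0$ with $d\simeq d_j$ in $M$, the element $\theta_\infty(d_j)$ is parabolic, producing a rank-$1$ cusp in $\hthree/\theta_\infty(\pi_1(F_j))$ that projects to the cusp of $\rho_\infty(d)$ in $\hthree/\Gamma$.

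The crux is to translate the position of $h_j(F_j)$ relative to this cusp into the asymptotic behaviour of the two end-invariant lengths of $d_j$. By the definition preceding \cref{twists}, ``outward'' means that the cusp of $\rho_\infty(d)$ lies on the $\sigma(m_i)|_{F_j}$-side of $h_j(F_j)$; ``inward'' is the opposite. I would pull $h_j$ back through the $K_i$-bi-Lipschitz approximate isometries of the geometric convergence (as in the proof of \cref{twists}) to obtain bounded-geometry embeddings $h_j^i\colon F_j\to \hthree/\theta_i(\pi_1(F_j))$ converging to $h_j$. These embedded surfaces act as barriers: the Margulis tube around the short geodesic $d_j^*$ in $\hthree/\theta_i$ cannot cross $h_j^i(F_j)$ for large $i$, so it must develop toward the end lying on the same side of $h_j^i(F_j)$ as the cusp. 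By Minsky's product-regions theorem, the tube develops toward a given end if and only if the corresponding end-invariant has $\ell(d_j)\to 0$ while the other keeps $\ell(d_j)$ bounded below. In the outward case this yields $\ell_{\sigma(m_i)|_{F_j}}(d_j)\to 0$, hence $m(\sigma(m_i),d_j,\mu)\geq 1/\ell_{\sigma(m_i)|_{F_j}}(d_j)\to\infty$, together with $\ell_{m_i|_{F_j}}(d_j)$ bounded below; the inward case is symmetric.

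To conclude boundedness of $m(m_i,d_j,\mu)$ in the outward case (and symmetrically $m(\sigma(m_i),d_j,\mu)$ in the inward case), it remains to show that $d_Y(m_i|_{F_j},\mu)$ is bounded for every incompressible $Y\subset F_j$ with $d_j\subset\partial Y$. The $m_i$-end is geometrically finite in the algebraic limit, since a compact core of the corresponding region embeds in $\hthree/\Gamma$ by \cref{embedded core for W} together with the bounded-geometry surfaces $h_j^i$. Hence $m_i|_{F_j}$ cannot develop pinched curves on the $m_i$-side beyond $d_j$ itself, since a divergent projection $d_Y(m_i|_{F_j},\mu)\to\infty$ would, by \cite[Theorem B]{minsky-gt}, force a second curve on $\partial Y$ to become short in $\hthree/\theta_i$, contradicting the barrier argument just used.

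The main obstacle is making the barrier step precise: verifying that the bounded-geometry embedded surfaces $h_j^i(F_j)$ genuinely obstruct the Margulis tube of $d_j^*$ from crossing to the opposite side. This rests on the uniform geometry of $h_j^i(F_j)$ inherited from the compactness of $\Pi_{F_j}\circ h_j(F_j)$ in $\hthree/\Gamma$ and from the approximate isometries $q_i$, combined with the standard fact that a Margulis tube crossing a surface of bounded area and geometry would have to contain the surface in a collar neighbourhood, contradicting embeddedness. This step is the analogue in our setting of \cite[Lemma 3.1]{BBCL}.
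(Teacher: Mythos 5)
The central step of your argument rests on a false dichotomy. You claim that, because the Margulis tube of $d_j^*$ cannot cross the barrier surface, ``the corresponding end-invariant has $\ell(d_j)\to 0$ while the other keeps $\ell(d_j)$ bounded below'', attributing this to Minsky's product-regions theorem. That theorem is a statement about the Teichm\"uller metric on thin parts of Teichm\"uller space and says nothing about Margulis tubes in hyperbolic $3$-manifolds; more importantly, the dichotomy is false: a curve can be arbitrarily short in a quasi-Fuchsian manifold while its length is bounded below in \emph{both} conformal end invariants, namely when some subsurface projection $d_Y(\mu(m_i),\mu(\sigma(m_i)))$ with $d_j\subset\partial Y$ is large (this third possibility is exactly why the invariant $m$ is defined as a maximum of a projection term and a reciprocal-length term). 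Consequently the conclusion $\ell_{\sigma(m_i)|_{F_j}}(d_j)\to 0$ you extract in the outward case is both unjustified and stronger than what the lemma asserts, which is only $m(\sigma(m_i),d_j,\mu)\to\infty$. The paper's proof instead takes a transversal curve $c\subset F_j$ meeting $d_j$, notes that $\psi_i^{-1}(c^*)$ is a bounded-length quasi-geodesic lying on a definite side of the Margulis tube of $\rho_i(d)$ in the cover $\hthree/\rho_i(\pi_1(S_j))$, and then invokes \cite[Theorem 1.3]{BBCM-gt}, which converts precisely this positional information into the bound $d_Y(c,\mu(m_i))\le D$ for \emph{all} $Y\subset S_j$ with $d_j\subset\Fr Y$; a separate thin-part argument gives the lower bound on $\len_{m_i}(d_j)$, and the divergence of $m(\sigma(m_i),d_j,\mu)$ then follows from Minsky's Short Curve Theorem, not from pinching of $d_j$ on the $\sigma(m_i)$ side.

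Your argument for the boundedness of the subsurface projections has further gaps: (i) the supremum in the definition of $m$ runs over all incompressible $Y$ in the component $S_j$ of $\partial M$ with $d_j\subset\partial Y$, not only over $Y\subset F_j$; (ii) geometric finiteness of the $m_i$-end in the algebraic limit does not follow from the embeddedness of a compact core in the geometric limit --- by \cite[Lemma 4.6]{BBCM} the groups $\rho_\infty(\pi_1(W))$ may well be degenerate; and (iii) even granting that a divergent projection forces $\ell_{\rho_i}(\partial Y)\to 0$ via \cite[Theorem B]{minsky-gt}, you do not explain why another component of $\partial Y$ becoming short contradicts your barrier argument (which concerned only $d_j$), and when $Y$ is an annulus with core $d_j$ there is no second curve at all. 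The missing ingredient throughout is \cite[Theorem 1.3]{BBCM-gt}: it is the tool that passes from ``a bounded-length transversal geodesic lies on the outward side of the tube'' to ``all subsurface projections of $\mu(m_i)$ at $d_j$ are bounded'', and without it the proof does not close.
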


\begin{proof}
Suppose that $\Pi_{F_j}(h_j(F_j))$ lies on the outward side of the cusp associated with $\rho_\infty(d)\in \rho_\infty(\pi_1(M))\subset\Gamma$.
Let $c\subset F_j$ be a simple closed curve intersecting $d_j$ essentially, $c^*$ the closed geodesic homotopic to $\Pi_{F_j}(h_j(c))$, and denote by $\psi_i\colon B_{K_i r_i}(\hthree/\rho_i(\pi_1(M)), x_i) \to B_{r_i}(\hthree/\Gamma, x_\infty)$ an approximate isometry  associated with the geometric convergence of $\{\rho_i(\pi_1(M))\}$ to $\Gamma$ as explained in \cref{geom limit}. 
For $i$ large enough, $\psi_i^{-1}(c^*)$ is a quasi-geodesic lying  outside  the thin part. on the same side as $F_j$ of the Margulis tube associated with $\rho_i(d)$. 
Let $S_j$ be the component of $\partial M$ containing $F_j$. 
In the covering $\hthree/\rho_i(\pi_1(S_j))$ of $\hthree/\rho_i(\pi_1(M))$, the closed geodesic homotopic to $\rho_i(c)$ lies above the Margulis tube associated with $\rho_i(d)$. 
Therefore, by \cite[Theorem 1.3]{BBCM-gt} there is a constant $D$ such that $d_Y(c,\mu(m_i))\leq D$ for any surface $Y\subset S_j$ with $d_j\subset \Fr Y$. 
Thus for any full marking $\mu$, there is $D'$ such that $d_Y(\mu,\mu(m_i))\leq D'$ for any surface $Y\subset S$ with $d_j\subset \Fr Y$.

To conclude that $\{m(m_i,d_j,\mu)\}$ is bounded, it remains to show that $\len_{m_i}(d_j)$ is bounded away from $0$. 
Assume the contrary, that $\len_{m_i}(d_j)\longrightarrow 0$ after passing to a subsequence.
Then, there is an annulus joining the closed geodesic $d^*_j\subset\hthree/\rho_i(\pi_1(S))$ representing $\rho_i(d)$ with $d_j^+\subset\partial C(\hthree/\rho_i(\pi_1(S)))$  corresponding to $d_j$, which lies entirely in the $\eps_i$-thin part with $\eps_i\longrightarrow 0$. 
Since $ \psi_i^{-1}(c^*)$ has bounded length, it cannot intersect such an annulus, whereas $\psi_i^{-1}(c^*)$ lies in a uniformly bounded neighbourhood of the convex core for large $i$. Since $c^*$ and $\Pi_{F_j}(h_j(F_j))$ lie on the same side of the cusp associated with $\rho_\infty(d)$, this contradicts the assumption that $\Pi_{F_j}(h_j(F_j))$ lies on the outward side of the cusp associated with $\rho_\infty(d)\in\Gamma$.

Since $\len_{\rho_i}(d)\longrightarrow 0$ and $\{m(m_i, d_j, \mu)\}$ is bounded, it follows from \cite[Short Curve Theorem]{Mi} that $m(\sigma(m_i),d_j,\mu)\longrightarrow\infty$.
%

A quite similar argument works also when $\Pi_{F_j}(h_j(F_j))$ lies on the inward side of the cusp associated with $\rho_\infty(d)\in\Gamma$.
\end{proof}

\begin{corollary}	\label{embedded core surface}
In \cref{long setting}, assume that $p \geq 2$, and consider $j\leq p$ such that $\{m(m_i,d_j,\mu)\}$ is bounded.
Then there is an embedding $h:F_j\to\hthree/\rho_\infty(\pi_1(F_j))$ inducing $\rho_\infty$ such that the restriction of the covering projection $\Pi_{F_j}:\hthree/\rho_\infty(\pi_1(F_j))\to\hthree/\Gamma$ to $h(F_j)$ is an embedding whose image lies on the outward side of the cusp corresponding to $\rho_\infty(d)$.
\end{corollary}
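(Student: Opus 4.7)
The plan is to combine \cref{twists} with \cref{embedded core => bounded}. I first apply \cref{twists} to obtain, after passing to a subsequence, a sequence of integers $\{a_i(j)\}$ and an embedding $h_j\colon F_j\to\hthree/\theta_\infty(\pi_1(F_j))$ inducing $\theta_\infty:=\lim \rho_i\circ (D_j)_*^{a_i(j)}|_{\pi_1(F_j)}$ for which $\Pi_{F_j}(h_j(F_j))$ is embedded in $\hthree/\Gamma$ and lies on the outward side of the cusp corresponding to $\rho_\infty(d)$ whenever this cusp has rank $2$ in $\Gamma$. Since $D_j$ restricts to a self-homeomorphism of $F_j$, the automorphism $(D_j)_*$ preserves $\pi_1(F_j)$, hence $\theta_i(\pi_1(F_j))=\rho_i(\pi_1(F_j))$ for every $i$, and passing to algebraic limits $\theta_\infty(\pi_1(F_j))=\rho_\infty(\pi_1(F_j))$ as subgroups of $\PSL$. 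In particular $h_j$ is already a topological embedding into $\hthree/\rho_\infty(\pi_1(F_j))$, and only its induced homomorphism on $\pi_1$ needs to be adjusted by a Dehn twist power.

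The key step, and the main obstacle, will be to show that $\{a_i(j)\}$ is bounded. In the rank-$1$ case this is immediate, as \cref{twists} gives $a_i(j)=0$. In the rank-$2$ case I would re-run the argument of \cref{embedded core => bounded} with $\theta_i$ in place of $\rho_i$: the manifolds $\hthree/\theta_i(\pi_1(M))=\hthree/\rho_i(\pi_1(M))$ coincide literally and share the same geometric limit $\Gamma$, so the approximate isometries used in that proof apply unchanged. Fixing a simple closed curve $c\subset F_j$ intersecting $d_j$ essentially, the closed geodesic $\Pi_{F_j}(h_j(c))$ in $\hthree/\Gamma$ has bounded length, and its pull-back via the approximate isometry is a bounded-length closed geodesic in $\hthree/\rho_i(\pi_1(M))$ whose image in the cover $\hthree/\rho_i(\pi_1(S_j))$ lies on the outward side of the Margulis tube for $\rho_i(d)$, and which under $\rho_i$ represents the conjugacy class $D_j^{a_i(j)}(c)$ (because $h_j$ induces $\theta_\infty$). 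Applying \cite[Theorem~1.3]{BBCM-gt} in the cover exactly as in the proof of \cref{embedded core => bounded} then yields that the annular projection $d_{A_j}(D_j^{a_i(j)}(c),\mu(m_i))$ is uniformly bounded. Since a Dehn twist along $d_j$ shifts $\pi_{A_j}$ by $\pm 1$, this projection equals $d_{A_j}(c,\mu(m_i))\pm a_i(j)$ up to $O(1)$, and the hypothesis that $\{m(m_i,d_j,\mu)\}$ is bounded controls $d_{A_j}(\mu(m_i),\mu)$ and hence, as $c$ and $\mu$ are fixed, also $d_{A_j}(c,\mu(m_i))$. Therefore $\{a_i(j)\}$ is bounded.

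After passing to a further subsequence, assume $a_i(j)=a$ is constant, so that $\theta_\infty=\rho_\infty\circ(D_j^a)_*|_{\pi_1(F_j)}$; then $h:=h_j\circ D_j^{-a}|_{F_j}$ is an embedding of $F_j$ into $\hthree/\rho_\infty(\pi_1(F_j))$ that induces $\rho_\infty$, and its image under $\Pi_{F_j}$ coincides with $\Pi_{F_j}(h_j(F_j))$ and so lies on the outward side. In the rank-$1$ case I would take $h=h_j$ directly; since $h$ then induces $\rho_\infty$, feeding it into \cref{embedded core => bounded} settles the side, because the inward alternative would force $m(m_i,d_j,\mu)\to\infty$, contradicting the hypothesis.
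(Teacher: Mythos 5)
Your proposal is correct and follows essentially the same route as the paper's proof: apply \cref{twists}, deduce boundedness of $\{a_i(j)\}$ in the rank-$2$ case by comparing the (bounded) annular projection data of the twisted sequence with the hypothesis that $\{m(m_i,d_j,\mu)\}$ is bounded — which is exactly how the paper uses \cref{embedded core => bounded} applied to $\{D_{j*}^{a_i}m_i\}$ — and then undo the now-bounded twisting, settling the rank-$1$ case and the choice of side via \cref{embedded core => bounded} as well.
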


\begin{proof}
As can be seen in the proof of Lemma \ref{twists}, if $\rho_\infty(d)$ belongs to a rank-$1$ maximal parabolic subgroup of $\Gamma$, then $a_i(j)=0$ for any $i$ and $\theta_\infty=\rho_\infty$.
Therefore, our claim of this corollary follows immediately from \cref{twists,embedded core => bounded}.

Otherwise, $\rho_\infty(d)$ belongs to a rank-$2$ maximal parabolic subgroup of $\Gamma$. By \cref{twists}, 
there is a sequence of integers $\{a_i(j)\}$ and an embedding $h_j \colon F_j \to \hthree/\theta_\infty(\pi_1(F_j))$ inducing $\theta_\infty$ between the fundamental groups such that the restriction of the covering projection $\Pi_{F_j}:\hthree/\theta_\infty(\pi_1(F_j))\to\hthree/\Gamma$ to $h_j(F_j)$ is an embedding and its image $\Pi_{F_j}\circ h_j(F_j)$ lies on the outward side of the cusp corresponding to $\theta_\infty(d)=\rho_\infty(d)$.
By \cref{embedded core => bounded}, $\{m(D_{j*}^{a_i}m_i,d_j,\mu)\}$ is bounded. 
Since $\{m(m_i,d_j,\mu)\}$ is bounded by assumption, this is possible only when $\{a_i(j)\}$ is bounded. Then we may take $a_i(j)=0$ for any $i$ in \cref{twists} so that $\theta_\infty=\rho_\infty$, and the conclusion follows.
\end{proof}

We now put these results together to get the result which we shall use to prove Proposition \ref{unbounded has a root}.

\begin{lemma}	\label{cusp on one side}
In  \cref{long setting}, suppose that $\{m(m_i,d_j,\mu)\}$ is bounded for $j\neq 1$.
Then, 
%
there is a relative compact core for $\hthree/\rho_\infty(\pi_1(V_d))$ homeomorphic to $V_d$ on which the restriction of the covering projection $\Pi_d:\hthree/\rho_\infty(\pi_1(V_d))\to\hthree/\Gamma$ is injective. Furthermore, a cusp neighbourhood corresponding to $\rho_\infty(d)$ intersects the compact core in an annular neighbourhood of $d_1$. 
\end{lemma}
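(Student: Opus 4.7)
The plan is to split the argument into two cases depending on $p$. When $p=1$, the lemma is essentially immediate: $T=\emptyset$, so $V_d\setminus T=V_d$, and applying \cref{embedded core for W} directly to $W=V_d$ produces a relative compact core of $\hthree/\rho_\infty(\pi_1(V_d))$ homeomorphic to $V_d$ on which $\Pi_d$ is injective. Since $d_1$ is the unique parabolic boundary curve of $V_d\cap\partial M$ in the cyclic parabolic subgroup containing $\rho_\infty(d)$, the standard structure of the relative compact core of \cite{Mc} ensures that the corresponding cusp neighbourhood meets the core along an annular neighbourhood of $d_1$.

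For the substantive case $p\geq 2$, I would first apply \cref{embedded core for W} to each component $W_k$ of $V_d\setminus T$, yielding pairwise-disjoint embedded compact cores $C_{W_k}\subset\hthree/\Gamma$, each homeomorphic to $W_k$. Next, for every $j=2,\dots,p$, the assumption that $\{m(m_i,d_j,\mu)\}$ is bounded lets me invoke \cref{embedded core surface} to produce an embedding $h_j\colon F_j\to\hthree/\rho_\infty(\pi_1(F_j))$ whose image under $\Pi_{F_j}$ is an embedded surface in $\hthree/\Gamma$ lying on the \emph{outward} side of the cusp of $\rho_\infty(d)$. Because the outward-side condition is defined relative to the adjacent components of $V_d\setminus T$, all of these embeddings consistently place the $\rho_\infty(d)$-cusp on the same side as the $C_{W_k}$'s.

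The compact core of $V_d$ is then assembled by taking the $C_{W_k}$'s together with a collar of the cusp neighbourhood $\tilde U$ of $\rho_\infty(d)$ in $\hthree/\rho_\infty(\pi_1(V_d))$ bounded on the outer side by the $h_j(F_j)$'s and the appropriate parts of $\partial M$; this collar is homeomorphic to $T$ (a solid torus or a product $A\times I$), and its attachment to the $C_{W_k}$'s reconstructs a copy of $V_d$. For embedded-ness in $\hthree/\Gamma$: the $C_{W_k}$'s embed disjointly by \cref{embedded core for W}, the $h_j(F_j)$'s project injectively by construction, and the cusp collar projects injectively precisely because the outward-side condition forces it to correspond to a single fundamental domain for $\Pi_d$ restricted to $\tilde U$. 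The cusp-intersection assertion then follows: for each $j\geq 2$, the surface $\Pi_{F_j}\circ h_j(F_j)$ lies on the outward side, which pushes the annular neighbourhood of $d_j$ into the cusp rather than onto the boundary of the core, leaving only the annular neighbourhood of $d_1$ as the common boundary between the core and the cusp.

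The main obstacle is the case where the cusp of $\rho_\infty(d)$ in $\hthree/\Gamma$ is rank $2$ while the one in $\hthree/\rho_\infty(\pi_1(V_d))$ is rank $1$; then $\Pi_d$ restricted to $\tilde U$ is an infinite cyclic cover of the rank-$2$ cusp, and a naive choice of cusp collar could wrap around the cusp torus multiple times and destroy injectivity in $\hthree/\Gamma$. This is exactly what the integers $a_i(j)$ from \cref{twists} and the outward-side conclusion of \cref{embedded core surface} are designed to rule out: the boundedness of $\{m(m_i,d_j,\mu)\}$ forces $\{a_i(j)\}$ to be bounded (as in the proof of \cref{embedded core surface}), which is what guarantees that the assembled collar does not wrap and that the cusp-core intersection is the annular neighbourhood of $d_1$ rather than of some twisted curve.
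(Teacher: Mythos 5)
Your proposal follows essentially the same route as the paper: handle $p=1$ directly via \cref{embedded core for W}, and for $p\geq 2$ assemble an embedded copy of $V_d$ in $\hthree/\Gamma$ from the cores of the components of $V_d\setminus T$ (\cref{embedded core for W}) together with the surfaces $F_j$, $j\neq 1$, placed on the outward side of the cusp by \cref{embedded core surface}, so that the cusp of $\rho_\infty(d)$ can only emerge through the annular neighbourhood of $d_1$. The paper phrases the last step slightly differently — it first notes that the relative compact core must meet the cusp in a regular neighbourhood of \emph{some} $d_k$, observes that $F_k$ then lies on the inward side, and concludes $k=1$ by elimination — but the ingredients and the non-wrapping argument are the same, so the proposal is correct.
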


\begin{proof}
By Lemma \ref{embedded core for W}, for the components $W$ of $V_d\setminus T$, we have embeddings $g_W:W\to \hthree/\rho_\infty(\pi_1(V_d))$ inducing $\rho_{\infty}|_{\pi_1(W)}$, on the union of which the restriction of $\Pi_d$ is injective.

If $p=1$, then $T=\emptyset$ by definition, and hence $W=V_d$.
We can take a cusp neighbourhood corresponding to $\rho_\infty(d)$ intersecting $g_W(W)$ along an annulus in the homotopy class of $d$. Since $p=1$, such an annulus is isotopic on $\partial V_d$ to an annular neighbourhood of $d_1=d$.

Suppose that $p\geq 2$, and assume that $\{m(m_i,d_j,\mu)\}$ is bounded for every $j \neq 1$. 
Then by \cref{embedded core surface}, for every $j\neq 1$, there is an embedding $g_j:F_j\to \hthree/\rho_\infty(\pi_1(V_d))$ inducing $\rho_{\infty}|_{\pi_1(F_j)}$ on which the restriction of $\Pi_d$ is injective. Furthermore, it follows from the construction that $g_j$ and $g_W$ agree on $F_j\cap W$. Putting together the maps $g_W$ for all the components $W$ of $V_d \setminus T$ and the $g_j$ for all $j\neq 1$, we get an embedding $g:V_d\to  \hthree/\rho_\infty(\pi_1(V_d))$ inducing $\rho_{\infty}|_{\pi_1(V_d)}$ on which the restriction of $\Pi_d$ is injective.

Changing $g$ by an isotopy, we may assume that $g(V_d)$ intersects a cusp neighbourhood $C$ associated with $\rho_\infty(d)$ along an annulus $A\subset g(\partial V_d)$ which is a regular neighbourhood of $g(d_k)$ for some $k=1, \dots , p$. Then $g(F_k)$ lies on the inward side of $C$. This is possible only if $\Pi_d\circ g(F_k)$ lies on the inward side of $C$; for the restriction of $\Pi_d$ is injective on $g(V_d)$, and hence it cannot wrap around $C$.

By assumption, for every $j\neq 1$, $\{m(m_i,d_j,\mu)\}$ is bounded. It follows then from \cref{embedded core surface} that $\Pi_d(g(F_j))$ lies on the outward side of $C$ for $j\neq 1$. Hence the only possibility is that $A$ is a regular neighbourhood of $g(d_1)$.
\end{proof}

\subsection{Completion of the proof of \cref{unbounded has a root}}

\begin{proof}[Proof of Proposition \ref{unbounded has a root}]
If $M$ is an $I$-bundle, then $m(\sigma(m_i),d, \mu)=m(m_i,d_2, \mu)$ and the conclusion follows. In the other cases, we shall prove the proposition by contradiction. 
Assume that $M$ is not an $I$-bundle, that $m(\sigma(m_i),d, \mu)\longrightarrow\infty$, and that  $\{m(m_i,d_j,\mu)\}$ is bounded for every $j =2, \dots , p$.

By Lemma \ref{lemma:diffeo}, after re-marking and passing to  a subsequence, we may assume that $\{\rho_i=q(m_i)\}$ satisfies:
{\em
	\begin{enumerate}[(1) ]
		\item for any simple closed curve $c\subset\partial M$, either $\{m(m_i,c,\mu)\}$ (resp. $\{m(\sigma(m_i),c,\mu)\}$) is bounded or $m(m_i,c,\mu)\longrightarrow\infty$ (resp. $m(\sigma(m_i),c,\mu)\longrightarrow \infty$);
		\item if $A\subset M$ is an essential annulus such that $\partial A$ does not intersect $d$ (hence any of $d_j$) and $m(m_i,\partial_j A,\mu)\longrightarrow\infty$ for both boundary components $\partial_1 A$ and $\partial_2 A$ of $A$, then $\len_{\rho_i}(\partial A_1^*)\longrightarrow 0$.
	\end{enumerate}
}
We note that by \cref{lemma:diffeo}, $\{m(m_i, d_j, \mu)\}$ is bounded for every $j=2, \dots , p$ and $m(\sigma(m_i),d, \mu)\longrightarrow\infty$ even after re-marking.

Taking a further subsequence, we can also assume that for  any essential annulus $E$ of $M$, either $\len_{\rho_i}(\partial E)\longrightarrow 0$ or $\len_{\rho_i}(\partial E)$ is bounded away from $0$.
Let $\mathcal{A}=\bigcup_k A_k$ be a maximal family of pairwise disjoint non-isotopic essential annuli such that
\begin{enumerate}[{[}i{]}]
\item
 the length of the core curve of each annulus $A_k$ tends to $0$ ($\len_{\rho_i}(\partial A_k)\longrightarrow 0$ for any $k$), 
 \item $\partial \mathcal A$ does not intersect $d$, and 
 \item no component of $\mathcal A$ contains a curve homotopic to $d$. 
 \end{enumerate}
 Denote by $V_d$ the component of $M\setminus N(\mathcal{A})$ containing $d$, where $N(\mathcal{A})$ denotes a thin regular neighbourhood of $\mathcal{A}$. 
 Let $P$ be  the closure of $V_d \setminus \partial M$, which is a union of annuli. Next we shall control the geometry of $V_d$ and the length of $d$.

\begin{claim}	\label{claim:short d}
Passing to a subsequence, the restrictions $\{\rho_{i|\pi_1(V_d)}\}$ converge and $\len_{\rho_i}(d)\longrightarrow 0$.
\end{claim}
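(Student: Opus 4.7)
The plan is to establish the two conclusions of the claim separately: the subsequential convergence of $\{\rho_i|_{\pi_1(V_d)}\}$ via \cref{relative convergence}, and $\len_{\rho_i}(d)\longrightarrow 0$ via Minsky's Short Curve Theorem combined with Bers' inequality applied to the quasi-Fuchsian cover associated to the component $S\subset\partial M$ containing $d$.

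For the convergence, I would apply \cref{relative convergence} with $W=V_d$ and paring locus $P$. Hypotheses (a), (b), (d) are essentially immediate: (a) follows from the definition of $P$ as the frontier of $V_d$; (b) because each component of $P$ is parallel to some $A_k\in\mathcal{A}$ with $\len_{\rho_i}(\partial A_k)\longrightarrow 0$ by construction; and (d) because $M$ is not an $I$-bundle by the standing hypothesis of the proposition, so if $P=\emptyset$ then $V_d=M$ is not one either. The substantive check is hypothesis (c): given an essential annulus $E\subset V_d$ disjoint from $P$, some component of $\partial E$ must satisfy $\{m(m_i,\cdot,\mu)\}$ bounded. I split by cases on the homotopy classes of the boundary components of $E$ in $\partial M$. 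If some component of $\partial E$ is homotopic on $\partial M$ to $d_j$ for some $j\geq 2$, the standing assumption of the proof gives the required bound. If no component of $\partial E$ is homotopic in $M$ to any $d_j$, then $\partial E$ is disjoint from $d\cup\partial\mathcal{A}$ and no component is homotopic to $d$, so if both components had $m(m_i,\cdot,\mu)\longrightarrow\infty$, property (II) obtained after re-marking would force $\len_{\rho_i}(\partial E)\longrightarrow 0$; this would make $\mathcal{A}\cup\{E\}$ a strictly larger admissible family, contradicting the maximality of $\mathcal{A}$.

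The delicate case in the verification of (c) is when both components of $\partial E$ are homotopic in $M$ to $d=d_1$. Both components must then lie on the same component $S$ of $\partial M$, and together with a cobounding annulus $A'\subset S$ the surface $E\cup A'$ is an embedded torus in $M$. By atoroidality and the absence of torus boundary components, this torus is compressible; a compressing disk transverse to $\partial E$ would compress $d$, contradicting its essentiality, so the disk is meridional and the torus bounds a solid torus $V\subset M$. By the Jaco-Shalen-Johannson theory, $V$ is isotopic into a characteristic solid torus of $M$ containing $d$, and condition (B) of strong untwistedness then forces $\partial E$ to lie on a single component of $V\cap\partial M$, making $E$ boundary-parallel in $V_d$ and contradicting its essentiality.

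For the length statement, $\len_{\rho_i}(d)$ as computed in $\hthree/\rho_i(\pi_1(M))$ coincides with its length in the quasi-Fuchsian covering $\hthree/\rho_i(\pi_1(S))$ with end invariants $m_i|_S$ and $\sigma(m_i)$. The hypothesis $m(\sigma(m_i),d,\mu)\longrightarrow\infty$ splits, after subsequencing, into either $\len_{\sigma(m_i)}(d)\longrightarrow 0$, in which case Bers' inequality immediately gives $\len_{\rho_i}(d)\longrightarrow 0$, or $d_Y(\mu(\sigma(m_i)),\mu)\longrightarrow\infty$ for a fixed incompressible subsurface $Y\subset S$ with $d\subset\partial Y$. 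In the second case, the main obstacle is to control $\{d_Y(\mu,\mu(m_i|_S))\}$; this is achieved by combining property (I) of the re-marking with the fact that $\psi_i$ was designed to make the shortest-marking projections constant on subsurfaces of the characteristic submanifold through which $d$ passes. Once that bound is in hand, the triangle inequality yields $d_Y(\mu(m_i|_S),\mu(\sigma(m_i)))\longrightarrow\infty$, and Minsky's Short Curve Theorem applied to the quasi-Fuchsian cover delivers $\len_{\rho_i}(d)\longrightarrow 0$.
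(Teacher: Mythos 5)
Your overall strategy (apply \cref{relative convergence} to $W=V_d$ and paring locus $P$, then get the shortness of $d$ from end invariants) is the paper's, and your explicit topological treatment of an essential annulus with both boundary components isotopic to $d_1$ is more detailed than the paper's (though note that condition (B) of strong untwistedness concerns characteristic $I$-pairs, not solid tori; what you actually need is that the covering built in \cref{strongly untwisted} eliminates characteristic solid tori meeting $\partial M$ in a single annulus). The genuine problems both stem from the fact that you never split into the two cases $m(m_i,d_1,\mu)\longrightarrow\infty$ and $\{m(m_i,d_1,\mu)\}$ bounded, which (by property (I)) is the organising dichotomy of the paper's argument. Concretely, in your verification of hypothesis (c) the assertion that an essential annulus $E\subset V_d$ disjoint from $P$ with no boundary component homotopic in $M$ to $d$ has $\partial E$ disjoint from $d$ is false: a component of $\partial E$ may intersect $d$ essentially. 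Such annuli are not covered by property (II), which is only stated for annuli whose boundary misses all the $d_j$, and they cannot be adjoined to $\mathcal{A}$ because of condition [ii], so the maximality argument does not apply to them. The paper disposes of them via \cref{lemma:multicurve m unbounded} when $m(m_i,d_1,\mu)\longrightarrow\infty$ (every curve meeting $d$ then has bounded $m$), and, when $\{m(m_i,d_1,\mu)\}$ is bounded, by first proving $\len_{\rho_i}(d)\longrightarrow 0$ with the Short Curve Theorem and only then enlarging $P$ by an annular neighbourhood of $d$, which removes these annuli from hypothesis (c); in that case the order of the two conclusions is the reverse of yours.

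The second gap is in the length statement, and it occurs exactly in the case $m(m_i,d_1,\mu)\longrightarrow\infty$. There, by definition of $m$, there are subsurfaces $Y_i$ with $d\subset\partial Y_i$ and $d_{Y_i}(\mu(m_i),\mu)\longrightarrow\infty$; property (I) permits this, and the re-marking $\psi_i$ was deliberately built to normalise shortest-marking projections only on a component of each characteristic $I$-pair \emph{not} containing a curve homotopic to $d_1$ (and on all but one annulus of each solid torus). So neither tool gives the bound on $\{d_{Y_i}(\mu,\mu(m_i|_S))\}$ that your triangle inequality requires, and you cannot conclude $d_{Y_i}(m_i|_S,\sigma(m_i))\longrightarrow\infty$: both end invariants could have large projections to $Y_i$ that nearly agree. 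The paper's route in this case uses the first conclusion to prove the second: the convergence of $\{\rho_i|_{\pi_1(V_d)}\}$ yields a curve $c\subset V_d\cap\partial M$ meeting $d$ with $\{\len_{\rho_i}(c)\}$ bounded, whence $d_{Y_i}(m_i,c)\longrightarrow\infty$ and \cite[Theorem 2.5]{minsky-gt} gives $\len_{\rho_i}(d)\longrightarrow 0$. Your plan to "establish the two conclusions separately" therefore cannot be carried out as stated; your Short Curve Theorem argument is essentially the paper's, but only in the case where $\{m(m_i,d_1,\mu)\}$ is bounded.
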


\begin{proof}
 Let us first assume that $m(m_i,d_1,\mu)\longrightarrow\infty$, and verify the hypotheses of \cref{relative convergence} with $W=V_d$. The hypothesis (a) follows from the construction of $V_d$. The hypothesis (b) follows from the property (2) above. By \cref{lemma:multicurve m unbounded}, $\{m(m_i,c,\mu)\}$ is bounded for any simple closed curve $c$ intersecting $d$. This observation combined with the assumption that $\{m(m_i,d_j,\mu)\}$ is bounded for any $j=2, \dots p$, the property (2) above and the maximality of $\mathcal A$ yields the hypothesis (c). Now by \cref{relative convergence} we can take a subsequence in such a way that the restrictions $\{\rho_{i|\pi_1(V_d)}\}$ converge.

If $\len_{m_i}(d)\longrightarrow 0$, we are done. Otherwise, since we are assuming that $m(m_i, d, \mu)\longrightarrow \infty$, there is a sequence of subsurfaces $Y_i\subset\partial M$ such that $d_{Y_i}(m_i,\mu)\longrightarrow\infty$ and $d\subset\partial Y_i$. Consider a simple closed curve $c\subset V_d\cap\partial M$ intersecting $d$. Since $\{\rho_{i|\pi_1(V_d)}\}$ converges, $\{\len_{\rho_i}(c)\}$ is bounded.  Then we have $d_{Y_i}(m_i,c)\longrightarrow\infty$ (for $d_{Y_i}(m_i, \mu) \longrightarrow \infty$) and it follows from \cite[Theorem 2.5]{minsky-gt} that $\len_{\rho_i}(d)\longrightarrow 0$.

Suppose now that $\{m(m_i,d_1,\mu)\}$ is bounded.
Since $\{m(\sigma(m_i),d_1,\mu)\}\longrightarrow\infty$ by assumption, $\len_{\rho_i}(d)\longrightarrow 0$ by \cite[Short Curve Theorem]{Mi}. We add to $P$ a thin regular neighbourhood of $d$ on $\partial V_d$ and we can verify as above that the hypotheses of \cref{relative convergence} are satisfied for $(V_d,P)$.
\end{proof}

Now we are in the situation of \cref{long setting}, and we  use its notations. By Lemma \ref{cusp on one side}, $g(F_1)$ lies on the inward side of the cusp corresponding to $\rho_\infty(d)$, and $g(F_j)$ lies on the outward side for every $j=2, \dots , p$. 
Then  \cref{embedded core => bounded} implies that $\{m(\sigma(m_i),d_1,\mu)\}$ is bounded. 
This contradicts our assumption.
\end{proof}

\section{The proof of \cref{main}}	\label{conclusion}
Now we shall complete the proof of \cref{main}.
By \cref{covering,strongly untwisted}, we can assume that every $M$ is strongly untwisted.
Let $L$ be the number provided by \cref{non-extendible}, and consider a sequence $\{m_i\}$ such that $\{m_i^{L+1}=(\iota_* \circ \sigma)^{L+1} m_i\}$ has no convergent subsequence.
By \cref{no parabolics=bounded}, passing to a subsequence, there is a simple closed curve $d_{L+1}\subset\partial M$ such that $m(m_i^{L+1},d_{L+1},\mu)\longrightarrow\infty$. Then we have $m(\sigma(m_i^{L}),\iota(d_{L+1}),\mu)\longrightarrow\infty$.
By \cref{unbounded has a root}, passing to a further subsequence, there is an incompressible annulus $A_{L}$ bounded by $\iota(d_{L+1})$ and another simple closed curve $d_{L}\subset\partial M$ with $m(m_i^L,d_L,\mu)\longrightarrow\infty$.
Repeating this, we get a family of simple closed curves $\{ d_k, 0\leq k\leq L+1\}$ such that $d_{k}\cup \iota(d_{k+1})$ bounds an incompressible annulus.
This means that an annular neighbourhood of $\iota(d_{L+1})$ is $L$-time vertically extendible, contradicting \cref{non-extendible}.
This completes the proof of \cref{main}.

\bibliographystyle{acm}
 \bibliography{bounded}

\end{document}